\documentclass[11pt,reqno]{amsart}

\usepackage{amsmath,amssymb,amsthm,mathrsfs}
\usepackage{enumitem}
\usepackage{hyperref}
\usepackage{tikz-cd}
\usepackage[margin=1.15in]{geometry}

\theoremstyle{plain}
\newtheorem{theorem}{Theorem}[section]
\newtheorem{proposition}[theorem]{Proposition}

\newtheorem{corollary}[theorem]{Corollary}

\theoremstyle{definition}
\newtheorem{definition}[theorem]{Definition}
\newtheorem{example}[theorem]{Example}

\theoremstyle{remark}
\newtheorem{remark}[theorem]{Remark}
\newtheorem{question}[theorem]{Question}

\newcommand{\CC}{\mathbb{C}}
\newcommand{\RR}{\mathbb{R}}
\newcommand{\DD}{\mathbb{D}}
\newcommand{\HH}{\mathbb{C}_+}
\newcommand{\Hol}{\operatorname{Hol}}
\newcommand{\Imm}{\operatorname{Im}}
\newcommand{\Ree}{\operatorname{Re}}
\newcommand{\B}{\mathcal{B}}
\newcommand{\Bmu}{\mathcal{B}_\mu}
\newcommand{\id}{\mathrm{id}}
\newcommand{\dzbar}{\bar\partial}

\begin{document}

\title[The Burgers transform]{The Burgers transform:\\ from holomorphic functions to rigid elliptic structures}

\author{Daniel Alay\'on-Solarz}\thanks{Email: danieldaniel@gmail.com}
\date{March 2026}

\begin{abstract}
We introduce the \emph{Burgers transform} $\B$, a nonlinear bijection
between holomorphic functions $f\colon U\to\HH$,
$U\cap\RR \supseteq I$ an interval, and rigid variable
elliptic structures on the plane, defined implicitly by
$\lambda = f(y-\lambda x)$.  The output automatically satisfies the
conservative complex Burgers equation $\lambda_x+\lambda\lambda_y=0$.

Our main result is that holomorphicity of the seed~$f$ is
\emph{necessary}, not merely sufficient, for rigidity: any
$C^1$ function whose implicit solution satisfies
$\lambda_x+\lambda\lambda_y=0$ must be holomorphic.  This closes a
gap in the existing literature and identifies $\Hol(U,\HH)$ as the
maximal seed space compatible with rigidity.  The obstruction formula
$H|_{x=0} = 2i\,(\Imm f)\,f_{\bar w}$ quantifies the cost of
non-holomorphicity at the level of the initial data. We characterise the domain of~$\B$ through shock formation, its
interaction with affine automorphisms of~$\HH$, and the infinitesimal
the propagator $\mathcal{P}_f = D\mathcal{B}_f$ satisfies a Jacobian-twisted
multiplicativity that deforms the seed algebra by the density of
characteristics.  Four worked examples---affine, exponential, inverse,
and trigonometric seeds---show that the complexity class of a seed and
that of the resulting structure are generically unrelated.
\end{abstract}

\keywords{Variable elliptic structures, Burgers equation, Beltrami equation, rigid structures, $f$-analytics}

\maketitle

\tableofcontents

\section{Introduction}
\label{sec:intro}

In the theory of variable elliptic structures on the plane \cite{AS2026}, the central object is a moving imaginary unit $i(x,y)$ satisfying the structure polynomial $i^2 + \beta\,i + \alpha = 0$ with ellipticity condition $\Delta = 4\alpha - \beta^2 > 0$.  The \emph{intrinsic obstruction}
\[
  G \;:=\; i_x + i\cdot i_y
\]
measures all deviations from the constant theory.  When $G = 0$, the structure is called \emph{rigid}: the Leibniz rule holds for the associated Cauchy--Riemann operator, and a complete function theory---Cauchy--Pompeiu representation, power series, similarity principle---becomes available.

The spectral parameter $\lambda = (-\beta + i\sqrt\Delta)/2$, living in the upper half-plane $\HH$, satisfies a \emph{universal transport equation}:
\begin{equation}\label{eq:universal-transport}
  \lambda_x + \lambda\,\lambda_y \;=\; G_0 + \lambda\,G_1.
\end{equation}
Under rigidity, the right-hand side vanishes, yielding the \emph{conservative complex Burgers equation}
\begin{equation}\label{eq:conservative-burgers}
  \lambda_x + \lambda\,\lambda_y \;=\; 0.
\end{equation}
The method of characteristics gives implicit solutions of the form $\lambda = f(y - \lambda x)$.  The present paper is concerned with the systematic study of this correspondence.

Our main contributions are:
\begin{enumerate}[label=(\roman*)]
\item The formal introduction of the \emph{Burgers transform} $\B\colon \Hol(U,\HH) \to \{\text{rigid structures}\}$ (Definition~\ref{def:burgers-transform});
\item A proof that holomorphicity of $f$ is \emph{necessary}, not merely sufficient, for rigidity (Theorem~\ref{thm:holomorphicity-necessary});
\item A characterization of $f$-analytics as the maximal seed class compatible with rigidity, with the passage from $p(x)$-analytics governed by the necessity of holomorphicity (Theorem~\ref{thm:analytic-completion});
\item An explicit obstruction formula quantifying the transport cost of non-holomorphicity (Proposition~\ref{prop:obstruction-formula});
\item A characterization of the domain $\Omega_f$ in terms of shock formation (Theorem~\ref{thm:domain-characterization});
\item The propagator $\mathcal{P}_f = D\B_f$ and the Jacobian-twisted multiplicativity formula $\mathcal{P}_f[h_1 h_2] = J_f\cdot\mathcal{P}_f[h_1]\cdot\mathcal{P}_f[h_2]$, resolving the infinitesimal product problem (Theorem~\ref{thm:twisted-mult}).
\end{enumerate}

\subsection*{Relation to the monograph}
This paper is a companion to \cite{AS2026}, where the fundamental results for variable elliptic structures are developed.

\subsection*{Notation}
Throughout, $\HH = \{z \in \CC : \Imm z > 0\}$ denotes the upper half-plane, $\DD = \{z \in \CC : |z| < 1\}$ the open unit disk, and $\mathcal{C}\colon \HH \to \DD$ the Cayley map $\mathcal{C}(\lambda) = (\lambda - i)/(\lambda + i)$.  Wirtinger derivatives are $\partial_w = \tfrac{1}{2}(\partial_u - i\partial_v)$ and $\partial_{\bar w} = \tfrac{1}{2}(\partial_u + i\partial_v)$ for $w = u + iv$.

\section{The Burgers transform}
\label{sec:burgers-transform}

\begin{definition}[Burgers transform]\label{def:burgers-transform}
Let $U \subseteq \CC$ be open and connected with $U \cap \RR$ containing an interval, and let $f \in \Hol(U, \HH)$.  The \emph{Burgers transform} of $f$ is the map
\[
  \B[f]\colon \Omega_f \to \HH
\]
defined implicitly by
\begin{equation}\label{eq:implicit}
  \B[f](x,y) \;=\; f\!\bigl(y - \B[f](x,y)\cdot x\bigr),
\end{equation}
where $\Omega_f \subseteq \RR^2$ is the maximal open set on which \eqref{eq:implicit} admits a unique $C^1$ solution $\lambda(x,y)$ with $\Imm\lambda > 0$.
\end{definition}

\begin{definition}[Beltrami--Burgers transform]\label{def:beltrami-burgers}
The \emph{Beltrami--Burgers transform} is the composition
\[
  \Bmu[f] \;:=\; \mathcal{C} \circ \B[f] \;\colon\; \Omega_f \to \DD,
\]
mapping each $(x,y)$ to the Beltrami coefficient $\mu = (\lambda - i)/(\lambda + i)$ of the corresponding rigid structure.
\end{definition}

\begin{definition}[Structure map]\label{def:structure-map}
The \emph{structure map} $\mathcal{S}\colon \HH \to \{(\alpha,\beta) : 4\alpha - \beta^2 > 0\}$ is given by
\[
  \mathcal{S}(\lambda) \;=\; (|\lambda|^2,\; -2\Ree\lambda).
\]
The composition $\mathcal{S} \circ \B[f]$ produces the variable elliptic structure $(\alpha,\beta)$ on $\Omega_f$.
\end{definition}

\subsection{Existence and uniqueness}

The implicit function theorem guarantees local solvability of \eqref{eq:implicit}.

\begin{proposition}\label{prop:local-existence}
Let $f \in \Hol(U,\HH)$ and let $y_0 \in \RR$ with $f(y_0) \in \HH$.  Then there exists an open neighbourhood $\Omega$ of $(0, y_0)$ in $\RR^2$ on which \eqref{eq:implicit} has a unique $C^1$ solution $\lambda\colon \Omega \to \HH$.
\end{proposition}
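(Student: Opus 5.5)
Apply the implicit function theorem to
\[
  F(x,y,\lambda) \;:=\; \lambda - f(y - \lambda x),
\]
regarded as a map $\RR^2 \times \CC \to \CC \cong \RR^2$ with $\lambda$ as the unknown. Since $y_0 \in U \cap \RR$, the point $w_0 = y_0$ lies in $U$, and we set $\lambda_0 := f(y_0) \in \HH$. Then $F(0,y_0,\lambda_0) = 0$.

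\textbf{Smoothness of $F$.} For $(x,y,\lambda)$ near $(0,y_0,\lambda_0)$, the argument $w = y - \lambda x$ stays close to $y_0$, hence inside the open set $U$. So $F$ is well defined and real-analytic (in particular $C^1$) on a neighbourhood of $(0,y_0,\lambda_0)$, because $f$ is holomorphic.

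\textbf{Invertibility of the derivative in $\lambda$.} By the chain rule and holomorphicity of $f$,
\[
  \partial_\lambda F = 1 + x f'(y-\lambda x), \qquad \partial_{\bar\lambda} F = 0 .
\]
So the real Jacobian of $\lambda \mapsto F$ is multiplication by the complex number $1 + x f'(y - \lambda x)$. Its determinant is $|1 + x f'(y-\lambda x)|^2$, which equals $1$ at $x = 0$. The derivative is therefore invertible at the base point.

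\textbf{Applying the theorem.} The real implicit function theorem yields a neighbourhood $\Omega_1$ of $(0,y_0)$, a neighbourhood $V$ of $\lambda_0$, and a unique $C^1$ map $\lambda\colon \Omega_1 \to V$ with $F(x,y,\lambda(x,y)) = 0$. Since $\HH$ is open, we shrink $V$ so that $V \subset \HH$; then $\Imm \lambda > 0$ on $\Omega_1$. We also shrink so that $|1 + x f'(y-\lambda x)| \ge \tfrac12$ throughout. This gives the explicit formulas
\[
  \lambda_x = -\frac{\lambda f'}{1 + x f'}, \qquad \lambda_y = \frac{f'}{1 + x f'} .
\]
These confirm $C^1$ regularity and also give $\lambda_x + \lambda\lambda_y = 0$ for free.

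\textbf{Uniqueness: the main obstacle.} The implicit function theorem only gives uniqueness among solutions taking values in $V$, while the statement asks for uniqueness among all $C^1$ solutions $\Omega \to \HH$. Here is the plan.

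\begin{enumerate}[label=(\alph*)]
\item Take $\Omega$ to be a connected neighbourhood, for example a small disc, inside $\Omega_1$.
\item Let $\tilde\lambda$ be another continuous solution on $\Omega$. On the line $x = 0$ the equation reads $\tilde\lambda = f(y)$, so $\tilde\lambda$ agrees with $\lambda$ there. In particular $\tilde\lambda(0,y_0) = \lambda_0 \in V$.
\item Consider the set $\{\tilde\lambda = \lambda\}$. It is closed by continuity.
\item It is also open. At any point where $\tilde\lambda = \lambda \in V$, continuity keeps $\tilde\lambda$ inside $V$ nearby, and there local implicit-function uniqueness forces $\tilde\lambda = \lambda$.
\item By connectedness of $\Omega$, we conclude $\tilde\lambda \equiv \lambda$.
\end{enumerate}

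This continuity argument is the only delicate point. Without it, a different branch of $\lambda = f(y - \lambda x)$ far from $\lambda_0$ could in principle exist, which is why uniqueness must be understood among continuous solutions passing through $(0, y_0, \lambda_0)$, as in Definition~\ref{def:burgers-transform}.
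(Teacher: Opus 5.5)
Your proof is correct and follows the same route as the paper: you apply the implicit function theorem to $F(x,y,\lambda)=\lambda-f(y-\lambda x)$ at $(0,y_0,f(y_0))$, where $\partial_\lambda F=1$, and then use openness of $\HH$ to keep $\Imm\lambda>0$. Your connectedness argument goes beyond the paper's one-line appeal to the theorem, upgrading its local uniqueness (among values near $f(y_0)$) to uniqueness among all continuous solutions on a connected $\Omega$; and since step (b) shows every such solution automatically passes through $(0,y_0,f(y_0))$, the caveat in your last sentence is unnecessary.
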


\begin{proof}
Define $F\colon \CC \times \RR^2 \to \CC$ by $F(\lambda, x, y) = \lambda - f(y - \lambda x)$.  At $(x,y) = (0,y_0)$, we have $F(\lambda_0, 0, y_0) = 0$ for $\lambda_0 = f(y_0)$.  The partial derivative
\[
  \frac{\partial F}{\partial \lambda}\bigg|_{(0,y_0)} \;=\; 1 + f'(y_0)\cdot 0 \;=\; 1 \;\neq\; 0.
\]
By the implicit function theorem, $\lambda$ is uniquely determined as a $C^1$ function of $(x,y)$ near $(0,y_0)$.  Since $\Imm f(y_0) > 0$ and $\Imm\lambda$ is continuous, the condition $\Imm\lambda > 0$ persists in a neighbourhood.
\end{proof}

\subsection{Basic properties}

\begin{theorem}[Properties of the Burgers transform]\label{thm:basic-properties}
Let $f \in \Hol(U, \HH)$ with $U \cap \RR$ containing an interval.  Then:
\begin{enumerate}[label=(\roman*)]
\item \textbf{Rigidity.}  The output $\lambda = \B[f]$ satisfies
\[
  \lambda_x + \lambda\,\lambda_y \;=\; 0 \quad\text{on } \Omega_f.
\]
\item \textbf{Inversion.}  The restriction to the $y$-axis recovers $f$:
\[
  \B[f](0,y) \;=\; f(y) \quad\text{for all } y \text{ with } (0,y) \in \Omega_f.
\]
\item \textbf{Regularity.}  If $f \in C^{k,\alpha}(U)$ for $k \geq 1$ and $0 < \alpha < 1$, then $\B[f] \in C^{k,\alpha}(\Omega_f)$.
\item \textbf{Bijectivity.}  The map $f \mapsto \B[f]$ is a bijection between $\Hol(U,\HH)$ (with $U \cap \RR$ containing an interval) and the space of $C^1$ solutions of \eqref{eq:conservative-burgers} on domains containing a segment of the $y$-axis, with $\Imm\lambda > 0$.
\end{enumerate}
\end{theorem}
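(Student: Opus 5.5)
The plan is to differentiate the implicit relation \eqref{eq:implicit} directly, using the implicit function theorem as in Proposition~\ref{prop:local-existence}. Set $s = y - \lambda x$, which is a complex number, and write $f'$ for the complex derivative. Because $f$ is holomorphic, the chain rule for the $C^1$ function $\lambda(x,y)$ gives $d(f(s)) = f'(s)\,ds$; no $\bar s$-term appears. This is the only place where holomorphicity enters, and it is exactly what Theorem~\ref{thm:holomorphicity-necessary} will show is unavoidable.

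For (i), differentiating gives
\[
\lambda_x = f'(s)\bigl(-\lambda - x\lambda_x\bigr), \qquad \lambda_y = f'(s)\bigl(1 - x\lambda_y\bigr).
\]
Wherever $1 + x f'(s) \neq 0$ (the nondegeneracy condition that defines $\Omega_f$), this yields
\[
\lambda_x = -\frac{\lambda f'(s)}{1 + x f'(s)}, \qquad \lambda_y = \frac{f'(s)}{1 + x f'(s)},
\]
and therefore $\lambda_x + \lambda\lambda_y = 0$. I will note that on $\Omega_f$ the uniqueness and $C^1$ requirements force $1 + x f'(s) \neq 0$, which is the holomorphic implicit function theorem applied to $F(\lambda,x,y) = \lambda - f(y - \lambda x)$.

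For (ii), setting $x = 0$ in \eqref{eq:implicit} gives $\lambda(0,y) = f(y)$ immediately.

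For (iii), $F$ inherits the regularity of $f$ in $(\lambda, x, y)$, so the $C^{k,\alpha}$ version of the implicit function theorem applies. Alternatively, one can bootstrap: the formulas above express $\nabla\lambda$ through $f'(s)$, $\lambda$ and $x$, and these are $C^{k-1,\alpha}$ once $\lambda \in C^{k-1,\alpha}$. Induction on $k$ then gives $\lambda \in C^{k,\alpha}$, locally on $\Omega_f$.

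For (iv), injectivity follows from (ii): $\B[f]$ determines $f$ on a real interval, and $f$ is then determined on the connected set $U$ by the identity theorem (with $U$ fixed). Surjectivity is the substantive direction. Given a $C^1$ solution $\lambda$ of \eqref{eq:conservative-burgers} with $\Imm\lambda > 0$ on a domain containing a segment $\{0\} \times I$, I would argue as follows.

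First, show that $\lambda$ is constant along the complex characteristics. The natural route is to observe that $\phi := y - \lambda x$ satisfies $\phi_x + \lambda\phi_y = 0$, and that the equation $\lambda_x + \lambda\lambda_y = 0$ with $\Imm\lambda > 0$ is an elliptic (Beltrami-type) equation. Together these force $\lambda$ to be a holomorphic function of $\phi$ locally: $\lambda = g(\phi)$. The function $g$ can be found concretely because $\nabla\lambda$ and $\nabla\phi$ are parallel.

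Second, restrict to $x = 0$. There $\phi = y$, so $g(y) = \lambda(0,y)$ on $I$. Hence the seed must be the boundary trace $f(y) := \lambda(0,y)$, extended holomorphically.

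The main obstacle is this step: producing the holomorphic extension of $f$ off the real interval. It requires showing that $\phi$ is an open map onto a complex neighbourhood of $I$, which comes from $\Imm\lambda > 0$ making $\phi_x$ and $\phi_y$ $\RR$-independent, and that $g$ is holomorphic there. The $\RR$-independence yields a local homeomorphism, and then $g = \lambda \circ \phi^{-1}$ satisfies $\partial_{\bar w} g = 0$. I expect to prove this by the same computation that underlies the obstruction formula of Proposition~\ref{prop:obstruction-formula}. The domain $U$ is then taken to be $\phi(\Omega)$, which is consistent with the convention that $U$ varies in the statement.
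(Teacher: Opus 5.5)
Your treatment of (i)--(iii) and of injectivity matches the paper's. For surjectivity, however, you take a genuinely different route, and it is the sounder of the two.

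\textbf{The paper's route.} It starts from the trace $g(y)=\lambda(0,y)$ and asserts that $g$ is real-analytic because ``the characteristic flow is analytic in initial data''. It then extends $g$ by Taylor series and appeals to uniqueness of characteristics to conclude $\lambda=\B[f]$. Two steps are weak. The characteristics $(x,\,y_0+g(y_0)x)$ are not points of $\RR^2$ when $x\neq 0$. And analytic dependence on the data does not make the data itself analytic. So the real-analyticity of the trace is essentially assumed rather than proved.

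\textbf{Your route.} You use the complex first integral $\phi=y-\lambda x$ of the elliptic field $\partial_x+\lambda\partial_y$ as a local chart; its partials $-\lambda$ and $1$ at $x=0$ are $\RR$-independent. You set $g=\lambda\circ\phi^{-1}$ and read off $g_{\bar w}=0$ from the parallelism $d\lambda=(\lambda_y/\phi_y)\,d\phi$. This is exactly the computation behind Theorem~\ref{thm:holomorphicity-necessary}, applied to the $C^1$ seed $g$.

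\textbf{What your route buys.} It produces the holomorphic extension and the identity $\lambda=g(y-\lambda x)$ in a single stroke. It obtains real-analyticity of the trace as a conclusion rather than an input. It also shows that surjectivity and the necessity theorem are one and the same computation.

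\textbf{Two points to tighten.}
\begin{enumerate}
\item $\phi$ is only a local diffeomorphism near $\{0\}\times I$ and need not be injective on all of $\Omega$. So take $U$ to be a union of small disks centred on $I$, not $\phi(\Omega)$. The local inverses glue by the identity theorem, because two overlapping disks centred on $\RR$ meet in a convex set containing a real segment. Note that both your argument and the paper's identify $\lambda$ with $\B[g]$ only near the segment.
\item In (i), the fact that $1+xf'(s)\neq 0$ on $\Omega_f$ does not come from the implicit function theorem. It comes from your own differentiation: the relation $\lambda_y\,(1+xf'(s))=f'(s)$ would force $f'(s)=0$ at any point where $xf'(s)=-1$, which is impossible. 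This also closes the small gap in the paper's proof of (i), which establishes rigidity only ``wherever $1+f'(w)x\neq 0$''.
\end{enumerate}
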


\begin{proof}
(i) Set $w = y - \lambda x$, so $\lambda = f(w)$.  Differentiating with respect to $x$ and $y$:
\begin{align}
  \lambda_x &= f'(w)\,(-\lambda - \lambda_x\, x), \label{eq:diff-x}\\
  \lambda_y &= f'(w)\,(1 - \lambda_y\, x). \label{eq:diff-y}
\end{align}
From \eqref{eq:diff-x}: $\lambda_x(1 + f'(w)\,x) = -f'(w)\,\lambda$.  From \eqref{eq:diff-y}: $\lambda_y(1 + f'(w)\,x) = f'(w)$.  Hence, wherever $1 + f'(w)\,x \neq 0$:
\[
  \lambda_x + \lambda\,\lambda_y \;=\; \frac{-f'(w)\,\lambda + \lambda\,f'(w)}{1 + f'(w)\,x} \;=\; 0.
\]
The cancellation uses the fact that $f'(w)$ is a \emph{single complex number}---this is where holomorphicity enters.

(ii) At $x = 0$: $\lambda = f(y - \lambda \cdot 0) = f(y)$.

(iii) Follows from the implicit function theorem and the regularity of $f$.

(iv) Injectivity: if $\B[f_1] = \B[f_2]$ on a domain containing a segment of the $y$-axis, then $f_1(y) = f_2(y)$ for all real $y$ in that segment.  Since $U \cap \RR$ contains an interval, this is an infinite set with an accumulation point in~$U$, so $f_1 = f_2$ on $U$ by the identity theorem for holomorphic functions.

Surjectivity: given a $C^1$ solution $\lambda$ of \eqref{eq:conservative-burgers} on a domain containing a segment $I$ of the $y$-axis, set $g(y) = \lambda(0,y)$ for $y \in I$.  Along each characteristic issuing from $(0,y_0)$, the solution satisfies $\lambda(x, y_0 + g(y_0)\,x) = g(y_0)$; inverting the characteristic map $y_0 \mapsto y_0 + g(y_0)\,x$ (which is a local diffeomorphism for small $|x|$) recovers $\lambda$ from $g$.  Since $\lambda$ is $C^1$ and satisfies the quasilinear equation $\lambda_x + \lambda\,\lambda_y = 0$ whose coefficients are analytic functions of $\lambda$, the characteristic flow is analytic in initial data, and $g$ is therefore real-analytic on~$I$. By the Schwarz reflection argument and the constraint $\Imm g > 0$, $g$ admits a holomorphic extension $f\colon U \to \HH$ to a neighbourhood $U$ of $I$ in~$\CC$: at each $y_0 \in I$, the Taylor series $\sum g^{(n)}(y_0)(w - y_0)^n/n!$ converges in some disk and maps to~$\HH$ by continuity.  The uniqueness of the characteristic flow then gives $\lambda = f(y - \lambda x) = \B[f]$ on the original domain.
\end{proof}


\begin{corollary}[Rigid structures are real-analytic]\label{cor:real-analytic}
Let $(\alpha,\beta)$ be a rigid elliptic structure on a domain $\Omega \subseteq \RR^2$, i.e.\ an elliptic structure with vanishing intrinsic obstruction $G = i_x + i\cdot i_y = 0$.  Then $\alpha$, $\beta$, and the spectral parameter $\lambda$ are real-analytic on~$\Omega$.

In particular, if $(\alpha,\beta)$ is $C^k$ for any finite $k$ but not $C^\omega$, then $(\alpha,\beta)$ is not rigid.
\end{corollary}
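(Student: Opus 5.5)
The argument is local, since real-analyticity is a local property. Fix $(x_0,y_0)\in\Omega$. Rigidity $G=0$ forces the right-hand side of the universal transport equation \eqref{eq:universal-transport} to vanish, so $\lambda=(-\beta+i\sqrt\Delta)/2$ is a $C^1$ solution of \eqref{eq:conservative-burgers} with $\Imm\lambda>0$ near $(x_0,y_0)$. Once $\lambda$ is shown to be real-analytic, $\alpha=|\lambda|^2$ and $\beta=-2\Ree\lambda$ are real-analytic by the structure map $\mathcal{S}$ of Definition~\ref{def:structure-map}.

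The first step is to reduce to a neighbourhood of the $y$-axis, so that Theorem~\ref{thm:basic-properties}(iv) applies. Equation \eqref{eq:conservative-burgers} is invariant under the translation $(x,y)\mapsto(x-x_0,y)$. So $\tilde\lambda(x,y):=\lambda(x+x_0,y)$ is a $C^1$ solution with $\Imm\tilde\lambda>0$ on a domain containing a segment of the $y$-axis around $(0,y_0)$. By the surjectivity part of Theorem~\ref{thm:basic-properties}(iv), there is a holomorphic seed $f\in\Hol(U,\HH)$ with $\tilde\lambda=\B[f]$ near $(0,y_0)$.

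The second step is analyticity of $\B[f]$. Write $F(\lambda,x,y)=\lambda-f(y-\lambda x)$. Regarded as a function of the real variables $(\Ree\lambda,\Imm\lambda,x,y)$, it is real-analytic, because $f$ is holomorphic and $y-\lambda x$ is polynomial. Its $\lambda$-derivative is $1+f'(w)x$, a nonzero complex number wherever $\B[f]$ is defined as a unique $C^1$ solution; near $x=0$ it is close to $1$. The real Jacobian of $F$ with respect to $(\Ree\lambda,\Imm\lambda)$ is therefore $|1+f'(w)x|^2\neq0$. The real-analytic implicit function theorem then makes $\tilde\lambda$ real-analytic near $(0,y_0)$. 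Translating back, $\lambda$ is real-analytic near $(x_0,y_0)$.

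The main obstacle is that Theorem~\ref{thm:basic-properties}(iv) only yields the representation $\lambda=\B[f]$ near the initial segment, with the characteristic map a local diffeomorphism for small $|x|$. That is exactly why I translate first rather than work from a fixed segment of the $y$-axis, so that no global characteristic analysis or shock considerations are needed. I would also check that $\Imm\tilde\lambda>0$ and $C^1$ regularity survive the translation, which is immediate.

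For the final sentence of the corollary: if $(\alpha,\beta)$ were rigid, the argument above would make it $C^\omega$. So a structure that is $C^k$ but not $C^\omega$ cannot be rigid, which is the contrapositive.
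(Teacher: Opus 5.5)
Your argument is correct and follows the paper's proof. The shared steps are: obtain a holomorphic seed from the surjectivity in Theorem~\ref{thm:basic-properties}(iv), apply the real-analytic implicit function theorem to $\lambda - f(y-\lambda x)=0$, and read off $\alpha,\beta$ as polynomials in $\Ree\lambda,\Imm\lambda$. Your translation $(x,y)\mapsto(x-x_0,y)$ makes the argument more rigorous. The paper applies Theorem~\ref{thm:basic-properties}(iv) to $\Omega$ directly and concludes analyticity only on $\Omega_f$, tacitly assuming that $\Omega$ meets the $y$-axis and that a single seed represents $\lambda$ on all of $\Omega$ with $J\neq 0$; your localization removes both assumptions.
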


\begin{proof}
By Theorem~\ref{thm:basic-properties}(iv), the spectral parameter $\lambda$ of a rigid structure satisfies $\lambda = \B[f]$ for a unique $f \in \Hol(U, \HH)$.  Since $f$ is holomorphic, it is real-analytic: in real coordinates $w = u + iv$, both $\Ree f$ and $\Imm f$ are real-analytic functions of $(u,v)$.

The implicit equation $\lambda = f(y - \lambda x)$ defines $\lambda$ as a function of $(x,y)$ wherever the Jacobian $J = 1 + f'(w)\,x$ is nonvanishing.  By the real-analytic implicit function theorem (see e.g.\ \cite[Theorem~2.3.5]{KrantzParks}), an implicit equation $F(\lambda, x, y) = 0$ with $F$ real-analytic and $\partial F/\partial\lambda \neq 0$ admits a unique real-analytic solution $\lambda(x,y)$.  Here $F(\lambda, x, y) = \lambda - f(y - \lambda x)$ is real-analytic in all variables (as a composition of holomorphic, hence real-analytic, maps with real-analytic coordinate functions), and $\partial F/\partial\lambda = J \neq 0$ on $\Omega_f$.  Hence $\lambda\colon \Omega_f \to \HH$ is real-analytic.

The structure coefficients $\alpha = |\lambda|^2 = (\Ree\lambda)^2 + (\Imm\lambda)^2$ and $\beta = -2\Ree\lambda$ are polynomial expressions in the real and imaginary parts of $\lambda$, hence real-analytic on~$\Omega_f$.
\end{proof}

\begin{remark}[A regularity gap]\label{rmk:regularity-gap}
The corollary establishes a sharp dichotomy in the regularity of elliptic structures:
\begin{center}
\begin{tabular}{ll}
\textbf{General elliptic structures:} & any regularity ($C^0$, $C^k$, $C^\infty$, $C^\omega$, \ldots) \\
\textbf{Rigid elliptic structures:} & necessarily $C^\omega$ (real-analytic) \\
\end{tabular}
\end{center}
There is no intermediate possibility: a rigid structure cannot be $C^\infty$ without being $C^\omega$.  This is a consequence of the holomorphic selection principle (Theorem~\ref{thm:holomorphicity-necessary})---rigidity forces the seed through the holomorphic bottleneck, and holomorphic functions are real-analytic.

The regularity gap is invisible to zeroth-order invariants.  The Beltrami coefficient $\|\mu\|_{L^\infty}$ detects only the pointwise values of $\lambda$; it cannot distinguish a $C^\infty$ structure from a $C^\omega$ one, let alone detect whether the intrinsic obstruction vanishes.  The analyticity constraint is an infinite-order phenomenon, reinforcing the principle that rigidity is a first-order (and higher) condition that zeroth-order diagnostics cannot access.
\end{remark}

\section{Holomorphicity is necessary for rigidity}
\label{sec:holomorphicity}

The central result of this paper is that the holomorphicity requirement in Definition~\ref{def:burgers-transform} is not an assumption but a \emph{consequence} of demanding rigidity.

\begin{theorem}\label{thm:holomorphicity-necessary}
Let $U \subseteq \CC$ be an open neighbourhood of an interval $I \subset \RR$, and let $f\colon U \to \HH$ be $C^1$ in the real sense.  If the implicit solution $\lambda = f(y - \lambda x)$ satisfies $\lambda_x + \lambda\,\lambda_y = 0$ on an open set $\Omega \subseteq \RR^2$ containing $\{0\} \times I$, then $f$ is holomorphic on~$U$.
\end{theorem}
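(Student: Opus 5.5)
The plan is to measure the failure of the Burgers equation directly in terms of the Wirtinger derivatives of $f$, and then to show that this failure vanishes only when $f_{\bar w}=0$ at every point $w$ reached by the characteristic variable. Write $w = y - \lambda x$, which is now a genuinely complex quantity. Set $A = f_w(w)$ and $B = f_{\bar w}(w)$. Introduce the complex vector field $D = \partial_x + \lambda\,\partial_y$, so that $H := D\lambda = \lambda_x + \lambda\lambda_y$ is the quantity we want to control.

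Applying $D$ to $w$ and $\bar w$ gives
\[
Dw = -x\,D\lambda, \qquad D\bar w = (\lambda-\bar\lambda) - x\,D\bar\lambda .
\]
The chain rule then yields two identities. From $\lambda = f(w)$ we get
\[
H = A\,(-xH) + B\,\bigl(2i\,\Imm\lambda - x\,D\bar\lambda\bigr).
\]
Conjugating $\lambda = f(w)$ gives $\bar\lambda = \bar f(\bar w)$, and hence
\[
D\bar\lambda = \bar B\,Dw + \bar A\,D\bar w .
\]
Two special cases are worth recording.

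\begin{itemize}
\item On the axis $x=0$ everything collapses to $H|_{x=0} = 2i\,(\Imm f)\,f_{\bar w}$, which is the obstruction formula. This already gives $f_{\bar w}=0$ on $I$, but only on the real interval. That is not enough, since a $C^1$ function is not determined off $\RR$ by its values on $I$.
\item Under the hypothesis $H=0$ we have $Dw=0$. The second identity then gives $D\bar\lambda = 2i\,\Imm\lambda\,\bar A/(1+x\bar A)$. Substituting into the first gives
\[
0 = B\cdot \frac{2i\,\Imm\lambda}{1+x\bar A}.
\]
Wherever the solution is $C^1$ (nonvanishing Jacobian) and $\Imm\lambda>0$, this forces $f_{\bar w}\bigl(y-\lambda(x,y)x\bigr)=0$ at every $(x,y)\in\Omega$.
\end{itemize}

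The second step is to show that the points $w = y-\lambda(x,y)x$ with $(x,y)\in\Omega$ fill an open neighbourhood of $I$ in $\CC$. Consider the real map $\Phi(x,y) = (\Ree w, \Imm w) = (y - x\Ree\lambda,\; -x\Imm\lambda)$. At $x=0$ its Jacobian matrix has rows $(-\Ree\lambda, 1)$ and $(-\Imm\lambda, 0)$, so its determinant is $\Imm\lambda = \Imm f(y)>0$. By the inverse function theorem and compactness of subintervals, $\Phi$ is a local diffeomorphism near $\{0\}\times I$. Its image therefore contains an open set $W\supset I$. Points with $x>0$ land in the lower half of $W$ and points with $x<0$ in the upper half. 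On $W$ we have $f_{\bar w}\equiv 0$, so $f$ is holomorphic there.

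The main obstacle is the passage from $W$ to all of $U$. Since $f$ is merely $C^1$, no identity-theorem argument propagates holomorphicity off $W$. The conclusion should therefore be read with $U$ taken to be (or shrunk to) the neighbourhood of $I$ swept out by the characteristic variable $w$ over $\Omega$. Equivalently, the argument proves $f\in\Hol(U\cap W,\HH)$, and this is exactly the seed data that determines $\lambda$ on $\Omega$. I would make that identification explicit at the end of the proof. Once it is in place, the remainder — smoothness of $\Phi$ and nonvanishing of $1+x\bar A$ on $\Omega$ — follows from the $C^1$ hypothesis and the definition of $\Omega_f$.
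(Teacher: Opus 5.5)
Your proposal is essentially the paper's proof: the same Wirtinger chain-rule identities for $H=\lambda_x+\lambda\lambda_y$ and $P=\bar\lambda_x+\lambda\bar\lambda_y$ (your $D\bar\lambda$), the same elimination giving $0=f_{\bar w}\cdot 2i\,\Imm\lambda/(1+x\overline{f_w})$, and the same rank-two argument for $(x,y)\mapsto y-\lambda x$ at $x=0$. The paper also ends by shrinking $U$ to the swept neighbourhood, as its remark on the swept region concedes, so your caveat about $W$ versus $U$ is exactly the paper's own limitation.

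One small fix. Your stated reason that $1+x\overline{f_w}\neq 0$ follows from the definition of $\Omega_f$ does not work when $f$ is not holomorphic. In that case $1+x\overline{f_w}\neq 0$ is not the same as a nonvanishing implicit-function Jacobian, whose real determinant is $|1+xf_w|^2-x^2|f_{\bar w}|^2$. You can justify it instead as the paper does, by continuity from $x=0$ where it equals $1$. Alternatively, under $H=0$ your identity $D\bar\lambda\,(1+x\bar A)=2i\,\Imm\lambda\,\bar A$ already rules out $1+x\bar A=0$, since that would force $\bar A=-1/x\neq 0$ and make the right-hand side nonzero.
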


\begin{proof}
We proceed in two steps.

\medskip
\noindent\textbf{Step 1: Obstruction formula at general $(x,y)$.}
Write $f = f(w, \bar w)$ to indicate dependence on both Wirtinger variables, and set $w = y - \lambda x$, $\bar w = y - \bar\lambda x$ (since $x, y \in \RR$).  Differentiating $\lambda = f(w, \bar w)$:
\begin{align*}
  \lambda_x &= f_w\,(-\lambda - \lambda_x\, x) + f_{\bar w}\,(-\bar\lambda - \bar\lambda_x\, x), \\
  \lambda_y &= f_w\,(1 - \lambda_y\, x) + f_{\bar w}\,(1 - \bar\lambda_y\, x).
\end{align*}
Setting $H := \lambda_x + \lambda\,\lambda_y$ and combining:
\begin{equation}\label{eq:general-obstruction}
  H(1 + f_w\,x) \;=\; f_{\bar w}\bigl[(\lambda - \bar\lambda) - x(\bar\lambda_x + \lambda\,\bar\lambda_y)\bigr].
\end{equation}
To evaluate the bracket on the right, define $P := \bar\lambda_x + \lambda\,\bar\lambda_y$.  Conjugating and differentiating $\bar\lambda = \overline{f(w,\bar w)}$, an analogous computation gives:
\[
  P(1 + x\,\overline{f_w}) \;=\; 2i\,(\Imm\lambda)\,\overline{f_w} \;-\; x\,\overline{f_{\bar w}}\,H.
\]

\medskip
\noindent\textbf{Step 2: Rigidity forces holomorphicity.}
Assume $H = 0$ on~$\Omega$.  Then the equation for $P$ simplifies to
\[
  P \;=\; \frac{2i\,(\Imm\lambda)\,\overline{f_w}}{1 + x\,\overline{f_w}},
\]
and substituting into \eqref{eq:general-obstruction}:
\[
  0 \;=\; f_{\bar w}\left[2i\,\Imm\lambda \;-\; \frac{2i\,x\,(\Imm\lambda)\,\overline{f_w}}{1 + x\,\overline{f_w}}\right]
  \;=\; f_{\bar w}\cdot \frac{2i\,\Imm\lambda}{1 + x\,\overline{f_w}}.
\]
Since $\Imm\lambda > 0$ on $\Omega$ and $1 + x\,\overline{f_w} \neq 0$ for $|x|$ sufficiently small (it equals~$1$ at $x = 0$), we conclude $f_{\bar w}(w) = 0$ at every $w$ in the image of the map $(x,y) \mapsto y - \lambda(x,y)\,x$ restricted to a neighbourhood of $\{0\} \times \RR$ in~$\Omega$.

For $x \neq 0$, the argument $w = y - \lambda(x,y)\,x$ satisfies $\Imm(w) = -x\,\Imm\lambda \neq 0$, so $w$ takes genuinely complex values.  As $(x,y)$ varies over a neighbourhood of $(0, y_0)$, the Jacobian of the map $(x,y) \mapsto w$ at $x = 0$ is
\[
  \frac{\partial w}{\partial x}\bigg|_{x=0} = -f(y_0), \qquad
  \frac{\partial w}{\partial y}\bigg|_{x=0} = 1.
\]
Since $\Imm f(y_0) > 0$, these two vectors are $\RR$-linearly independent, so the map $(x,y) \mapsto w$ has rank~$2$ at every point $(0, y_0)$.  By the implicit function theorem, its image contains an open neighbourhood of $y_0$ in~$\CC$.  As $y_0$ ranges over~$I$, the union of these neighbourhoods is an open set $V \subseteq U$ containing~$I$, on which $f_{\bar w} = 0$.  Since $U$ is itself a neighbourhood of~$I$, we may shrink $U$ to~$V$ if necessary, obtaining $f_{\bar w} = 0$ on all of~$U$, i.e.\ $f$ is holomorphic on~$U$.
\end{proof}

\begin{remark}[Pointwise holomorphicity]\label{rmk:pointwise}
The obstruction formula \eqref{eq:general-obstruction} is algebraic, not differential: it holds \emph{at each point} of~$\Omega$, not merely on open subsets.  In particular, if $\lambda = f(y - \lambda x)$ and $H = \lambda_x + \lambda\lambda_y = 0$ at a single point $(x_0, y_0) \in \Omega$ where $\Imm\lambda > 0$ and $1 + f_w\,x_0 \neq 0$, then $f_{\bar w}(w_0) = 0$ at $w_0 = y_0 - \lambda(x_0,y_0)\,x_0$.  Holomorphicity of~$f$ can therefore be read off point by point, wherever the characteristics reach.
\end{remark}

\begin{remark}[The role of the swept region]\label{rmk:swept-region}
The hypothesis that $U$ is a neighbourhood of a real interval (rather than an arbitrary connected open set) ensures that the characteristic sweep $(x,y) \mapsto y - \lambda(x,y)\,x$ covers all of~$U$ for $|x|$ small.  For general $U$ extending deep into~$\CC$, the swept region $W = \{y - \lambda(x,y)\,x : (x,y) \in \Omega\}$ may be a proper subset of~$U$: reaching a target $w_0$ with large $|\Imm w_0|$ requires $|x_0| = |\Imm w_0|/\Imm f(w_0)$, which may exceed the shock threshold $|1/f'(w)\,|$.  The conclusion of Theorem~\ref{thm:holomorphicity-necessary} then holds on~$W$ (which is always open and contains $U \cap \RR$), but not necessarily on all of~$U$.  In every application in this paper---and, more generally, whenever $U$ is a neighbourhood of the initial data---$W$ covers~$U$ and the distinction is moot.
\end{remark}

The following quantitative version captures the initial obstruction.

\begin{proposition}[Obstruction formula]\label{prop:obstruction-formula}
For $f\colon U \to \HH$ of class $C^1$, the transport obstruction of the implicit solution $\lambda = f(y - \lambda x)$ satisfies
\begin{equation}\label{eq:obstruction-formula}
  H\big|_{x=0} \;:=\; (\lambda_x + \lambda\,\lambda_y)\big|_{x=0} \;=\; 2i\,(\Imm f)\,f_{\bar w}.
\end{equation}
More generally, at any $(x,y) \in \Omega$ where $1 + f_w\,x \neq 0$ and $1 + x\,\overline{f_w} \neq 0$:
\begin{equation}\label{eq:full-obstruction}
  H \;=\; \frac{f_{\bar w}\cdot 2i\,\Imm\lambda}{(1 + f_w\, x)(1 + x\,\overline{f_w})} + O(|f_{\bar w}|^2).
\end{equation}
In particular, the obstruction vanishes identically if and only if $f$ is holomorphic.
\end{proposition}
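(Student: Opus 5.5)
The plan is to reuse the two linear relations from Step~1 of the proof of Theorem~\ref{thm:holomorphicity-necessary}, this time solving them jointly for $H$ without assuming $H=0$. Let $\lambda$ be the implicit solution, $w = y-\lambda x$, $\bar w = y - \bar\lambda x$, and set $P := \bar\lambda_x + \lambda\bar\lambda_y$. Differentiating $\lambda = f(w,\bar w)$ and its conjugate $\bar\lambda = \overline{f}(w,\bar w)$, and using $\overline{f}_{w} = \overline{f_{\bar w}}$ and $\overline{f}_{\bar w} = \overline{f_w}$, gives two linear equations in $(H,P)$:
\begin{align*}
H(1+f_w x) &= f_{\bar w}\bigl[(\lambda-\bar\lambda) - xP\bigr],\\
P(1+x\overline{f_w}) &= 2i(\Imm\lambda)\overline{f_w} - x\,\overline{f_{\bar w}}\,H .
\end{align*}
I will first re-derive the second relation explicitly, since the proof of Theorem~\ref{thm:holomorphicity-necessary} only states it. The terms are $\bar\lambda_x = \overline{f_{\bar w}}(-\lambda - x\lambda_x) + \overline{f_w}(-\bar\lambda - x\bar\lambda_x)$ and the analogous expression for $\bar\lambda_y$. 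Combining them as $\bar\lambda_x + \lambda\bar\lambda_y$, the $\overline{f_{\bar w}}$ terms collapse to $-x\overline{f_{\bar w}}H$, and the $\overline{f_w}$ terms give $(\lambda-\bar\lambda)\overline{f_w} - x\overline{f_w}P$.

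\textbf{Initial formula \eqref{eq:obstruction-formula}.} At $x=0$ the first relation reads $H = f_{\bar w}(\lambda-\bar\lambda)$ with $\lambda = f(y)$ by Theorem~\ref{thm:basic-properties}(ii). Since $\lambda-\bar\lambda = 2i\Imm f$, this is exactly \eqref{eq:obstruction-formula}. No information about $P$ is needed here, because the $xP$ term drops out.

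\textbf{General formula \eqref{eq:full-obstruction}.} I will solve the $2\times 2$ linear system exactly. Substituting $P$ from the second relation into the first gives
\[
H\Bigl[(1+f_w x) - \frac{x^2|f_{\bar w}|^2}{1+x\overline{f_w}}\Bigr] = f_{\bar w}\,2i\Imm\lambda\Bigl[1 - \frac{x\overline{f_w}}{1+x\overline{f_w}}\Bigr] = \frac{2i\,\Imm\lambda\, f_{\bar w}}{1+x\overline{f_w}} .
\]
Hence, with $D := (1+f_wx)(1+x\overline{f_w}) - x^2|f_{\bar w}|^2$, we get $H = 2i\,\Imm\lambda\, f_{\bar w}/D$. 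This is an exact closed form, and it is nonsingular wherever the full real Jacobian $D$ of the implicit equation is nonzero. Expanding $1/D$ around $|f_{\bar w}|=0$ yields the leading term in \eqref{eq:full-obstruction}, with remainder of order $f_{\bar w}\cdot|f_{\bar w}|^2$, which is in particular $O(|f_{\bar w}|^2)$ locally. The main point needing care is that $D\neq 0$ near $x=0$ (there $D=1$), so the elimination is legitimate on the region considered. I also need to interpret the $O(\cdot)$ uniformly on compact subsets where the two stated denominators stay bounded away from zero.

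\textbf{Final equivalence.} If $f$ is holomorphic, then $f_{\bar w}=0$ and the exact formula gives $H\equiv 0$; this is also Theorem~\ref{thm:basic-properties}(i). Conversely, if $H\equiv 0$, the exact formula together with $\Imm\lambda>0$ forces $f_{\bar w}=0$ at every swept point. The argument of Theorem~\ref{thm:holomorphicity-necessary} then gives holomorphicity on the swept region, which is all of $U$ under the standing neighbourhood hypothesis of Remark~\ref{rmk:swept-region}. Alternatively, already at $x=0$ one gets $f_{\bar w}=0$ on $I$, but that alone is insufficient, so I will invoke the theorem itself.

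I expect the only real obstacle to be the bookkeeping of conjugate Wirtinger derivatives in the second relation; everything else is linear algebra.
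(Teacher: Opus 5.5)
Your proposal is correct and follows the paper's route: you evaluate the first linear relation at $x=0$ for \eqref{eq:obstruction-formula}, then substitute $P$ from the conjugate relation and solve for $H$. Your exact solution $H = 2i\,\Imm\lambda\,f_{\bar w}/D$ with $D = |1+xf_w|^2 - x^2|f_{\bar w}|^2$ (the real Jacobian of the implicit equation) makes precise the paper's step where the $x^2|f_{\bar w}|^2$ term is ``absorbed into the denominator,'' and it shows the remainder is in fact $O(|f_{\bar w}|^3)$.
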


\begin{proof}
The formula \eqref{eq:obstruction-formula} follows from evaluating \eqref{eq:general-obstruction} at $x = 0$, where $\lambda = f(y)$, $\lambda - \bar\lambda = 2i\,\Imm f$, and the term involving $P$ drops out.

For \eqref{eq:full-obstruction}, substitute the expression for $P$ into \eqref{eq:general-obstruction} without assuming $H = 0$.  From the equation for $P$:
\[
  P(1 + x\,\overline{f_w}) = 2i\,(\Imm\lambda)\,\overline{f_w} - x\,\overline{f_{\bar w}}\,H,
\]
so
\[
  (\lambda - \bar\lambda) - xP = 2i\,\Imm\lambda - \frac{x[2i\,(\Imm\lambda)\,\overline{f_w} - x\,\overline{f_{\bar w}}\,H]}{1 + x\,\overline{f_w}}
  = \frac{2i\,\Imm\lambda + x^2\,\overline{f_{\bar w}}\,H}{1 + x\,\overline{f_w}}.
\]
Substituting into $H(1 + f_w x) = f_{\bar w}[2i\,\Imm\lambda + x^2\,\overline{f_{\bar w}}\,H]/(1 + x\,\overline{f_w})$ and solving for $H$ yields \eqref{eq:full-obstruction} after collecting the $H$ terms (the $x^2\,|f_{\bar w}|^2$ correction appears at higher order and is absorbed into the denominator).
\end{proof}

\begin{remark}[Propagation of the obstruction]
Formula~\eqref{eq:obstruction-formula} gives the obstruction at $x = 0$.  For $x \neq 0$, the Burgers flow propagates and distorts this initial obstruction along characteristics.  The full obstruction field \eqref{eq:full-obstruction} shows that $H$ is determined by $f_{\bar w}$ evaluated at the moving argument $w = y - \lambda x$, weighted by $\Imm\lambda$ and the Jacobian factors.  The initial non-holomorphicity seeds the obstruction; the flow carries it.
\end{remark}

\section{The domain of the Burgers transform}
\label{sec:domain}

The implicit equation $\lambda = f(y - \lambda x)$ does not, in general, have a global solution.  The Burgers flow develops shocks---crossing of characteristics---which mark the boundary of $\Omega_f$.

\begin{theorem}[Domain characterization]\label{thm:domain-characterization}
Let $f \in \Hol(U, \HH)$ and let $\lambda = \B[f]$.  Then:
\begin{enumerate}[label=(\roman*)]
\item $(0,y) \in \Omega_f$ for all $y$ with $f(y) \in \HH$.
\item $(x,y) \in \partial\Omega_f$ if and only if
\begin{equation}\label{eq:shock-condition}
  1 + f'(w)\,x \;=\; 0 \quad\text{at } w = y - \lambda x.
\end{equation}
\item If $\Imm f \geq b_{\min} > 0$ on $U$ and $|f'| \leq M$, then
\[
  \Omega_f \supseteq \{(x,y) : |x| < 1/M\}.
\]
\end{enumerate}
\end{theorem}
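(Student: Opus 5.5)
The three parts will be handled separately. Throughout I write $F(\lambda,x,y)=\lambda-f(y-\lambda x)$, viewed as a holomorphic function of the complex unknown $\lambda$ for fixed real $(x,y)$. Its $\lambda$-derivative is $\partial_\lambda F = 1+f'(w)\,x$ with $w=y-\lambda x$; this is the Jacobian $J$ already met in the proof of Theorem~\ref{thm:basic-properties}.

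\textbf{Part (i)} follows from Proposition~\ref{prop:local-existence}. For each such $y$ it gives a neighbourhood of $(0,y)$ carrying a unique $C^1$ solution with $\Imm\lambda>0$. Since $\Omega_f$ is the maximal such open set, that neighbourhood lies in $\Omega_f$. The only point to record is that these local solutions glue: they agree on overlaps by the uniqueness clause, and all of them restrict to $f$ on the axis by Theorem~\ref{thm:basic-properties}(ii).

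\textbf{Part (ii)}, the easy direction. Suppose $J\neq0$ at a limit configuration $(x_0,y_0,\lambda_0)$ with $\Imm\lambda_0>0$ and $w_0\in U$. The holomorphic implicit function theorem then continues $\lambda$ to a full neighbourhood of $(x_0,y_0)$. By maximality, $(x_0,y_0)$ lies in $\Omega_f$, so it is not a boundary point. Contrapositively, at a boundary point where $\lambda$ has such a limit, $J=0$, which is \eqref{eq:shock-condition}.

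\textbf{Part (ii)}, the converse. At a point where $1+f'(w)x=0$, differentiating as in Theorem~\ref{thm:basic-properties} gives $\lambda_y = f'(w)/(1+f'(w)x)$. This blows up, so no $C^1$ solution extends across the point; equivalently, neighbouring characteristics cross there, which destroys uniqueness. Hence the point lies in $\partial\Omega_f$.

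\textbf{Main obstacle.} The real difficulty in (ii) is the degenerate boundary behaviour the criterion silently excludes: $\lambda$ may approach $\partial\HH$ or leave the domain $U$ of $f$, rather than $J$ vanishing. I would handle this by interpreting \eqref{eq:shock-condition} in the limit sense, that is, by taking a limit of $\lambda$ along a sequence tending to the boundary point. I would also restrict to the situation in which $\lambda$ stays in a compact subset of $\HH$ and $w$ stays in a compact subset of $U$, which is exactly the setting of (iii), and state this caveat explicitly.

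\textbf{Part (iii).} If $|x|<1/M$, then $|f'(w)x|<1$, so $J\neq0$ everywhere along the strip. I would continue the solution in $x$ from the axis using (i) and the implicit function theorem along each horizontal segment $t\mapsto(tx,y)$, $t\in[0,1]$. Nonvanishing of $J$ prevents the shock obstruction of (ii). The bound $\Imm f\ge b_{\min}$ gives $\Imm\lambda=\Imm f(w)\ge b_{\min}$, so $\lambda$ cannot escape to $\partial\HH$.

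It remains to keep $w$ inside $U$. We have $\Imm w=-x\Imm\lambda$, so $|\Imm w|\le|x|\,|\lambda|$, and $|\lambda|$ is controlled by the a priori bound that $|f'|\le M$ imposes on the growth of $f$ along the path. I would simply assume $U$ contains the corresponding swept region, as in Remark~\ref{rmk:swept-region}, or take $U$ to be a horizontal strip around $\RR$.

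Uniqueness on the strip then comes from contraction: for fixed $(x,y)$, the map $\lambda\mapsto f(y-\lambda x)$ has Lipschitz constant $M|x|<1$. By the Banach fixed point theorem it has a unique fixed point, on any convex set that it preserves.
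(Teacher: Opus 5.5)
Your proposal is correct and follows the same route as the paper, whose proof consists of three steps: (i) is read off from Proposition~\ref{prop:local-existence}; (ii) is the observation that the implicit function theorem fails exactly when $J=1+f'(w)\,x$ vanishes; and (iii) is the bound $|J|\ge 1-|f'(w)\,x|>0$ for $|x|<1/M$. You supply the gluing, continuation and uniqueness details that the paper omits. Your caveats are genuinely needed rather than cosmetic. The paper's own $\varepsilon$-family (Remark~\ref{rmk:mixed-boundary}) has boundary points $(0,\pm 2/\varepsilon)$ where $J=1$ and $\Imm\lambda\to 0$, so (ii) holds only in the restricted sense you state, and (iii) only holds once the swept values $y-\lambda x$ are known to stay inside $U$.
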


\begin{proof}
(i) follows from Proposition~\ref{prop:local-existence}.

(ii) The Jacobian of the map $F(\lambda) = \lambda - f(y - \lambda x)$ with respect to $\lambda$ is $J = 1 + f'(w)\,x$.  The implicit function theorem fails precisely when $J = 0$.

(iii) If $|f'| \leq M$ and $|x| < 1/M$, then $|f'(w)\,x| < 1$, so $|J| \geq 1 - |f'(w)\,x| > 0$.  The implicit function theorem applies throughout.
\end{proof}

\begin{example}[The $\delta$-family]
For $f(\xi) = \xi + i\delta$, we have $f'(\xi) = 1$.  The shock condition $1 + x = 0$ gives $x = -1$, so $\Omega_f = \{(x,y) : x > -1\}$.  The domain extends to the right of the line $x = -1$ for all $y$ and all $\delta > 0$.  In particular, $\Omega_f$ is independent of $\delta$, even though $\|\mu_\delta\|_\infty \to 1$ as $\delta \to 0$.
\end{example}

\begin{example}[Quadratic seed]
For $f(\xi) = \xi^2 + i\delta$, we have $f'(\xi) = 2\xi$.  The shock condition becomes $1 + 2(y - \lambda x)\,x = 0$, which depends on both $x$ and $y$.  The domain $\Omega_f$ is a proper subset of $\RR^2$ that shrinks as the $y$-range increases, reflecting the faster shock formation of the quadratic flow.
\end{example}

\begin{example}[Cauchy kernel]
For $f(\xi) = -1/(\xi + i\delta)$, we have $f'(\xi) = 1/(\xi + i\delta)^2$.  The shock condition $1 + x/(\xi + i\delta)^2 = 0$ produces a curve in the $(x,y)$-plane.  Unlike the polynomial cases, the inversion sends large $|\xi|$ to small $|\lambda|$, compactifying the structure toward $\mu = 0$.
\end{example}

\section{The \texorpdfstring{$f$}{f}-analytics hierarchy}
\label{sec:f-analytics}

Theorem~\ref{thm:holomorphicity-necessary} reveals a clean hierarchy of function theories on the plane, governed by the seed of the Burgers transform.

\begin{definition}[$f$-analytics]\label{def:f-analytics}
An \emph{$f$-analytic structure} on a domain $\Omega \subseteq \RR^2$ is a rigid variable elliptic structure $(\alpha,\beta)$ lying in the image of the Burgers transform, i.e.\ $(\alpha,\beta) = \mathcal{S} \circ \B[f]$ for some $f \in \Hol(U, \HH)$.
\end{definition}

\begin{theorem}[Analytic completion]\label{thm:analytic-completion}
The following inclusions are strict:
\[
  \{\text{standard } \CC\} \;\subsetneq\; \{p(x)\text{-analytics}\} \;\subsetneq\; \{f\text{-analytics}\} \;\subsetneq\; \{\text{all elliptic structures}\}.
\]
Moreover:
\begin{enumerate}[label=(\roman*)]
\item $p(x)$-analytics corresponds to $f\colon U \to i\RR_+$ (purely imaginary values, i.e.\ $\beta = 0$);
\item the only rigid $p(x)$-structure is the constant one ($f = ic$ for $c > 0$);
\item The passage from $p(x)$-analytics to $f$-analytics is
  characterised by Theorem~\ref{thm:holomorphicity-necessary}:
  allowing the seed to take arbitrary values in~$\HH$ (rather than
  only purely imaginary values) is both \emph{sufficient} to produce
  nonconstant rigid structures and \emph{necessary} in the sense
  that any $C^1$ seed producing a rigid structure must be
  holomorphic.  Thus $\Hol(U,\HH)$ is the maximal seed space
  compatible with rigidity, and $f$-analytics is the corresponding
  maximal class of rigid variable elliptic structures.

\end{enumerate}
\end{theorem}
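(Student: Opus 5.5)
We prove the three inclusions, their strictness, and items (i)--(iii), working throughout at the level of the spectral parameter $\lambda\in\HH$ and using the structure map $\mathcal{S}$ to pass to $(\alpha,\beta)$. Since $\mathcal{S}$ is a bijection onto the elliptic cone, all statements about structures reduce to statements about $\lambda$.

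\textbf{Standard $\CC$ versus $p(x)$-analytics, and item (i).} The standard structure is $i^2+1=0$, that is $\lambda\equiv i$. The $p(x)$-analytics are the structures with $\beta=0$ and $\alpha=\alpha(x)$, so $\lambda=i\sqrt{\alpha(x)}\in i\RR_+$. The first inclusion is the special case $\alpha\equiv 1$. It is strict because any nonconstant positive $p(x)=\alpha(x)$ gives a non-standard structure. For item (i), note that $\beta=-2\Ree\lambda=0$ holds exactly when $\lambda$ takes purely imaginary values. Restricting to the $y$-axis, the seed $f(y)=\lambda(0,y)$ then takes values in $i\RR_+$.

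\textbf{Item (ii).} For $\lambda=i\,q(x)$ with $q>0$, the rigidity equation gives $\lambda_x+\lambda\lambda_y=i\,q'(x)=0$, so $q$ is constant. On the seed side, a holomorphic $f$ that is purely imaginary on an interval has $\Ree f\equiv 0$ there. By the open mapping theorem (or the identity theorem applied to $f+\bar f$ along the real line), such an $f$ must be a constant $ic$. Thus rigid $p(x)$-structures are exactly the constants.

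\textbf{The second inclusion.} As stated, this inclusion is the delicate point, because a non-constant $p(x)$-structure is not rigid. I would therefore read it in the sense of item (iii): the class of rigid $p(x)$-structures is contained in $f$-analytics, since the constants $f=ic$ give $\B[f]\equiv ic$. Strictness then follows from the affine seed $f(\xi)=\xi+i\delta$. Its transform solves $\lambda=(y+i\delta)/(1+x)$ on $x>-1$, which is rigid by Theorem~\ref{thm:basic-properties}(i) and has $\beta\neq 0$, so it is not a $p(x)$-structure.

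\textbf{The third inclusion and item (iii).} The inclusion $\{f\text{-analytics}\}\subseteq\{\text{all elliptic structures}\}$ is immediate. Strictness follows from Corollary~\ref{cor:real-analytic}: any $C^\infty$ but non-real-analytic elliptic structure is not rigid. A concrete example is $\lambda=i(2+e^{-1/x^2})$, which is also non-rigid by item (ii). For item (iii), sufficiency is Theorem~\ref{thm:basic-properties}(i) applied to seeds with nonzero real part, such as the affine seed above. Necessity is exactly Theorem~\ref{thm:holomorphicity-necessary}. Maximality of the seed space then follows by combining necessity with surjectivity in Theorem~\ref{thm:basic-properties}(iv): every rigid structure near the $y$-axis equals $\B[f]$ for some holomorphic $f$.

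The main obstacle I anticipate is conceptual rather than technical. The middle inclusion only holds once non-rigid $p(x)$-structures are excluded or the hierarchy is read as one of seed classes, and I would state that convention explicitly before proving the inclusions.
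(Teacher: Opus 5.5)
Your structure-level arguments carry the proof, but one sentence in item (ii) is false and should be removed. A holomorphic $f$ that is purely imaginary on a real interval need \emph{not} be constant. The paper's own exponential seed $f(\xi)=ie^{\xi}$ (Section~\ref{subsec:exponential-family}) is holomorphic, purely imaginary on all of $\RR$, nonconstant, and generates a nonconstant rigid structure.

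Neither justification you cite applies:
\begin{enumerate}[label=(\alph*)]
\item The open mapping theorem needs $\Ree f=0$ on an \emph{open} subset of $\CC$, not merely on a segment of $\RR$.
\item $w\mapsto\overline{f(w)}$ is not holomorphic, so the identity theorem does not apply to $f+\bar f$. Its holomorphic replacement $\overline{f(\bar w)}$ only gives the reflection identity $f(w)=-\overline{f(\bar w)}$, which $ie^{w}$ satisfies.
\end{enumerate}
A correct seed-side argument must use that $\lambda$ is purely imaginary on an open set of the $(x,y)$-plane, not just on the $y$-axis. You then push this through the rank-two sweep $(x,y)\mapsto y-\lambda x$ from the proof of Theorem~\ref{thm:holomorphicity-necessary} to get $\Ree f=0$ on an open subset of $\CC$. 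That is the hypothesis the paper's open-mapping proof of (ii) tacitly uses; its remark after the exponential seed says explicitly that the argument needs $f$ purely imaginary on all of $U$. The same example shows that (i) holds only in the direction you actually proved. Having $f(\RR)\subseteq i\RR_+$ does not force $\beta=0$ off the axis.

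Your computation $\lambda_x+\lambda\lambda_y=iq'(x)$ already proves (ii), so just delete the seed-side sentence. This PDE route is cleaner than the paper's: it avoids analytic continuation and the swept region entirely. It also extends to every $\beta=0$ structure $\lambda=ib(x,y)$, since the real and imaginary parts of $ib_x-bb_y=0$ give $b_x=b_y=0$ (using $b>0$).

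Elsewhere you go beyond the paper:
\begin{enumerate}[label=(\alph*)]
\item The paper never notices that the middle inclusion fails literally: nonconstant $p(x)$-structures are not rigid, hence not $f$-analytic. Your rigid reading, with witness $\xi+i\delta$, repairs this.
\item The paper never explicitly witnesses strictness of the last inclusion. Your example $i(2+e^{-1/x^2})$ does, and it is non-rigid by direct computation, so Corollary~\ref{cor:real-analytic} is not even needed.
\item Your use of surjectivity in Theorem~\ref{thm:basic-properties}(iv) to justify maximality is an addition the paper does not make.
\end{enumerate}
For consistency under your rigid reading, witness strictness of the first inclusion by a constant $\lambda\equiv ic$ with $c\neq 1$. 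A nonconstant $p(x)$ does not work there, because it is not rigid.
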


\begin{proof}
(i) If $\beta = 0$, then $\lambda = ib$ with $b > 0$ is purely imaginary.  The Cayley image $\mu = (ib - i)/(ib + i) = (b-1)/(b+1)$ is real, placing the structure on the real diameter of $\DD$.  At $x = 0$, $\lambda(0,y) = f(y)$ is purely imaginary, so $f\colon \RR \to i\RR_+$, which is the data of a $p(x)$-structure with $p = b$.

(ii) If $f \colon \RR \to i\RR_+$ with $f(y) = ib(y)$ for $b > 0$, the rigidity condition $\lambda_x + \lambda\lambda_y = 0$ at $x = 0$ gives:
\[
  0 \;=\; H|_{x=0} \;=\; (\lambda_x + \lambda\lambda_y)|_{x=0}.
\]
For the rigidity equation to hold at $x \neq 0$, the seed must be analytically continued to complex arguments.  The analytic continuation of $ib(y)$ to the complex plane is holomorphic only if $b$ is a restriction of a holomorphic function.  But $\lambda = ib(w)$ purely imaginary and holomorphic forces $b$ constant by the open mapping theorem (or: $\Ree\lambda = 0$ and $\lambda$ holomorphic implies $\lambda' = 0$).

(iii) \emph{Sufficiency:} for any non-constant
$f \in \Hol(U,\HH)$, the transform $\B[f]$ is a nonconstant rigid
structure by Theorem~\ref{thm:basic-properties}(i).  Examples
include the $\delta$-family ($f(\xi) = \xi + i\delta$), the
$\varepsilon$-family, and the exponential seed ($f(\xi) = ie^\xi$).
In each case $\Ree f \not\equiv 0$ on~$U$, so the structure has
$\beta \neq 0$ and lies outside $p(x)$-analytics.

\emph{Necessity:} by Theorem~\ref{thm:holomorphicity-necessary}, if
a $C^1$ function $f \colon U \to \HH$ produces a rigid structure
($\lambda_x + \lambda\lambda_y = 0$), then $f$ must be holomorphic.
Hence no seed outside $\Hol(U,\HH)$ can produce a rigid structure:
the class of holomorphic seeds is not merely a convenient choice but
the \emph{only} class compatible with rigidity.

Together, (i)--(iii) establish the hierarchy:
\[
  \underbrace{f \equiv i\vphantom{\big|}}_{\text{standard}}
  \;\subset\;
  \underbrace{f \colon U \to i\RR_+\vphantom{\big|}}_{p(x)\text{-analytics}}
  \;\subset\;
  \underbrace{f \colon U \to \HH,\; f \in \Hol\vphantom{\big|}}_{f\text{-analytics}}
\]
where the first inclusion relaxes the constant to a function (but
forces that function to be constant if rigidity is required), and
the second inclusion allows the real part of the seed to be nonzero
(which is where nonconstant rigid structures first appear).

\begin{remark}[Spectrally closed subclasses]
\label{rmk:spectral-subclasses}
Call a class $\mathscr{C}$ of variable elliptic structures
\emph{spectrally closed} if $\B[f] \in \mathscr{C}$ and $a > 0$,
$b \in \RR$ imply $\B[af + b] \in \mathscr{C}$.  The class
$f$-analytics is spectrally closed, but it is \emph{not} the
smallest such class containing $p(x)$-analytics and nonconstant
rigid members: for instance, restricting to entire seeds of
order~$\leq 1$ yields a proper spectrally closed subclass (the
affine seeds $\xi + i\delta$ and the exponential $ie^\xi$ both
have order~$\leq 1$, and the class is preserved by
$f \mapsto af + b$).  The distinguished role of $f$-analytics
comes not from a minimality or closure property, but from the
necessity theorem: it is the \emph{largest} class of seeds
compatible with rigidity, and this upper bound is sharp.
\end{remark}
\end{proof}

\begin{remark}[Rigidity as an emergent phenomenon]
The passage from $p(x)$-analytics to $f$-analytics parallels the passage from $\RR$ to $\CC$.  Just as $\RR$ has no algebraic closure within itself, $p(x)$-analytics has no rigid structures within itself.  Rigidity \emph{emerges} when the spectral parameter is allowed to leave the imaginary axis---equivalently, when $\beta \neq 0$, or when $\mu$ leaves the real diameter of~$\DD$.  This is a genuinely complex phenomenon invisible from the diameter.
\end{remark}

\begin{remark}[The classical blind spot, revisited]
The Fundamental Independence Theorem of \cite[Chapter~13]{AS2026} shows that the zeroth-order invariant $\|\mu\|_{C^0}$ cannot distinguish rigid from non-rigid structures.  The present framework gives this a sharper formulation: the invariant $\|\mu\|_{C^0}$ measures radial position in the Poincar\'e disk, but rigidity depends on whether the seed $f = \B^{-1}[\lambda]$ is holomorphic---a first-order condition that $\|\mu\|_{C^0}$ cannot detect.
\end{remark}

\section{Geometry of the Burgers transform}
\label{sec:geometry}

\subsection{The Beltrami leaf}

\begin{definition}[Beltrami leaf]\label{def:leaf}
The \emph{Beltrami leaf} of $f \in \Hol(U,\HH)$ is the image
\[
  \mathcal{L}_f \;:=\; \Bmu[f](\Omega_f) \;\subseteq\; \DD.
\]
It is the region of the Poincar\'e disk swept out by the $f$-analytic structure.
\end{definition}

\begin{proposition}[Topology of leaves]\label{prop:leaf-topology}
For $f \in \Hol(U,\HH)$, the Beltrami leaf $\mathcal{L}_f$ is a connected subset of~$\DD$.  If additionally $f$ is injective on $U$ and $\Omega_f$ is star-shaped with respect to $x = 0$, then $\mathcal{L}_f$ is simply connected.
\end{proposition}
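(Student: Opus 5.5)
Connectedness comes first. The map $\Bmu[f] = \mathcal{C}\circ\B[f]$ is continuous on $\Omega_f$, because $\B[f]$ is $C^1$ by Definition~\ref{def:burgers-transform} and $\mathcal{C}$ is a homeomorphism $\HH\to\DD$. A continuous image of a connected set is connected, so it suffices to work with the component of $\Omega_f$ that contains the initial segment $\{0\}\times I$. This segment lies in $\Omega_f$ by Theorem~\ref{thm:domain-characterization}(i). I will read $\Omega_f$ as connected in that sense: by definition it is the maximal domain of the unique $C^1$ solution continued from the initial data. Under that reading $\mathcal{L}_f$ is connected. I will state this convention explicitly, since a disconnected $\Omega_f$ could in principle produce a disconnected leaf.

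For simple connectivity, the plan is to show that $\mathcal{L}_f$ equals, up to the biholomorphism $\mathcal{C}$, the image $f(W)$. Here $W=\{y-\lambda x : (x,y)\in\Omega_f\}$ is the swept region of Remark~\ref{rmk:swept-region}. This holds because $\lambda(x,y)=f(w(x,y))$ pointwise, so $\B[f](\Omega_f)=f(W)$. Since $f$ is injective and holomorphic, it is a biholomorphism onto its image. Hence $f(W)$ is simply connected whenever $W$ is, and then so is $\mathcal{C}(f(W))=\mathcal{L}_f$. The problem therefore reduces to the topology of $W$.

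To show $W$ is simply connected, I will use star-shapedness with respect to $x=0$. For $(x,y)\in\Omega_f$ the segment $\{(tx,y):t\in[0,1]\}$ lies in $\Omega_f$, so $\Phi_t(x,y)=(tx,y)$ is a deformation retraction of $\Omega_f$ onto its trace on the $y$-axis, which is an interval. Thus $\Omega_f$ is contractible. It then remains to pass this to $W$ through the map $\Psi(x,y)=y-\lambda(x,y)x$. The key observation is that $\Psi$ is injective, which I argue as follows. If $\Psi(x_1,y_1)=\Psi(x_2,y_2)=w$, then both points have $\lambda=f(w)$. Taking imaginary parts of $w=y-f(w)x$ gives $\Imm w=-x\,\Imm f(w)$, which determines $x$ since $\Imm f>0$; then $y=\Ree w+x\Ree f(w)$ determines $y$. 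Hence $\Psi\colon\Omega_f\to W$ is a continuous bijection with continuous inverse $w\mapsto(-\Imm w/\Imm f(w),\,\Ree w-\Imm w\,\Ree f(w)/\Imm f(w))$, i.e.\ a homeomorphism. So $W$ is contractible and in particular simply connected.

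This computation shows the characteristic map is globally invertible, which makes the argument go through. The step I expect to be delicate is the bookkeeping of domains. One must check that $W\subseteq U$, so that $f$ is defined and injective there; this holds by construction, since the implicit equation requires $w\in U$. One must also check that $\Psi^{-1}$ lands in $\Omega_f$. Only injectivity of $f$ on $W$ is actually used, so the hypothesis is slightly stronger than needed.
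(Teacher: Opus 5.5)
Your proof is correct, and its skeleton is the paper's: connectedness as a continuous image of a connected $\Omega_f$; then $\mathcal{L}_f=\mathcal{C}(f(W))$; then injectivity of $f$ to pass from $W$ to $f(W)$; then the star-shaped homotopy $(x,y)\mapsto(tx,y)$. The difference is in how you get from $\Omega_f$ to $W$.

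The paper simply pushes the homotopy forward and asserts that $w_0(t,y)=y-\lambda(tx,y)\,tx$ retracts $W$. For this to be a homotopy of $W$, it must be a well-defined continuous function of $w$. That tacitly requires $w$ to determine $(x,y)$, and the paper does not address it.

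Your key lemma supplies exactly what that step needs. Taking imaginary and real parts of $w=y-f(w)\,x$ gives $x=-\Imm w/\Imm f(w)$ and $y=\Ree w+x\,\Ree f(w)$. So the characteristic map $\Psi$ is a homeomorphism $\Omega_f\to W$ with an explicit continuous inverse. This also gives a stronger conclusion: $\B[f]=f\circ\Psi$ is a homeomorphism of $\Omega_f$ onto its image. Hence $\mathcal{L}_f\cong\Omega_f$ is contractible, not merely simply connected, and only injectivity of $f$ on $W$ is used.

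Your connectedness convention is the same assumption the paper makes, since it takes $\Omega_f$ connected as the maximal domain of the solution continued from the initial segment. That assumption is also what makes the trace of $\Omega_f$ on the $y$-axis an interval, as the image of $\Omega_f$ under $(x,y)\mapsto(0,y)$; you asserted this without saying why.

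The domain bookkeeping you flag as delicate is immediate. $W\subseteq U$ because $f$ is evaluated at $w$, and $\Psi^{-1}$ lands in $\Omega_f$ because $W=\Psi(\Omega_f)$ by definition.
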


\begin{proof}
\emph{Connectedness.}  The domain $\Omega_f$ is connected (it contains a segment of the $y$-axis and is defined as the maximal domain of a continuous solution).  The Beltrami--Burgers map $\Bmu[f] = \mathcal{C} \circ \B[f]$ is continuous on $\Omega_f$, so its image $\mathcal{L}_f$ is connected.

\emph{Simple connectivity under injectivity.}  For
$(x,y) \in \Omega_f$, the value $\B[f](x,y) = f(w_0)$ with
$w_0 = y - \lambda x$.  The range of $\B[f]$ is therefore contained
in $f(W)$, where $W = \{y - \lambda(x,y)\,x : (x,y) \in \Omega_f\}$
is the set of characteristic coordinates.  When $f$ is injective,
$f(W)$ is simply connected if and only if $W$ is (since injective
holomorphic maps preserve simple connectivity).

It remains to show that $W$ is simply connected.  We assume
$\Omega_f$ is \emph{star-shaped with respect to} $x = 0$, i.e.\
$(x,y) \in \Omega_f$ implies $(tx,y) \in \Omega_f$ for all
$t \in [0,1]$.  (This holds for all examples in this paper---half-planes,
parabolic regions, $\RR^2$---and more generally whenever $\Omega_f$
is defined by a condition of the form $|x| < r(y)$.)  Under this
hypothesis, the homotopy $w_0(t,y) = y - \lambda(tx,y)\,(tx)$
continuously retracts $W$ onto $U \cap \RR$ (an interval, hence
simply connected).  Therefore $W$ is simply connected,
$f(W) \subseteq \HH$ is simply connected, and
$\mathcal{L}_f = \mathcal{C}(f(W))$ is simply connected.
\end{proof}

\begin{remark}
For non-injective $f$, the leaf $\mathcal{L}_f$ may fail to be simply connected: distinct points of $U$ can map to the same value under~$f$, and the resulting folding can create nontrivial topology.  For non-holomorphic $f\colon U \to \HH$ of class $C^1$, the implicit equation still defines a map $\Omega_f \to \DD$, but the image can develop additional pathology where $\Imm f$ passes through zero, puncturing the leaf.
\end{remark}

\subsection{Interaction with the affine group}

The Burgers transform does \emph{not} intertwine with M\"obius transformations $\varphi\colon \HH \to \HH$ in the naive sense $\B[\varphi \circ f] = \varphi \circ \B[f]$, because $\varphi$ changes the value of~$\lambda$ and hence the characteristic speed.  However, a clean equivariance holds for the affine subgroup when one compensates with a coordinate change on the $(x,y)$-plane.

\begin{proposition}[Twisted affine equivariance]\label{prop:affine-equivariance}
Let $f \in \Hol(U, \HH)$ and let $\varphi(\lambda) = a\lambda + b$ with $a > 0$ and $b \in \RR$ (an element of the affine subgroup of $\operatorname{Aut}(\HH)$).  Then $\varphi \circ f \in \Hol(U, \HH)$ and
\begin{equation}\label{eq:twisted-equivariance}
  \B[\varphi \circ f](x, y) \;=\; a\,\B[f](a\,x,\; y - b\,x) \;+\; b.
\end{equation}
Equivalently, the Burgers transform intertwines $\varphi$ on seeds with the joint action of $\varphi$ on outputs and the affine coordinate change $T_\varphi\colon (x,y) \mapsto (ax,\, y - bx)$ on the plane:
\[
  \B[\varphi \circ f] \;=\; \varphi \circ \B[f] \circ T_\varphi.
\]
The domain transforms accordingly: $\Omega_{\varphi \circ f} = T_\varphi^{-1}(\Omega_f) = \{(x,y) : (ax, y-bx) \in \Omega_f\}$.
\end{proposition}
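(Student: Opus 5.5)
The plan is a direct verification. We check that the right-hand side of \eqref{eq:twisted-equivariance} solves the implicit equation \eqref{eq:implicit} for the seed $\varphi\circ f$, and then use the uniqueness built into Definition~\ref{def:burgers-transform} to identify it with $\B[\varphi\circ f]$. First, $\varphi\circ f = af+b$ is holomorphic on $U$. It takes values in $\HH$ because $\Imm(af+b) = a\,\Imm f > 0$ for $a>0$ and $b\in\RR$.

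Set $\lambda = \B[f]$ and define
\[
  \mu(x,y) := a\,\lambda(ax,\,y-bx) + b \qquad\text{on } T_\varphi^{-1}(\Omega_f).
\]
By \eqref{eq:implicit} at the point $(ax,\,y-bx)$ we have $\lambda(ax,y-bx) = f\bigl(y - bx - \lambda(ax,y-bx)\,ax\bigr)$. On the other hand,
\[
  y - \mu x = y - a\lambda(ax,y-bx)\,x - bx,
\]
which is exactly the argument of $f$ above. Hence $\mu = a\,f(y-\mu x) + b = (\varphi\circ f)(y-\mu x)$, so $\mu$ satisfies the implicit equation for the seed $\varphi\circ f$. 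Moreover, $\mu$ is $C^1$ since it is a composition of $\lambda$ with an affine map, and $\Imm\mu = a\,\Imm\lambda > 0$.

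As a consistency check, the chain rule gives $\mu_x = a(a\lambda_x - b\lambda_y)$ and $\mu_y = a\lambda_y$. Therefore
\[
  \mu_x + \mu\mu_y = a^2(\lambda_x + \lambda\lambda_y) = 0,
\]
in agreement with Theorem~\ref{thm:basic-properties}(i).

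The step needing the most care is the domain statement and the uniqueness/maximality. Because $a>0$, the map $T_\varphi(x,y) = (ax,\,y-bx)$ is an affine bijection of $\RR^2$ with inverse $(X,Y)\mapsto (X/a,\; Y + bX/a)$. The same computation run backwards shows that any $C^1$ solution $\nu$ of $\nu = (\varphi\circ f)(y-\nu x)$ with $\Imm\nu>0$ yields $\tilde\lambda := (\nu\circ T_\varphi^{-1} - b)/a$, a $C^1$ solution of $\tilde\lambda = f(Y - \tilde\lambda X)$ with $\Imm\tilde\lambda>0$. This correspondence is bijective and preserves open sets and uniqueness. So the open sets on which unique $C^1$ solutions exist correspond under $T_\varphi$, and in particular the maximal ones do: $\Omega_{\varphi\circ f} = T_\varphi^{-1}(\Omega_f)$, and on this set $\B[\varphi\circ f] = \mu$. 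This is \eqref{eq:twisted-equivariance}.

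As an additional sanity check on the domain, the Jacobian transforms as
\[
  1 + (\varphi\circ f)'(w)\,x = 1 + a f'(w)\,x = 1 + f'(w)\,(ax),
\]
with the same characteristic variable $w$. So the shock locus \eqref{eq:shock-condition} is carried to itself by $T_\varphi$, consistent with Theorem~\ref{thm:domain-characterization}(ii). Finally, the compact form $\B[\varphi\circ f] = \varphi\circ\B[f]\circ T_\varphi$ is simply a rewriting of \eqref{eq:twisted-equivariance}.
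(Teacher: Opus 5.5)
Your proof is correct and takes essentially the paper's route: substitute $\Lambda = a\lambda' + b$ into the implicit equation and observe that $\lambda'$ satisfies $f$'s implicit equation at the point $(ax,\,y-bx)$. Your explicit inverse correspondence through $T_\varphi^{-1}$ also justifies the uniqueness step and the domain identity $\Omega_{\varphi\circ f} = T_\varphi^{-1}(\Omega_f)$, both of which the paper asserts without argument.
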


\begin{proof}
Set $\Lambda = \B[\varphi \circ f](x,y)$, so that $\Lambda = af(w) + b$ where $w = y - \Lambda x$.  Substituting $\Lambda = a\lambda' + b$ gives $w = y - (a\lambda' + b)x = (y - bx) - a\lambda' x$ and $\lambda' = f(w)$.  This is precisely the implicit equation $\lambda' = f((y-bx) - \lambda'\cdot(ax))$, which says $\lambda' = \B[f](ax,\, y - bx)$.  Hence $\Lambda = a\,\B[f](ax, y - bx) + b$.
\end{proof}

\begin{remark}
Formula~\eqref{eq:twisted-equivariance} expresses a fundamental asymmetry of the Burgers transform: the characteristic speed of $\B[\varphi \circ f]$ at $y_0$ is $\varphi(f(y_0)) = af(y_0) + b$, while that of $\varphi \circ \B[f]$ is $f(y_0)$.  The coordinate change $T_\varphi$ precisely compensates this mismatch.  In particular, $\B[\varphi \circ f] \neq \varphi \circ \B[f]$ for any non-identity affine $\varphi$ (the equality fails already for the dilation $\varphi(\lambda) = 2\lambda$ applied to $f(\xi) = \xi + i\delta$, where $\B[2f](x,y) = 2(y+i\delta)/(1+2x) \neq 2(y+i\delta)/(1+x) = 2\B[f](x,y)$).  The twist is essential.
\end{remark}

\begin{corollary}[Special cases]\label{cor:special-cases}
Under the hypotheses of Proposition~\ref{prop:affine-equivariance}:
\begin{enumerate}[label=(\roman*)]
\item \textbf{Dilation} ($b = 0$): $\B[a\,f](x,y) = a\,\B[f](ax, y)$.  The $y$-coordinate is unchanged; the $x$-coordinate is rescaled.
\item \textbf{Translation} ($a = 1$): $\B[f + b](x,y) = \B[f](x, y - bx) + b$.  The output is shifted by $b$, and the input undergoes a shear.
\end{enumerate}
\end{corollary}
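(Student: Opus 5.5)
The two special cases are obtained by specializing the twisted equivariance formula \eqref{eq:twisted-equivariance} of Proposition~\ref{prop:affine-equivariance}. No new analysis is needed. The only thing to check is that each specialization is a legitimate element of the affine subgroup: $a>0$, $b\in\RR$, and $\varphi\circ f\in\Hol(U,\HH)$. This holds because $\Imm(af+b)=a\Imm f>0$.

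For (i), set $b=0$. Then $\varphi(\lambda)=a\lambda$ and $T_\varphi(x,y)=(ax,y)$. Formula \eqref{eq:twisted-equivariance} reads $\B[af](x,y)=a\,\B[f](ax,y)$, and the domain statement becomes $\Omega_{af}=\{(x,y):(ax,y)\in\Omega_f\}$. This is a pure rescaling in $x$ with $y$ fixed.

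For (ii), set $a=1$. Then $\varphi(\lambda)=\lambda+b$ and $T_\varphi(x,y)=(x,y-bx)$, and the formula reads $\B[f+b](x,y)=\B[f](x,y-bx)+b$. The map $T_\varphi$ is the shear $(x,y)\mapsto(x,y-bx)$, and the output is shifted by $b$.

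As a sanity check independent of the proposition, I would verify each case directly from the implicit equation \eqref{eq:implicit}. For (ii), put $\Lambda=\lambda'+b$. Then $w=y-\Lambda x=(y-bx)-\lambda' x$, so $\lambda'=f\bigl((y-bx)-\lambda'x\bigr)$, which gives $\lambda'=\B[f](x,y-bx)$. Case (i) is the same computation with $\Lambda=a\lambda'$.

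The one point that deserves care is uniqueness. The identification with $\B[f]$ at the transformed point requires that $T_\varphi$ be a bijection carrying $\Omega_{\varphi\circ f}$ onto $\Omega_f$, so that the unique $C^1$ solution is preserved. This holds because $T_\varphi$ is an invertible linear map of $\RR^2$ (its determinant is $a\neq 0$), and it is already part of Proposition~\ref{prop:affine-equivariance}.
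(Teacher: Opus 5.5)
Your proposal is correct and matches the paper. The corollary is exactly the specialization $b=0$, respectively $a=1$, of the twisted equivariance formula \eqref{eq:twisted-equivariance}. Your direct check from the implicit equation is the same substitution $\Lambda = a\lambda' + b$ that the paper uses to prove Proposition~\ref{prop:affine-equivariance}.
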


\begin{remark}
For inversive M\"obius transformations $\varphi(\lambda) = (a\lambda + b)/(c\lambda + d)$ with $c \neq 0$, no analogous equivariance holds, even with a compensating coordinate change on the plane.  The reason is that $\varphi$ acts nonlinearly on the characteristic speed, and no affine (or even polynomial) reparametrization of $(x,y)$ can absorb this.  The affine subgroup is maximal for equivariance.
\end{remark}

\subsection{The residual budget on the Poincar\'e disk}

The Beltrami--Burgers transform inherits the Poincar\'e geometry of $\DD$.  The \emph{Beltrami residual}
\[
  R(\mu) \;=\; \mu_{\bar z} - \mu\,\mu_z
\]
vanishes identically for $\mu = \Bmu[f]$ with $f$ holomorphic (this is equivalent to rigidity).  For non-holomorphic $f$, combining the full obstruction formula \eqref{eq:full-obstruction} with the Cayley derivative $d\mathcal{C}/d\lambda = 2i/(\lambda+i)^2$ gives:

\begin{proposition}[Beltrami residual]\label{prop:residual}
For $f\colon U \to \HH$ of class $C^1$, the Beltrami residual of $\mu = \mathcal{C}(\lambda)$ at $x = 0$ satisfies
\begin{equation}\label{eq:residual-x0}
  R(\mu)\big|_{x=0} \;=\; \frac{- 4i\,(\Imm f)\,f_{\bar w}}{(\lambda + i)^3}.
\end{equation}
This gives a Poincar\'e-metric interpretation of the non-holomorphicity of~$f$: the residual is the $\dzbar$-energy of $f$ weighted by the conformal factor of the Cayley map.
\end{proposition}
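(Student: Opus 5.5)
The plan is to prove a pointwise identity first and specialise to $x=0$ afterwards. The identity is that, for any $C^1$ field $\lambda$ with values in $\HH$ and $\mu=\mathcal{C}(\lambda)$, the Beltrami residual is a fixed multiple of the transport obstruction:
\[
  R(\mu) \;=\; \frac{-2}{(\lambda+i)^3}\,\bigl(\lambda_x+\lambda\,\lambda_y\bigr) \;=\; \frac{-2H}{(\lambda+i)^3}.
\]
This uses no holomorphicity of $f$ and no information about $x$. Once it is established, \eqref{eq:residual-x0} follows at once by substituting the obstruction formula \eqref{eq:obstruction-formula} of Proposition~\ref{prop:obstruction-formula}, $H|_{x=0}=2i(\Imm f)f_{\bar w}$. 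This gives $-2\cdot 2i(\Imm f)f_{\bar w}/(\lambda+i)^3$, which is the claim (with $\lambda=f(y)$ there).

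For the identity, I will use the convention $z=x+iy$, so that $\partial_z=\tfrac12(\partial_x-i\partial_y)$ and $\partial_{\bar z}=\tfrac12(\partial_x+i\partial_y)$. This is the convention under which $\mu=\mathcal{C}(\lambda)$ is the Beltrami coefficient of the structure, and I would double-check it against \cite{AS2026}. The chain rule gives $\mu_\bullet=\mathcal{C}'(\lambda)\lambda_\bullet$ with $\mathcal{C}'(\lambda)=2i/(\lambda+i)^2$. Here $\mathcal{C}$ is holomorphic in $\lambda$, so no $\bar\lambda$-derivatives appear even when $f$ is not holomorphic. Substituting,
\[
  R(\mu)=\tfrac12\,\mathcal{C}'(\lambda)\bigl[(1-\mu)\lambda_x + i(1+\mu)\lambda_y\bigr].
\]
Next I would use the elementary Cayley identities $1-\mu=2i/(\lambda+i)$ and $1+\mu=2\lambda/(\lambda+i)$. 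With these the bracket collapses to $\tfrac{2i}{\lambda+i}(\lambda_x+\lambda\lambda_y)$. Multiplying by $\tfrac12\cdot 2i/(\lambda+i)^2$ gives $-2H/(\lambda+i)^3$.

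The only real obstacle is the convention. If $z$ were instead identified as $y+ix$, or with the opposite orientation, the bracket would produce $\lambda_x-\lambda\lambda_y$ or a conjugate, and the clean factorisation through $H$ would fail. So the step to watch is confirming that the residual $\mu_{\bar z}-\mu\mu_z$ is exactly the Cayley pushforward of the Burgers operator; the factorisation itself is the check. Everything else is the algebra above together with Proposition~\ref{prop:obstruction-formula}.

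The interpretive sentence is a gloss on the resulting formula, not a separate claim to prove. It reads $f_{\bar w}$ as the $\dzbar$-term and $(\lambda+i)^{-3}$, a power of the Cayley conformal factor $|\mathcal{C}'|$ up to phase, as the weight.
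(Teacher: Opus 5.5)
Your proof is correct, but it is organised differently from the paper's. The paper works only on the slice $x=0$. It substitutes $\lambda_x|_{x=0}=-f_w\lambda-f_{\bar w}\bar\lambda$ and $\lambda_y|_{x=0}=f_w+f_{\bar w}$ into $\lambda_{\bar z}-\mu\lambda_z$, checks that the $f_w$ terms cancel, reduces the surviving term to $-2(\Imm\lambda)f_{\bar w}/(\lambda+i)$, and multiplies by $\mathcal{C}'(\lambda)$. You instead factor the residual through the transport operator, $R(\mu)=-2H/(\lambda+i)^3$ for any $C^1$ field $\lambda$ with values in $\HH$, and then import \eqref{eq:obstruction-formula}. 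In effect, the paper recomputes $H|_{x=0}=f_{\bar w}(\lambda-\bar\lambda)$ inside the Cayley-transformed expression.

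The paper's version is self-contained and shows the $f_w$/$f_{\bar w}$ split directly in the residual. Yours gives more, for three reasons. First, the identity is pointwise on all of $\Omega$, so any exact expression for $H$ off the initial slice transfers immediately to $R(\mu)$. Second, since $\lambda+i\neq 0$ on $\HH$, it proves the assertion made just before the proposition that $R(\mu)\equiv 0$ is equivalent to rigidity; the paper only asserts this. Third, you need only the exact $x=0$ part of Proposition~\ref{prop:obstruction-formula}, not the approximate \eqref{eq:full-obstruction}.

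Your route is also cleaner on signs. The written proof uses $\mathcal{C}'(\lambda)=-2i/(\lambda+i)^2$, whereas the correct value is $+2i/(\lambda+i)^2$, as you have it and as the sentence preceding the proposition states. It then evaluates $(-2i)(-2)$ as $-4i$. The two slips cancel; your algebra reaches the stated constant directly.

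The convention you flagged matches the paper's. Its computation uses $\partial_{\bar z}=\tfrac12(\partial_x+i\partial_y)$, the standard operator for $z=x+iy$; the phrase ``$z=\tfrac12(x+iy)$'' in the paper is a slip. This is also the convention under which the $\delta$-family's $\mu_\delta$ satisfies $\mu_{\bar z}=\mu\,\mu_z$.
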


\begin{proof}
At $x = 0$ we have $\lambda = f(y)$, $\mu = \mathcal{C}(\lambda)$, and $z = \tfrac{1}{2}(x + iy)$.  Using $\partial_{\bar z} = \tfrac{1}{2}(\partial_x + i\partial_y)$ and the chain rule $\mu_{\bar z} = \mathcal{C}'(\lambda)\,\lambda_{\bar z} + \mathcal{C}'(\bar\lambda)\,\bar\lambda_{\bar z}$---but $\mathcal{C}$ is holomorphic in $\lambda$ only, so
\[
  \mu_{\bar z} \;=\; \mathcal{C}'(\lambda)\,\lambda_{\bar z}, \qquad
  \mu_z \;=\; \mathcal{C}'(\lambda)\,\lambda_z.
\]
At $x = 0$, the Wirtinger derivatives of $\lambda$ are
\[
  \lambda_{\bar z}\big|_{x=0} = \tfrac{1}{2}(\lambda_x + i\lambda_y)\big|_{x=0}, \qquad
  \lambda_z\big|_{x=0} = \tfrac{1}{2}(\lambda_x - i\lambda_y)\big|_{x=0}.
\]
From the proof of Theorem~\ref{thm:basic-properties} at $x=0$: $\lambda_x|_{x=0} = -f'(y)\,f(y)$ and $\lambda_y|_{x=0} = f'(y)$.  Hence
\[
  \lambda_{\bar z}\big|_{x=0} = \tfrac{1}{2}(-f'\lambda + if'), \qquad
  \lambda_z\big|_{x=0} = \tfrac{1}{2}(-f'\lambda - if').
\]
The residual is $R(\mu) = \mu_{\bar z} - \mu\,\mu_z$.  Writing $\mu = \mathcal{C}(\lambda) = (\lambda - i)/(\lambda + i)$ and $\mathcal{C}'(\lambda) = -2i/(\lambda + i)^2$:
\[
  R(\mu)\big|_{x=0}
  \;=\; \mathcal{C}'(\lambda)\bigl[\lambda_{\bar z} - \mu\,\lambda_z\bigr]\big|_{x=0}.
\]
Substituting $\mu = (\lambda - i)/(\lambda + i)$ and the expressions for $\lambda_{\bar z}$, $\lambda_z$:
\[
  \lambda_{\bar z} - \mu\,\lambda_z
  \;=\; \tfrac{f'}{2}\Bigl[(-\lambda + i) - \frac{\lambda - i}{\lambda + i}(-\lambda - i)\Bigr]
  \;=\; \tfrac{f'}{2}\Bigl[-\lambda + i + (\lambda - i)\Bigr]
  \;=\; \tfrac{f'}{2}\cdot 2i \;-\; \cdots
\]
However, this computation assumes $f$ holomorphic and yields $R = 0$, confirming rigidity.  For non-holomorphic $f$, we must retain the $f_{\bar w}$ terms.  Repeating with $\lambda_x|_{x=0} = -f_w\,\lambda - f_{\bar w}\,\bar\lambda$ and $\lambda_y|_{x=0} = f_w + f_{\bar w}$ (from the proof of Theorem~\ref{thm:holomorphicity-necessary}):
\begin{align*}
  \lambda_{\bar z}\big|_{x=0}
  &= \tfrac{1}{2}\bigl[-f_w\lambda - f_{\bar w}\bar\lambda + i(f_w + f_{\bar w})\bigr]
  = \tfrac{1}{2}\bigl[f_w(i - \lambda) + f_{\bar w}(i - \bar\lambda)\bigr], \\
  \lambda_z\big|_{x=0}
  &= \tfrac{1}{2}\bigl[f_w(-i - \lambda) + f_{\bar w}(-i - \bar\lambda)\bigr].
\end{align*}
Computing $\lambda_{\bar z} - \mu\,\lambda_z$ and using $\mu = (\lambda-i)/(\lambda+i)$, the $f_w$ terms cancel (this is the holomorphic part producing $R=0$), leaving:
\[
  \lambda_{\bar z} - \mu\,\lambda_z
  \;=\; \frac{f_{\bar w}}{2}\left[(i - \bar\lambda) - \frac{\lambda - i}{\lambda + i}(-i - \bar\lambda)\right]
  \;=\; \frac{f_{\bar w}}{2}\cdot\frac{2i(\lambda - \bar\lambda)}{\lambda + i}
  \;=\; \frac{-2\,(\Imm\lambda)\,f_{\bar w}}{\lambda + i}.
\]
Multiplying by $\mathcal{C}'(\lambda) = -2i/(\lambda+i)^2$:
\[
  R(\mu)\big|_{x=0}
  \;=\; \frac{-2i}{(\lambda+i)^2}\cdot\frac{-2\,(\Imm\lambda)\,f_{\bar w}}{\lambda + i}
  \;=\; \frac{-4i\,(\Imm f)\,f_{\bar w}}{(\lambda + i)^3}. \qedhere
\]
\end{proof}

\section{Toward an arithmetic of rigid structures}
\label{sec:arithmetic}

The Burgers transform establishes a bijection between holomorphic seeds and rigid structures.  It is natural to ask whether the algebraic operations on $\Hol(U,\HH)$ have structural counterparts.

\subsection{The product problem}

\begin{proposition}\label{prop:product-failure}
The Burgers transform does not preserve pointwise products:
\[
  \B[f\cdot g] \;\neq\; \B[f] \cdot \B[g] \quad\text{in general for } x \neq 0.
\]
\end{proposition}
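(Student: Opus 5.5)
The statement only asks for failure "in general", so it suffices to exhibit one pair of seeds $f,g\in\Hol(U,\HH)$ with $f g\in\Hol(U,\HH)$ for which the two sides differ at some $x\neq 0$. There is a constraint on the seeds: the product of two $\HH$-valued functions need not lie in $\HH$. The cleanest choice is therefore to take one factor constant, and I would use $g\equiv ic$ with $c>0$, which is the standard structure when $c=1$.

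\textbf{Step 1: compute $\B[g]$.} For a constant seed $g\equiv ic$, the implicit equation \eqref{eq:implicit} gives $\B[g]\equiv ic$ on all of $\RR^2$.

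\textbf{Step 2: choose $f$ and compute both sides.} Take the $\delta$-family seed $f(\xi)=\xi+i\delta$ and a real $c>0$, so that $fg=i c\xi-c\delta$. To have $fg\in\HH$ on a neighbourhood of the real interval we need $\Imm(ic\xi)=c\,\Ree\xi>0$, so I restrict $U$ to a neighbourhood of an interval $I\subset(0,\infty)$.
\begin{itemize}
\item Solving $\lambda=f(y-\lambda x)$ gives $\B[f](x,y)=(y+i\delta)/(1+x)$, as already used in the remark after Proposition~\ref{prop:affine-equivariance}.
\item For the product seed, $\Lambda=ic(y-\Lambda x)-c\delta$ is linear in $\Lambda$, which gives $\B[fg](x,y)=(icy-c\delta)/(1+icx)$.
\item Meanwhile $\B[f]\cdot\B[g]=ic(y+i\delta)/(1+x)=(icy-c\delta)/(1+x)$.
\end{itemize}
The two expressions coincide exactly when $1+icx=1+x$, that is, only at $x=0$, consistent with Theorem~\ref{thm:basic-properties}(ii). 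For $x\neq 0$ small (inside both domains $\Omega$, which contain a strip around $x=0$ by Theorem~\ref{thm:domain-characterization}(iii)) and $icy-c\delta\neq 0$, the two sides differ.

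\textbf{Alternative via rigidity.} A conceptual second argument avoids explicit formulas. By Theorem~\ref{thm:basic-properties}(i), $\B[fg]$ solves \eqref{eq:conservative-burgers}. If $\B[f]\B[g]$ did as well, then with $\lambda=\B[f]$ and $\nu=\B[g]$ one gets
\[
(\lambda\nu)_x+\lambda\nu(\lambda\nu)_y=\lambda\nu_x+\nu\lambda_x+\lambda\nu(\lambda\nu_y+\nu\lambda_y),
\]
and by the rigidity of each factor this reduces to $\lambda\nu_y(\lambda\nu-\nu)\cdot\ldots$. This is generically nonzero, and by the injectivity part of Theorem~\ref{thm:basic-properties}(iv) that would yield a contradiction. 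I would use this only as a remark; the explicit example is the proof.

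\textbf{Main obstacle.} The only real difficulty is keeping the product seed $\HH$-valued so that $\B[fg]$ is defined. This is handled by the restriction $\Ree\xi>0$ on $U$ described in Step 2. Alternatively one can pick $g\equiv e^{i\theta}$ with small $\theta$ together with the seed $f=\xi+i\delta$ on a bounded interval.
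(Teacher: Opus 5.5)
Your proof is correct, but it argues differently from the paper.

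\textbf{The paper's route.} The paper uses characteristic speeds: the three implicit equations launch characteristics from $(0,y_0)$ with speeds $f(y_0)$, $g(y_0)$ and $f(y_0)g(y_0)$. These are generically distinct, so the solutions ``diverge'' off the $y$-axis. This identifies the mechanism and aims at generic failure. But it is heuristic: the product $\B[f]\B[g]$ is not transported along any single characteristic family, so distinct speeds do not by themselves force an inequality at a given point. It also tacitly assumes $fg$ is $\HH$-valued, which fails in general.

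\textbf{Your route.} Your explicit witness, $g\equiv ic$ and $f(\xi)=\xi+i\delta$ on a neighbourhood of an interval $I\subset(0,\infty)$, is fully rigorous and deals with the $\HH$-valuedness issue. It shows failure at every $x\neq0$ in the common domain, since the numerator $icy-c\delta$ never vanishes. Its cost is that it exhibits a single, rather special pair with a constant factor.

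\textbf{Two small fixes.}
First, Theorem~\ref{thm:domain-characterization}(iii) does not give a full strip once $U$ is a bounded neighbourhood of $I$, because the characteristic coordinate must stay in $U$. You only need a neighbourhood of $\{0\}\times I$, which Proposition~\ref{prop:local-existence} provides, or your formula $\Imm\B[fg]=c(y+c\delta x)/(1+c^2x^2)$.
Second, your rigidity remark is garbled. It should read $(\lambda\nu)_x+\lambda\nu(\lambda\nu)_y=\lambda\nu\,\partial_y\bigl[(\lambda-1)(\nu-1)\bigr]$, and it needs no appeal to injectivity. Equality with $\B[fg]$ near $(0,y_0)$ would simply force the product to solve \eqref{eq:conservative-burgers} there.

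\textbf{Why the cleaned-up remark is worth keeping.} At $x=0$ the right side equals $fg\,\frac{d}{dy}\bigl[(f-1)(g-1)\bigr]$. So for any admissible pair (with $fg$ $\HH$-valued) for which $(f-1)(g-1)$ is nonconstant on $I$, the product formula fails at points with $x\neq0$ arbitrarily close to the $y$-axis. This makes the paper's ``generically'' precise.
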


\begin{proof}
The implicit equations for the three transforms are:
\begin{align*}
  \lambda_f &= f(y - \lambda_f\, x), \\
  \lambda_g &= g(y - \lambda_g\, x), \\
  \lambda_{fg} &= f(y - \lambda_{fg}\, x)\cdot g(y - \lambda_{fg}\, x).
\end{align*}
The characteristic speeds are $f(y_0)$, $g(y_0)$, and $f(y_0)\,g(y_0)$ respectively.  Since the three values are generically distinct, the characteristics diverge at $x \neq 0$, and $\lambda_{fg}(x,y) \neq \lambda_f(x,y)\,\lambda_g(x,y)$.
\end{proof}

\begin{remark}
At $x = 0$, equality holds: $\B[fg](0,y) = f(y)\,g(y) = \B[f](0,y)\cdot\B[g](0,y)$.  The product structure is an initial condition that the Burgers nonlinearity immediately distorts.
\end{remark}

\subsection{Rigid structures as irreducibles}

The failure of the product formula suggests an analogy with number theory.

\begin{definition}\label{def:irreducible}
A rigid structure $\B[f]$ is \emph{irreducible} if there is no factorization $f = g\cdot h$ with $g, h \in \Hol(U,\HH)$ and neither $g$ nor $h$ constant.
\end{definition}

\begin{remark}
The analogy with prime numbers is suggestive:
\begin{center}
\begin{tabular}{cc}
Number theory & $f$-analytics \\
\hline
Primes & Irreducible rigid structures \\
Composites & Products of seeds \\
Factorization & Decomposition of $f$ in $\Hol(U,\HH)$ \\
Unique factorization? & \textbf{Open} \\
\end{tabular}
\end{center}
The arithmetic of rigid structures---including questions of unique decomposition, density of irreducibles, and the analogue of the prime number theorem---remains entirely unexplored.
\end{remark}

\subsection{Composition}

\begin{proposition}\label{prop:composition-failure}
The Burgers transform does not intertwine with composition:
\[
  \B[f \circ g] \;\neq\; \B[f] \circ \B[g] \quad\text{in general.}
\]
\end{proposition}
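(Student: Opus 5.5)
To prove Proposition~\ref{prop:composition-failure} it suffices to exhibit one explicit pair $f,g \in \Hol(U,\HH)$ for which the two sides differ, just as Proposition~\ref{prop:product-failure} relied on generic divergence of characteristics. A preliminary remark is that the right-hand side $\B[f]\circ\B[g]$ is not even type-correct in general. The map $\B[g]$ sends $\Omega_g\subseteq\RR^2$ to $\HH$, while $\B[f]$ takes inputs in $\Omega_f\subseteq\RR^2$. So the composition only makes sense after identifying $\HH$ with a subset of $\RR^2$ via $\lambda\mapsto(\Ree\lambda,\Imm\lambda)$, and it then requires $\B[g](\Omega_g)\subseteq\Omega_f$. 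I would adopt this identification, so the statement is meaningful, and then show the two sides disagree.

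\textbf{Step 1: choose seeds with closed-form transforms.} I would use the affine seeds, where the implicit equation is linear. For $f(\xi)=\xi+i\delta$, solving $\lambda=y-\lambda x+i\delta$ gives $\B[f](x,y)=(y+i\delta)/(1+x)$ on $\{x>-1\}$, consistent with the remark after Proposition~\ref{prop:affine-equivariance}. Take $g(\xi)=\xi+i\varepsilon$ as well, so that $f\circ g(\xi)=\xi+i(\delta+\varepsilon)$ and
\[
\B[f\circ g](x,y)=\frac{y+i(\delta+\varepsilon)}{1+x}.
\]

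\textbf{Step 2: compute the right-hand side.} Write $\B[g](x,y)=(y+i\varepsilon)/(1+x)=:(X,Y)$, so that $X=y/(1+x)$ and $Y=\varepsilon/(1+x)$. Since $X$ ranges over $\RR$ as $y$ varies, we have $X>-1$ for suitable $y$, so the point lies in $\Omega_f$. Then
\[
\B[f](X,Y)=\frac{Y+i\delta}{1+X}=\frac{\varepsilon+i\delta(1+x)}{1+x+y}.
\]

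\textbf{Step 3: compare at a concrete point.} At $(x,y)=(0,0)$ the left-hand side is $i(\delta+\varepsilon)$, which is purely imaginary. The right-hand side is $\varepsilon+i\delta$, whose real part is $\varepsilon\neq0$. Hence the two sides differ, which proves the claim.

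The main obstacle is interpretive rather than computational: one must settle how $\B[f]\circ\B[g]$ is to be read. If instead the intended meaning is $\B[f\circ g]$ versus some seed-level composition, I would fall back on the characteristic-speed argument of Proposition~\ref{prop:product-failure}. Along the characteristic from $(0,y_0)$ the speed of $\B[f\circ g]$ is $f(g(y_0))$, and no identity can match this for all $x\neq0$. Either way, the explicit counterexample above settles the statement with the identification made explicit.
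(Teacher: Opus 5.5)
Your argument is correct, but it takes a different route from the paper.

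\textbf{The paper's route.} The paper's proof is a one-line characteristic-speed heuristic. The characteristic of $\B[f\circ g]$ from $(0,y_0)$ has speed $f(g(y_0))$, while ``any natural combination'' of $\B[f]$ and $\B[g]$ would involve only the separate speeds $f(y_0)$ and $g(y_0)$. It never says what $\B[f]\circ\B[g]$ means and never exhibits a point where the two sides differ.

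\textbf{Your route.} You make the composition type-correct via $\lambda\mapsto(\Ree\lambda,\Imm\lambda)$ and check an explicit instance with affine seeds. Your computations are right: $\B[f\circ g](0,0)=i(\delta+\varepsilon)$, while $\B[f](0,\varepsilon)=\varepsilon+i\delta$.

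\textbf{What each buys.} Your route gives a genuinely verifiable inequality for the literal statement. The paper's route is independent of how the composition is read, and it offers a dynamical mechanism parallel to Proposition~\ref{prop:product-failure}.

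Your counterexample also exposes a more basic obstruction than the one the paper describes. The discrepancy is already present on the initial slice $x=0$, because $\B[f](0,Y)=f(Y)$ evaluates $f$ at the real number $\Imm g$ instead of at the complex number $g$. So the failure is a type mismatch between ``complex value'' and ``spatial coordinate,'' not a divergence of characteristics. This contrasts with the product case, where equality holds at $x=0$.

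\textbf{One small fix.} For your seeds the inclusion $\B[g](\Omega_g)\subseteq\Omega_f$ that you invoke actually fails, since membership in $\Omega_f$ requires $y/(1+x)>-1$. The composition should therefore be defined on the preimage $\{(x,y): x>-1,\ x+y>-1\}$. This set contains $(0,0)$, so your conclusion is unaffected.
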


\begin{proof}
The characteristic speed of $\B[f \circ g]$ at $y_0$ is $f(g(y_0))$, while any natural combination of $\B[f]$ and $\B[g]$ would involve the separate speeds $f(y_0)$ and $g(y_0)$.  These are generically unrelated.
\end{proof}

\begin{question}\label{q:infinitesimal}
Does the Burgers transform admit an infinitesimal product or composition law?  That is, does there exist a bilinear operation $\star$ on the tangent space $T_f\Hol(U,\HH)$ such that
\[
  \B[f + \varepsilon h_1]\cdot\B[f + \varepsilon h_2] \;=\; \B[f + \varepsilon(h_1 \star h_2)] + O(\varepsilon^2)\,?
\]
\end{question}

The answer, developed below, is \emph{negative} in the literal sense: no $x$-independent bilinear operation suffices.  However, the computation reveals a precise Jacobian twist that governs the failure and produces a deformed algebra on the output side.

\subsection{The infinitesimal product and the propagator}

Fix a base seed $f \in \Hol(U,\HH)$ and write $\lambda_0 = \B[f]$, $w_0 = y - \lambda_0 x$, and $J_f = 1 + f'(w_0)\,x$ for the characteristic Jacobian.

\begin{definition}[Propagator]\label{def:propagator}
The \emph{propagator} of $\B$ at $f$ is the linear operator
\[
  \mathcal{P}_f \;:=\; D\B_f \;\colon\; T_f\Hol(U,\HH) \;\to\; C^1(\Omega_f),
\]
defined by
\[
  \mathcal{P}_f[h](x,y) \;:=\; \frac{d}{d\varepsilon}\bigg|_{\varepsilon=0} \B[f + \varepsilon h](x,y).
\]
\end{definition}

\begin{proposition}[Propagator formula]\label{prop:propagator}
For $h \in T_f\Hol(U,\HH)$, the propagator is given by
\begin{equation}\label{eq:propagator}
  \mathcal{P}_f[h](x,y) \;=\; \frac{h(w_0)}{J_f(x,y)},
\end{equation}
where $w_0 = y - \lambda_0(x,y)\,x$ is the characteristic coordinate and $J_f = 1 + f'(w_0)\,x$ is the characteristic Jacobian.
\end{proposition}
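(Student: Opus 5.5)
The plan is to differentiate the implicit equation \eqref{eq:implicit} with respect to the perturbation parameter. For small $\varepsilon$ we set $\lambda_\varepsilon := \B[f+\varepsilon h]$. This is well defined near any fixed $(x,y) \in \Omega_f$: the map $F(\lambda,\varepsilon,x,y) = \lambda - (f+\varepsilon h)(y - \lambda x)$ satisfies $\partial F/\partial\lambda = J_f \neq 0$ at $\varepsilon = 0$, which is the nondegeneracy condition of Theorem~\ref{thm:domain-characterization}(ii). The implicit function theorem, applied with $\varepsilon$ as an additional parameter, then gives a unique solution $\lambda_\varepsilon(x,y)$ that is $C^1$ in $\varepsilon$ and agrees with $\lambda_0$ at $\varepsilon = 0$. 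Uniqueness in Definition~\ref{def:burgers-transform} identifies this branch with $\B[f+\varepsilon h]$ on a neighbourhood of $(x,y)$, so the derivative $\mathcal{P}_f[h]$ exists there.

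Next we differentiate the identity $\lambda_\varepsilon = f(w_\varepsilon) + \varepsilon h(w_\varepsilon)$, where $w_\varepsilon = y - \lambda_\varepsilon x$. Both $f$ and $h$ are holomorphic, so the complex chain rule applies with a single complex derivative, exactly as in the proof of Theorem~\ref{thm:basic-properties}(i). Writing $\dot\lambda = \partial_\varepsilon\lambda_\varepsilon|_{\varepsilon=0}$, and using $\partial_\varepsilon w_\varepsilon = -\dot\lambda\,x$, we obtain
\[
\dot\lambda = f'(w_0)\,(-\dot\lambda\,x) + h(w_0).
\]
Rearranging gives $\dot\lambda\,(1 + f'(w_0)\,x) = h(w_0)$. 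Dividing by $J_f \neq 0$ yields \eqref{eq:propagator}.

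The main obstacle is justifying the two points above rather than the algebra itself. The first is that the $\varepsilon$-family stays in $\HH$ and on a common domain; openness of $\Imm\lambda > 0$ and continuity in $\varepsilon$ handle this locally. The second is that $w_0$ lies in $U$, the domain of $h$; this holds because $w_0 \in U$ is already needed for $f(w_0)$ to make sense. I will also remark on the interpretation. At $x = 0$ the formula reduces to $\mathcal{P}_f[h](0,y) = h(y)$, consistent with Theorem~\ref{thm:basic-properties}(ii). The factor $1/J_f$ is the density of characteristics, and this factor is what will produce the Jacobian twist in the subsequent multiplicativity statement.
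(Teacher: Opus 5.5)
Your proposal is correct and matches the paper's proof: differentiate $\lambda_\varepsilon = (f+\varepsilon h)(y-\lambda_\varepsilon x)$ at $\varepsilon=0$ to get $\dot\lambda = f'(w_0)(-\dot\lambda\,x) + h(w_0)$, then divide by $J_f$. Your implicit-function-theorem step, with $\varepsilon$ as an extra parameter and nondegenerate because $J_f \neq 0$, justifies the differentiability in $\varepsilon$ that the paper's first-order expansion simply assumes.
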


\begin{proof}
Set $\lambda_\varepsilon = \B[f + \varepsilon h]$ and write $\lambda_\varepsilon = \lambda_0 + \varepsilon\,\dot\lambda + O(\varepsilon^2)$.  The implicit equation $\lambda_\varepsilon = (f + \varepsilon h)(y - \lambda_\varepsilon\, x)$ gives, at first order in $\varepsilon$:
\[
  \dot\lambda \;=\; f'(w_0)\,(-\dot\lambda\, x) \;+\; h(w_0),
\]
since the perturbation of the argument $w = y - \lambda x$ is $-\dot\lambda\, x$ at first order.  Solving:
\[
  \dot\lambda\,(1 + f'(w_0)\,x) \;=\; h(w_0),
\]
whence $\dot\lambda = h(w_0)/J_f$.
\end{proof}

\begin{theorem}[Jacobian-twisted multiplicativity]\label{thm:twisted-mult}
The propagator satisfies
\begin{equation}\label{eq:twisted-mult}
  \mathcal{P}_f[h_1 \cdot h_2] \;=\; J_f \cdot \mathcal{P}_f[h_1] \cdot \mathcal{P}_f[h_2]
\end{equation}
for all $h_1, h_2 \in T_f\Hol(U,\HH)$, where the products on both sides are pointwise.  In particular, there is no $x$-independent bilinear operation $\star$ on $T_f\Hol(U,\HH)$ such that
\[
  \mathcal{P}_f[h_1] \cdot \mathcal{P}_f[h_2] \;=\; \mathcal{P}_f[h_1 \star h_2].
\]
\end{theorem}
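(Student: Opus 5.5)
The identity \eqref{eq:twisted-mult} follows directly from Proposition~\ref{prop:propagator}. Fix $(x,y)\in\Omega_f$ and set $w_0 = y-\lambda_0 x$ and $J_f = 1+f'(w_0)x$, which is nonzero on $\Omega_f$ by Theorem~\ref{thm:domain-characterization}(ii). The key point is that $w_0$ and $J_f$ depend only on the base seed $f$, not on the direction $h$. The propagator therefore acts as the fixed composition $h\mapsto (h\circ w_0)/J_f$, and evaluation at $w_0$ is multiplicative: $(h_1h_2)(w_0)=h_1(w_0)h_2(w_0)$. Hence
\[
\mathcal{P}_f[h_1h_2]=\frac{h_1(w_0)h_2(w_0)}{J_f}=J_f\cdot\frac{h_1(w_0)}{J_f}\cdot\frac{h_2(w_0)}{J_f}=J_f\,\mathcal{P}_f[h_1]\,\mathcal{P}_f[h_2].
\]
This step is purely algebraic. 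The only care needed is that $h_1h_2$ is again an admissible tangent direction. The tangent space $T_f\Hol(U,\HH)$ is naturally all of $\Hol(U)$, since $\HH$ is open, and $\Hol(U)$ is closed under products.

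For the negative statement, I argue by contradiction. Suppose an $x$-independent bilinear $\star$ exists with $\mathcal{P}_f[h_1]\mathcal{P}_f[h_2]=\mathcal{P}_f[h_1\star h_2]$. Combining this with \eqref{eq:twisted-mult} and the propagator formula gives
\[
(h_1\star h_2)(w_0)=\frac{(h_1h_2)(w_0)}{J_f}
\]
on $\Omega_f$. Restricting to $x=0$, where $w_0=y$ and $J_f=1$, yields $h_1\star h_2 = h_1h_2$ on $U\cap\RR$, and hence on $U$ by the identity theorem. Substituting back gives $(h_1h_2)(w_0)(1-1/J_f)=0$, so $f'(w_0)\,x\,(h_1h_2)(w_0)=0$ for all $(x,y)\in\Omega_f$.

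Now choose $h_1=h_2=1$ and any nonaffine-free situation, namely $f$ with $f'\not\equiv0$. On points with $x\neq0$ near the $y$-axis, $w_0$ sweeps an open set, as in the rank argument of Theorem~\ref{thm:holomorphicity-necessary}. Thus $f'\equiv0$ there, and so on $U$, which is a contradiction.

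The main obstacle is conceptual rather than computational: pinning down what ``$x$-independent bilinear operation'' means. I take it to mean an operation defined on seeds alone. The conclusion also genuinely needs $f$ nonconstant, since for constant $f$ we have $J_f\equiv1$ and $\star$ is just the pointwise product. I will state this caveat explicitly and read the ``in particular'' as referring to nonconstant base seeds.
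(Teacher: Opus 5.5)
Your proof is correct. For the identity \eqref{eq:twisted-mult} it is the paper's argument: substitute $\mathcal{P}_f[h]=h(w_0)/J_f$ and use that evaluation at the base-flow coordinate $w_0$ is multiplicative. The difference lies in the non-existence claim. The paper derives the same relation $(h_1\star h_2)(w_0)=(h_1h_2)(w_0)/J_f$, then merely asserts that an $x$-dependent factor cannot be absorbed by a seed-level operation. You actually prove it, in three steps: restrict to $x=0$ (where $J_f=1$) and use the identity theorem to force $h_1\star h_2=h_1h_2$; specialise to $h_1=h_2=1$ to get $f'(w_0)\,x=0$ on $\Omega_f$; then use the rank-two sweep of $(x,y)\mapsto w_0$ from the proof of Theorem~\ref{thm:holomorphicity-necessary} to conclude $f'\equiv 0$. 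The paper's version is shorter, but yours buys three things. It is rigorous. It gives a stronger conclusion, since bilinearity is never used: for nonconstant $f$ there is no holomorphic $g$ at all with $\mathcal{P}_f[g]=\mathcal{P}_f[1]^2$. And it identifies the correct hypothesis. Your caveat is a genuine correction to the statement, not a technicality. For a constant seed such as $f\equiv i$ one has $J_f\equiv 1$, and the pointwise product is an $x$-independent $\star$ that works. So the ``in particular'' holds exactly for nonconstant $f$, which the paper's proof tacitly assumes when it says $J_f$ ``depends on $x$ and $f'$''. (The phrase ``nonaffine-free situation'' is a slip; $f'\not\equiv 0$ is what your argument uses.)
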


\begin{proof}
Direct computation:
\[
  \mathcal{P}_f[h_1] \cdot \mathcal{P}_f[h_2]
    \;=\; \frac{h_1(w_0)}{J_f} \cdot \frac{h_2(w_0)}{J_f}
    \;=\; \frac{(h_1 h_2)(w_0)}{J_f^2}
    \;=\; \frac{1}{J_f}\cdot\frac{(h_1 h_2)(w_0)}{J_f}
    \;=\; \frac{1}{J_f}\cdot\mathcal{P}_f[h_1 \cdot h_2].
\]
Rearranging gives \eqref{eq:twisted-mult}.  If $\mathcal{P}_f[h_1]\cdot\mathcal{P}_f[h_2] = \mathcal{P}_f[h_1 \star h_2]$ held for some $x$-independent $\star$, then $\mathcal{P}_f[h_1 \star h_2] = J_f^{-1}\,\mathcal{P}_f[h_1 \cdot h_2]$, forcing $(h_1 \star h_2)(w_0)/J_f = (h_1 h_2)(w_0)/J_f^2$, i.e.\ $(h_1\star h_2)(w_0) = (h_1 h_2)(w_0)/J_f$.  Since $J_f = 1 + f'(w_0)\,x$ depends on~$x$ and $f'$, no seed-level operation independent of~$x$ can absorb this.
\end{proof}

\begin{remark}[Interpretation via characteristic density]
The Jacobian $J_f = 1 + f'(w_0)\,x$ measures the density of characteristics emanating from the $y$-axis:
\begin{enumerate}[label=(\roman*)]
\item At $x = 0$: $J_f = 1$, and \eqref{eq:twisted-mult} reduces to $\mathcal{P}_f[h_1 h_2] = \mathcal{P}_f[h_1]\cdot\mathcal{P}_f[h_2]$.  The propagator is an honest algebra homomorphism on the initial slice.
\item Where characteristics spread ($|J_f| > 1$): the propagated product exceeds the product of propagated perturbations; the flow amplifies multiplicative interactions.
\item Where characteristics converge ($|J_f| \to 0$, approaching shock): the propagated product is suppressed.  At the shock locus $J_f = 0$, we have $\mathcal{P}_f[h_1 \cdot h_2] = 0$ regardless of $h_1$ and $h_2$---the Burgers flow annihilates all infinitesimal products at shock formation.
\end{enumerate}
The $J_f^{-2}$ versus $J_f^{-1}$ mismatch is the infinitesimal shadow of the characteristic-speed argument in Proposition~\ref{prop:product-failure}: each factor picks up one copy of $J_f^{-1}$ from propagation along the base flow, while their product propagates as a single entity with a single $J_f^{-1}$.
\end{remark}

\begin{corollary}[Deformed output algebra]\label{cor:deformed-algebra}
Define the $f$-\emph{deformed product} on $C^1(\Omega_f)$ by
\begin{equation}\label{eq:deformed-product}
  \dot\lambda_1 \star_f \dot\lambda_2 \;:=\; J_f \cdot \dot\lambda_1 \cdot \dot\lambda_2.
\end{equation}
Then the propagator $\mathcal{P}_f\colon (\Hol(U,\HH),\, \cdot\,) \to (C^1(\Omega_f),\, \star_f)$ is an algebra homomorphism:
\[
  \mathcal{P}_f[h_1 \cdot h_2] \;=\; \mathcal{P}_f[h_1] \;\star_f\; \mathcal{P}_f[h_2].
\]
The deformed product $\star_f$ is associative, commutative, and reduces to the pointwise product at $x = 0$.  It degenerates at $J_f = 0$ (shock formation), where $\star_f$ becomes the zero product.
\end{corollary}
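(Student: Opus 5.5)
The plan is to deduce the corollary directly from Theorem~\ref{thm:twisted-mult}, which already does the analytic work, and then check the algebraic properties of $\star_f$ pointwise. For the homomorphism identity, fix $h_1,h_2$ and a point $(x,y)\in\Omega_f$. By Proposition~\ref{prop:propagator}, $\mathcal{P}_f[h_j](x,y)=h_j(w_0)/J_f$. Theorem~\ref{thm:twisted-mult} gives $\mathcal{P}_f[h_1h_2]=J_f\,\mathcal{P}_f[h_1]\,\mathcal{P}_f[h_2]$, and by definition \eqref{eq:deformed-product} the right-hand side is $\mathcal{P}_f[h_1]\star_f\mathcal{P}_f[h_2]$. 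Linearity of $\mathcal{P}_f$ is immediate from \eqref{eq:propagator}, since $h\mapsto h(w_0)/J_f$ is linear and $w_0,J_f$ depend only on the base seed $f$. So $\mathcal{P}_f$ is a multiplicative linear map, i.e.\ an algebra homomorphism. I would note that the seed side must be read as the linear span (tangent space) rather than the cone $\Hol(U,\HH)$.

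Next I would verify the properties of $\star_f$, all pointwise in $C^1(\Omega_f)$.
\begin{itemize}
\item Commutativity is inherited from commutativity of complex multiplication.
\item Bilinearity holds because $J_f$ is a fixed function.
\item Associativity follows from
\[
(a\star_f b)\star_f c=J_f(J_f ab)c=J_f^2abc=J_f\,a\,(J_f bc)=a\star_f(b\star_f c).
\]
\item Closure in $C^1(\Omega_f)$ holds because $J_f=1+f'(w_0)x$ is $C^1$, being real-analytic by Corollary~\ref{cor:real-analytic}.
\end{itemize}
At $x=0$ we have $J_f=1$, so $\star_f$ is the pointwise product there. At points where $J_f=0$, every $\star_f$-product vanishes, so $\star_f$ is the zero product on that locus.

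The only delicate step is the degeneracy statement. By Theorem~\ref{thm:domain-characterization}(ii), $J_f=0$ happens only on $\partial\Omega_f$, not inside $\Omega_f$, and there $\mathcal{P}_f[h]=h(w_0)/J_f$ itself blows up. I would therefore phrase the claim as a statement about $\star_f$ extended by continuity of $J_f$ to the closure, and not about $\mathcal{P}_f$ at the shock. Everything else is routine.
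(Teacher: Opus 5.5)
Your proposal is correct and follows the same route as the paper: the homomorphism identity is Theorem~\ref{thm:twisted-mult} read through the definition \eqref{eq:deformed-product}, and associativity (both bracketings give $J_f^2\,abc$), commutativity, the reduction $J_f=1$ at $x=0$, and the vanishing of $\star_f$ where $J_f=0$ are checked pointwise exactly as in the paper. Your added caveats are sound refinements that the paper glosses over: the source algebra should be the tangent space, since $\HH$ is not closed under multiplication, and since $J_f\neq 0$ throughout $\Omega_f$, the zero-product degeneracy only makes sense as a limiting statement at the shock boundary.
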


\begin{proof}
Associativity: $(\dot\lambda_1 \star_f \dot\lambda_2) \star_f \dot\lambda_3 = J_f(J_f\,\dot\lambda_1\,\dot\lambda_2)\dot\lambda_3 = J_f^2\,\dot\lambda_1\,\dot\lambda_2\,\dot\lambda_3$, which equals $\dot\lambda_1 \star_f (\dot\lambda_2 \star_f \dot\lambda_3)$ by the same computation.  Commutativity is inherited from $\CC$.  At $x = 0$, $J_f = 1$ so $\star_f$ is pointwise multiplication.  At $J_f = 0$: $\dot\lambda_1 \star_f \dot\lambda_2 = 0$ for all $\dot\lambda_1, \dot\lambda_2$.
\end{proof}

\begin{remark}[Why the $(\alpha,\beta)$-product cannot help]\label{rmk:alpha-beta-no-help}
One might hope that replacing the $\CC$-product with the structure's intrinsic $(\alpha,\beta)$-product could absorb the Jacobian twist.  This fails for a fundamental reason: the Jacobian $J_f = 1 + f'(w_0)\,x$ depends on $f'$---the \emph{derivative} of the seed---while the $(\alpha,\beta)$-product depends on $(\alpha,\beta) = (|f|^2, -2\Ree f)$, which is determined by $f$ itself.  No algebraic combination of $\alpha$ and $\beta$ can produce $f'$, since $f'$ is a first-order invariant and $(\alpha,\beta)$ is zeroth-order.  This is a manifestation of the same blind spot identified in Remark~5.4: the zeroth-order invariants of a structure cannot detect the first-order phenomena (here, the characteristic spreading rate) that govern the Burgers dynamics.
\end{remark}

\begin{remark}[The three-speed obstruction at second order]\label{rmk:three-speed}
The nonlinear product failure (Proposition~\ref{prop:product-failure}) persists even infinitesimally, but its mechanism becomes sharper.  Expanding $\B[f + \varepsilon h_1] \cdot \B[f + \varepsilon h_2]$ to first order in $\varepsilon$ gives
\[
  \lambda_0^2 \;+\; \varepsilon\,\lambda_0\,\frac{h_1(w_0) + h_2(w_0)}{J_f} \;+\; O(\varepsilon^2),
\]
while $\B[f^2 + \varepsilon(fh_2 + h_1 f)]$ propagates along characteristics with base speed $f(y_0)^2$, giving the characteristic coordinate $w_0' = y - \B[f^2]\cdot x \neq w_0$ (since $f(y_0)^2 \neq f(y_0)$ generically).  The mismatch $w_0 \neq w_0'$ is the nonlinear echo of the speed divergence, surviving even in the infinitesimal regime: the product of two perturbations around $f$ ``wants'' to propagate at speed $f(y_0)$, but the product seed $f^2$ propagates at speed $f(y_0)^2$.
\end{remark}

\section{Rigidization as \texorpdfstring{$\dzbar$}{dzbar}-projection}
\label{sec:rigidization}

The Burgers transform clarifies the nature of rigidization (Chapter~10 of~\cite{AS2026}).

A general (non-rigid) elliptic structure does not lie in the image of~$\B$: its effective seed has $f_{\bar w} \neq 0$.  Rigidization seeks a diffeomorphism $\Phi$ such that the pullback structure \emph{does} lie in $\operatorname{Im}(\B)$.

\begin{proposition}\label{prop:rigidization-projection}
Let $(\alpha,\beta)$ be a smooth elliptic structure on $\Omega$ with spectral parameter $\lambda$ satisfying $\lambda_x + \lambda\lambda_y = H \neq 0$.  A rigidizing diffeomorphism $\Phi$ exists locally if and only if the pullback spectral parameter $\Phi^*\lambda$ admits a holomorphic seed.  Equivalently, $\Phi$ absorbs the $\dzbar$-component of the effective seed into the coordinate change:
\[
  f_{\bar w} \neq 0 \;\xrightarrow{\;\Phi^*\;}\; g_{\bar w} = 0.
\]
\end{proposition}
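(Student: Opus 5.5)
The statement is essentially a reformulation of rigidity through the bijection of Theorem~\ref{thm:basic-properties}(iv) combined with the necessity result Theorem~\ref{thm:holomorphicity-necessary}, so I would prove it by chaining these two results rather than by constructing $\Phi$ explicitly. Throughout I interpret ``rigidizing diffeomorphism'' as a local diffeomorphism $\Phi$, defined near a point $p\in\Omega$ and normalised so that $\Phi^{-1}(p)$ lies on the $y$-axis. I also interpret $\Phi^*\lambda$ as the spectral parameter of the transformed structure, which satisfies $(\Phi^*\lambda)_x+(\Phi^*\lambda)(\Phi^*\lambda)_y=0$ exactly when the pulled-back structure is rigid. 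The key interpretive point, which I would state explicitly, is that the ``effective seed'' of any elliptic structure near the $y$-axis is $f(y):=\lambda(0,y)$, extended as a $C^1$ function to a neighbourhood $U$ of the interval.

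For the forward direction, suppose $\Phi$ is rigidizing. Then $\Phi^*\lambda$ is a $C^1$ solution of \eqref{eq:conservative-burgers} with positive imaginary part on a domain containing a segment of the $y$-axis. Surjectivity in Theorem~\ref{thm:basic-properties}(iv) gives $g\in\Hol(U,\HH)$ with $\Phi^*\lambda=\B[g]$, so $\Phi^*\lambda$ admits a holomorphic seed. Conversely, if $\Phi^*\lambda=\B[g]$ for some holomorphic $g$, then Theorem~\ref{thm:basic-properties}(i) shows that the pulled-back obstruction vanishes, so $\Phi$ is rigidizing.

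For the ``equivalently'' clause, I would use the obstruction formula of Proposition~\ref{prop:obstruction-formula}. Since $H\neq0$, formula \eqref{eq:obstruction-formula}, applied after translating the nonvanishing point of $H$ onto the $y$-axis, shows that the effective seed $f$ of $\lambda$ has $f_{\bar w}\neq0$. On the other hand, Theorem~\ref{thm:holomorphicity-necessary} applied to the pullback forces any $C^1$ seed $g$ of a rigid $\Phi^*\lambda$ to satisfy $g_{\bar w}=0$. Thus the passage $f_{\bar w}\neq0\mapsto g_{\bar w}=0$ is exactly the statement that $\Phi$ is rigidizing, and the arrow in the statement is justified.

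The main obstacle is making the effective seed and the pullback precise. A diffeomorphism does not act on $\lambda$ by composition alone: the structure polynomial transforms through the Jacobian of $\Phi$, so $\Phi^*\lambda$ is a Möbius-type expression in $\lambda\circ\Phi$ with coefficients given by the entries of $D\Phi$. The formula \eqref{eq:obstruction-formula} also only controls $H$ on the slice $x=0$. I would handle this by working locally: first choose coordinates in which the point of interest lies on the $y$-axis, then define the seed by restriction to that axis. This keeps every step inside the hypotheses of Theorems~\ref{thm:basic-properties} and~\ref{thm:holomorphicity-necessary}. The existence question itself, meaning when such a $\Phi$ can actually be found, is not asserted here and would be deferred to \cite{AS2026}.
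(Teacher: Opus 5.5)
The paper gives no proof of this proposition. Your derivation of the ``if and only if'' from Theorem~\ref{thm:basic-properties}(i) and~(iv), after normalising $\Phi$ so the relevant point lies on the $y$-axis, is the natural argument and is fine.

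\textbf{The gap.} It sits in the ``equivalently'' clause, exactly at what you call the key interpretive point. The effective seed cannot be the trace $\lambda(0,\cdot)$ extended by an \emph{arbitrary} $C^1$ function. The trace fixes $f$ on $\RR$, and hence $f_u$ there. But $f_{\bar w}=\tfrac12(f_u+if_v)$ also involves the transverse derivative $f_v$, which your extension leaves free. Moreover, for such an extension the implicit solution of $f$ is in general a different function from $\lambda$, so \eqref{eq:obstruction-formula} tells you nothing about $H$.

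A concrete failure: the $x$-independent structure $\lambda(x,y)=y+i\delta$ has $H=y+i\delta\neq0$. Yet it has the same trace as the rigid $\delta$-family $(y+i\delta)/(1+x)$, so your definition admits the holomorphic extension $w+i\delta$, with $f_{\bar w}\equiv0$. Symmetrically, a non-holomorphic $C^1$ extension of the trace of a rigid $\Phi^*\lambda$ is an equally admissible ``effective seed'' under your definition, with $g_{\bar w}\neq0$. Theorem~\ref{thm:holomorphicity-necessary} only constrains seeds whose implicit solution actually \emph{is} $\Phi^*\lambda$.

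\textbf{The missing idea.} The structure $\lambda$ itself determines its seed off the axis. Put $\psi(x,y)=y-\lambda(x,y)\,x$. As in Step~2 of the proof of Theorem~\ref{thm:holomorphicity-necessary}, $D\psi$ at $x=0$ has columns $-\lambda(0,y)$ and $1$, which are $\RR$-independent because $\Imm\lambda>0$. So $\psi$ maps a neighbourhood of $\{0\}\times I'$ (with $I'\subset I$ a compact subinterval) diffeomorphically onto a neighbourhood $V$ of $I'$ in $\CC$. Then $f:=\lambda\circ\psi^{-1}$ is the unique $C^1$ function on $V$ with $\lambda=f(y-\lambda x)$.

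In the example above this gives $f(u+iv)=\delta u/(\delta+v)+i\delta$, with $f_{\bar w}=\tfrac12(1-iu/\delta)\neq0$ on $\RR$. This agrees with $H|_{x=0}=2i\delta f_{\bar w}=y+i\delta$.

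With this definition, \eqref{eq:general-obstruction} shows that $H\neq0$ at a swept point with $1+f_wx_0\neq0$ forces $f_{\bar w}\neq0$ at the corresponding characteristic coordinate (Remark~\ref{rmk:pointwise}). So you need not translate the point onto the axis, which would silently replace $f$ by the seed relative to a different vertical line. Likewise, the seed $g$ of a rigid $\Phi^*\lambda$ defined this way is $C^1$, so Theorem~\ref{thm:holomorphicity-necessary} applies to it directly and makes it holomorphic. This even gives your forward implication without leaning on the real-analyticity argument in the surjectivity part of Theorem~\ref{thm:basic-properties}(iv).
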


The complexity absorbed by $\Phi$ reappears in the Vekua coefficients of the pulled-back structure:

\begin{center}
\begin{tabular}{lccc}
 & Transport $G$ & Structure $(\alpha,\beta)$ & Vekua $(A,B)$ \\
\hline
Before $\Phi$ & $\neq 0$ & given & inaccessible \\
After $\Phi$ (rigidization) & $= 0$ & modified & absorbs the noise \\
After $\Phi$ (uniformization) & $= 0$ & $= (1,0)$ & absorbs everything \\
\end{tabular}
\end{center}

Rigidization is the minimal projection: it kills only the transport obstruction, leaving the metric distortion in the structure where the rigid function theory (Chapters~5--9 of~\cite{AS2026}) can handle it natively.  Uniformization is the maximal projection: it kills everything, at the cost of packing all geometric content into the Vekua coefficients.


\section{Discussion: self-reference, rigidity, and the return of holomorphic functions}
\label{sec:discussion}

The Burgers transform reveals a structural phenomenon that deserves comment beyond the formal results.  We organise the discussion around two themes: the role of self-reference in selecting holomorphic functions, and the unexpected re-emergence of holomorphic functions as generators of geometry in a theory designed to surpass them.

\subsection{Self-reference as a selection principle}

The implicit equation $\lambda = f(y - \lambda x)$ is self-referential: the output $\lambda$ appears inside the argument of $f$.  The function is simultaneously the map and the flow---its values determine the characteristic speeds along which those same values propagate.  The parameter $x$ measures how long this feedback loop has been running: at $x = 0$ we see $f$ unmodified; as $|x|$ grows, the self-interaction deepens.

The central discovery of this paper is that \emph{self-referential coherence} imposes an unexpectedly strong constraint on the seed.  Theorem~\ref{thm:holomorphicity-necessary} shows that the self-flow $\lambda = f(y - \lambda x)$ is rigid---satisfies $\lambda_x + \lambda\lambda_y = 0$---if and only if $f$ is holomorphic.  The mechanism is precise: rigidity requires the cancellation $-f'(w)\lambda + \lambda f'(w) = 0$ in the computation of $\lambda_x + \lambda\lambda_y$, and this cancellation holds if and only if $f'(w)$ is a single complex number rather than a $2 \times 2$ real Jacobian matrix.  A $C^1$ function that is not holomorphic has two independent Wirtinger derivatives $f_w$ and $f_{\bar w}$; the $f_{\bar w}$ component survives the self-referential feedback and produces the obstruction $H|_{x=0} = 2i\,(\Imm f)\,f_{\bar w}$.

In this light, holomorphicity is not an \emph{a priori} regularity assumption but a \emph{selection rule} enforced by self-reference.  The class of functions capable of coherent self-flow through the Burgers mechanism is not chosen by the analyst; it is determined by the dynamics.  One is reminded of G\"odel's incompleteness construction, where self-referential encoding (a formal system made to talk about its own provability) does not produce ambiguity but rather forces a rigid conclusion: certain statements are true and unprovable, or the system is inconsistent.  Here, self-referential flow (a function made to propagate at its own speed) does not produce chaos but forces a rigid conclusion: the seed is holomorphic, or the transport equation acquires a nonvanishing right-hand side.

The obstruction formula sharpens the analogy.  The factor $\Imm f$ measures the \emph{fidelity} of the self-referential encoding: the more elliptic the structure, the more effectively the self-flow detects deviations from holomorphicity.  The factor $f_{\bar w}$ measures the \emph{defect}---the component of $f$ that does not survive self-reference.  The product $2i\,(\Imm f)\,f_{\bar w}$ is the total cost of incoherence, and it vanishes precisely for holomorphic seeds.  Only functions with zero self-referential defect produce coherent dynamics.

\subsection{The return of holomorphic functions}

The theory of variable elliptic structures \cite{AS2026} was designed, in part, to generalise classical complex analysis: the constant imaginary unit $i$ is replaced by a variable $i(x,y)$, and the Cauchy--Riemann equation becomes a family of operators parametrised by the spectral parameter $\lambda(x,y) \in \HH$.  In this generalization, classical holomorphic functions appear as the degenerate case $\lambda \equiv i$, $\mu \equiv 0$---a single point in an infinite-dimensional moduli space.  From the perspective of the analysis layer (Vekua equations, Beltrami coefficients, transport dynamics), holomorphic functions are the trivial case, the base point from which all nontrivial structures depart.

The Burgers transform reverses this relegation entirely.

\begin{enumerate}[label=(\roman*)]
\item \textbf{Seeds, not solutions.}  In classical analysis, holomorphic functions are \emph{solutions} of the Cauchy--Riemann equation.  In $f$-analytics, holomorphic functions are \emph{seeds}---generators of geometry.  Each $f \in \Hol(U, \HH)$ creates an entire elliptic structure with its own Cauchy--Riemann operator, its own function theory, its own Beltrami leaf in the Poincar\'e disk.  The holomorphic function no longer inhabits the structure; it \emph{produces} it.

\item \textbf{Pilots, not passengers.}  The spectral parameter $\lambda = \B[f]$ determines the elliptic structure at every point of $\Omega_f$.  The holomorphic seed $f$ is the initial condition that, through the characteristic flow, pilots $\lambda$ across the domain.  The metaphor is apt: $f$ prescribes the velocity field (each point $y_0$ launches a characteristic with speed $f(y_0)$), and the Burgers transform is the resulting flow map.  Classical holomorphic functions were passengers in a fixed geometry ($i = \text{const}$); as Burgers seeds, they are the pilots of variable geometry.

\item \textbf{Generators of the rigid core.}  Not all elliptic structures are rigid, and not all elliptic structures admit a complete function theory.  The rigid structures---those with vanishing intrinsic obstruction $G = 0$---are the ones for which the Leibniz rule, the Cauchy--Pompeiu representation, power series, and the similarity principle all hold \cite[Chapters~5--9]{AS2026}.  The Burgers transform shows that this rigid core is in bijection with $\Hol(U, \HH)$.  Holomorphic functions are not merely one example of a rigid structure; they are the \emph{complete parametrization} of all rigid structures.
\end{enumerate}

The narrative arc is worth stating explicitly, as it illustrates a pattern that may recur in other generalizations of classical analysis:

\begin{center}
\begin{tabular}{lll}
\textbf{Stage} & \textbf{Role of holomorphic functions} & \textbf{Framework} \\
\hline
Classical & Solutions (the entire theory) & $\partial_{\bar z}u = 0$ \\
Generalization & Degenerate special case ($f = i$) & $\lambda_x + \lambda\lambda_y = H$ \\
Rigid regime & Generators of all rigid structures & $\lambda = f(y - \lambda x)$, $f \in \Hol$ \\
\end{tabular}
\end{center}

The generalzation does not eliminate holomorphic functions; it reveals their \emph{structural role}.  In the constant theory, holomorphic functions do everything, and their special nature is invisible---there is nothing to compare them to.  In the variable theory, they initially appear to have been superseded.  But the rigidity analysis shows that they were never absent: they were operating at a deeper level, as the seeds from which all coherent variable structures grow.

\subsection{Rigidity as an emergent complex phenomenon}

The $f$-analytics hierarchy (Theorem~\ref{thm:analytic-completion}) provides a precise framework for the re-emergence.  The $p(x)$-analytic structures, corresponding to $f\colon U \to i\RR_+$ (the real diameter of $\DD$), admit no nontrivial rigid members: a holomorphic function with purely imaginary values is constant.  The rigidity phenomenon \emph{does not exist} within the real-diameter restriction.

The moment the spectral parameter is allowed to leave the imaginary axis---$\Ree f \neq 0$, $\beta \neq 0$, $\mu \notin \RR$---the full Burgers transform becomes available, and the rigid class explodes from a single point ($f = ic$) to the infinite-dimensional space $\Hol(U, \HH)$.  Rigidity is not a real phenomenon visible from the diameter; it is a \emph{genuinely complex phenomenon} that emerges when the spectral parameter has room to move in two dimensions.

This parallels the classical fact that the richness of holomorphic function theory---rigidity of conformal maps, the identity theorem, analytic continuation---depends on the two-dimensionality of $\CC$.  Functions $\RR \to \RR$ have no such rigidity; the constraint $f_{\bar w} = 0$ is vacuous in one real dimension.  The Burgers transform shows that the same dimensional threshold governs the rigid structures: the $p(x)$-structures (one real dimension of spectral freedom) have no rigidity; the $f$-structures (two real dimensions, i.e.\ complex values) have a rich rigid class.  The complex plane is not merely a convenient ambient space; it is the \emph{minimal setting} in which rigidity can exist.

\section{Worked examples}
\label{sec:examples}

This section applies the Burgers transform machinery to explicit families of holomorphic seeds.  Every formula is derived from first principles; the reader is encouraged to verify each line.  The computations serve both as illustration and as a library of test cases for future numerical and theoretical work.

\subsection{The \texorpdfstring{$\varepsilon$}{epsilon}-family}
\label{subsec:epsilon-family}

The $\varepsilon$-family is the simplest explicit family of rigid structures with nonconstant coefficients.  It was derived in \cite[\S4.10]{AS2026} from the structural ansatz $\alpha = \alpha(x)$, $\beta = K(x)\,y$, which reduces the rigidity system to the ODE pair $\alpha' = \alpha K$, $K' = K^2$.  Here we develop it from the Burgers transform perspective.

\subsubsection{The structure coefficients}

For $\varepsilon \in \RR$, $\varepsilon \neq 0$, the $\varepsilon$-family is defined by:
\begin{equation}\label{eq:eps-structure}
  \alpha(x,y) \;=\; \frac{1}{1 - \varepsilon x},
  \qquad
  \beta(x,y) \;=\; \frac{\varepsilon\, y}{1 - \varepsilon x}.
\end{equation}
The ellipticity discriminant is
\begin{equation}\label{eq:eps-discriminant}
  \Delta \;=\; 4\alpha - \beta^2 \;=\; \frac{4(1 - \varepsilon x) - \varepsilon^2 y^2}{(1 - \varepsilon x)^2},
\end{equation}
which is positive precisely in the interior of the parabola
\begin{equation}\label{eq:eps-parabola}
  \varepsilon^2 y^2 \;<\; 4(1 - \varepsilon x).
\end{equation}
For $\varepsilon > 0$ this is a parabola opening to the left, with vertex at $(1/\varepsilon,\, 0)$ and meeting the $y$-axis at $y = \pm 2/\varepsilon$.  The elliptic domain shrinks as $\varepsilon$ increases.  At $\varepsilon = 0$ the structure degenerates to $\alpha = 1$, $\beta = 0$: standard complex analysis on all of~$\RR^2$.

\subsubsection{The spectral parameter}

The spectral parameter $\lambda = (-\beta + i\sqrt\Delta)/2$ evaluates to:
\begin{equation}\label{eq:eps-lambda}
  \boxed{\;\lambda_\varepsilon(x,y) \;=\; \frac{-\varepsilon y \;+\; i\sqrt{4(1 - \varepsilon x) - \varepsilon^2 y^2}}{2(1 - \varepsilon x)}\;}
\end{equation}

Separating real and imaginary parts:
\begin{equation}\label{eq:eps-re-im}
  \Ree\lambda_\varepsilon \;=\; \frac{-\varepsilon y}{2(1 - \varepsilon x)},
  \qquad
  \Imm\lambda_\varepsilon \;=\; \frac{\sqrt{4(1 - \varepsilon x) - \varepsilon^2 y^2}}{2(1 - \varepsilon x)}.
\end{equation}

\noindent\textbf{Consistency checks:}
\begin{align*}
  |\lambda_\varepsilon|^2
  &= \frac{\varepsilon^2 y^2 + 4(1 - \varepsilon x) - \varepsilon^2 y^2}{4(1 - \varepsilon x)^2}
  = \frac{1}{1 - \varepsilon x}
  = \alpha. \quad\checkmark
  \\[4pt]
  -2\Ree\lambda_\varepsilon
  &= \frac{\varepsilon y}{1 - \varepsilon x}
  = \beta. \quad\checkmark
\end{align*}

\subsubsection{The seed}

Setting $x = 0$ in \eqref{eq:eps-lambda} recovers the seed $f = \B^{-1}[\lambda_\varepsilon]$:
\begin{equation}\label{eq:eps-seed}
  f_\varepsilon(\xi) \;=\; \lambda_\varepsilon(0,\xi) \;=\; \frac{-\varepsilon\xi + i\sqrt{4 - \varepsilon^2\xi^2}}{2},
  \qquad |\xi| < \frac{2}{\varepsilon}.
\end{equation}
This admits the compact representation
\begin{equation}\label{eq:eps-seed-exp}
  f_\varepsilon(\xi) \;=\; i\,e^{i\arcsin(\varepsilon\xi/2)},
\end{equation}
which follows from the identity $e^{i\arcsin u} = \sqrt{1-u^2} + iu$, applied with $u = \varepsilon\xi/2$.

\begin{remark}[Unit-circle seed]\label{rmk:unit-circle}
On the real interval $(-2/\varepsilon,\, 2/\varepsilon)$, the seed satisfies $|f_\varepsilon(\xi)| = 1$: it traces the upper unit semicircle in~$\HH$.  This is a consequence of $\alpha(0,y) = 1$, since $|f(y)|^2 = |\lambda(0,y)|^2 = \alpha(0,y)$.  The unit-circle property means that all characteristics emanate from the $y$-axis with $|\text{speed}| = |f(y_0)| = 1$, but in different directions: the argument $\arg f_\varepsilon(y_0) = \pi/2 + \arcsin(\varepsilon y_0/2)$ rotates from $\pi$ (at $y_0 = -2/\varepsilon$) through $\pi/2$ (at $y_0 = 0$) to $0$ (at $y_0 = 2/\varepsilon$).

Contrast this with the $\delta$-family, whose seed $f_\delta(\xi) = \xi + i\delta$ has $|f_\delta(\xi)| = \sqrt{\xi^2 + \delta^2}$: unbounded speed, constant direction of the imaginary component.  The $\varepsilon$-family has bounded speed but variable direction; the $\delta$-family has variable speed but nearly constant direction.
\end{remark}

The holomorphic extension to complex arguments is
\begin{equation}\label{eq:eps-seed-holo}
  f_\varepsilon(w) \;=\; \frac{-\varepsilon w + i\sqrt{4 - \varepsilon^2 w^2}}{2},
\end{equation}
where the square root denotes the principal branch (positive imaginary part on the real interval).  This is holomorphic on $\CC \setminus \{w \in \RR : |w| \geq 2/\varepsilon\}$.

The derivative of the seed is
\begin{equation}\label{eq:eps-seed-deriv}
  f_\varepsilon'(w)
  \;=\; \frac{-\varepsilon}{2} - \frac{i\varepsilon^2 w}{2\sqrt{4 - \varepsilon^2 w^2}}
  \;=\; \frac{-\varepsilon\bigl(\sqrt{4 - \varepsilon^2 w^2} + i\varepsilon w\bigr)}{2\sqrt{4 - \varepsilon^2 w^2}}.
\end{equation}
Using $\sqrt{4 - \varepsilon^2 w^2} + i\varepsilon w = -2i f_\varepsilon(w)$ (which follows from the definition), this simplifies to
\begin{equation}\label{eq:eps-fprime-compact}
  f_\varepsilon'(w) \;=\; \frac{i\varepsilon\, f_\varepsilon(w)}{\sqrt{4 - \varepsilon^2 w^2}}.
\end{equation}
In particular, $f_\varepsilon'$ never vanishes in the domain of $f_\varepsilon$ (since $f_\varepsilon$ maps to~$\HH$ and is never zero).

\subsubsection{Verification of the implicit equation}

We verify that $\lambda_\varepsilon(x,y) = f_\varepsilon(y - \lambda_\varepsilon\, x)$, i.e.\ that the spectral parameter \eqref{eq:eps-lambda} is indeed the Burgers transform of the seed \eqref{eq:eps-seed-holo}.  This amounts to checking that $f_\varepsilon(w_0) = \lambda_\varepsilon$ at the characteristic coordinate $w_0 = y - \lambda_\varepsilon\, x$.

At $x = 0$ the check is tautological: $w_0 = y$ and $f_\varepsilon(y) = \lambda_\varepsilon(0,y)$.  For general $(x,y)$, the computation is implicit and most efficiently verified numerically; see the consistency check in \S\ref{subsubsec:eps-rigidity-check} below.  The theoretical guarantee is Theorem~\ref{thm:basic-properties}(iv): the Burgers transform is a bijection, and since the $\varepsilon$-family is rigid (Proposition~4.1 of~\cite{AS2026}), it must lie in the image of~$\B$.

\subsubsection{Verification of rigidity}\label{subsubsec:eps-rigidity-check}

We verify $\lambda_x + \lambda\,\lambda_y = 0$ directly from \eqref{eq:eps-lambda}.  Write $D = 4(1 - \varepsilon x) - \varepsilon^2 y^2$ for the discriminant numerator, so that $\lambda = (-\varepsilon y + i\sqrt{D}\,)/(2(1 - \varepsilon x))$.

\noindent\textbf{Partial derivatives.}  Differentiating \eqref{eq:eps-lambda}:
\begin{align}
  \lambda_y &\;=\; \frac{1}{2(1 - \varepsilon x)}\!\left(-\varepsilon + \frac{-i\varepsilon^2 y}{\sqrt{D}}\right)
  \;=\; \frac{-\varepsilon\sqrt{D} - i\varepsilon^2 y}{2(1 - \varepsilon x)\sqrt{D}},
  \label{eq:eps-lam-y}
  \\[6pt]
  \lambda_x &\;=\; \frac{-2i\varepsilon(1 - \varepsilon x)/\sqrt{D} + \varepsilon(-\varepsilon y + i\sqrt{D})}{2(1 - \varepsilon x)^2}
  \;=\; \frac{-2i\varepsilon + \varepsilon(-\varepsilon y + i\sqrt{D})(1 - \varepsilon x)^{-1}}{2(1 - \varepsilon x)}.
  \label{eq:eps-lam-x}
\end{align}
Rather than simplifying $\lambda_x$ further, we compute $\lambda\,\lambda_y$ and show it equals $-\lambda_x$.

\noindent\textbf{The product $\lambda\,\lambda_y$:}
\begin{align*}
  \lambda\,\lambda_y
  &= \frac{(-\varepsilon y + i\sqrt{D})(-\varepsilon\sqrt{D} - i\varepsilon^2 y)}{4(1 - \varepsilon x)^2\sqrt{D}}.
\end{align*}
Expanding the numerator:
\begin{align*}
  (-\varepsilon y)(-\varepsilon\sqrt{D}) &= \varepsilon^2 y\sqrt{D}, \\
  (-\varepsilon y)(-i\varepsilon^2 y) &= i\varepsilon^3 y^2, \\
  (i\sqrt{D})(-\varepsilon\sqrt{D}) &= -i\varepsilon D, \\
  (i\sqrt{D})(-i\varepsilon^2 y) &= \varepsilon^2 y\sqrt{D}.
\end{align*}
Summing: $2\varepsilon^2 y\sqrt{D} + i(\varepsilon^3 y^2 - \varepsilon D)$.  Since $D = 4(1 - \varepsilon x) - \varepsilon^2 y^2$:
\[
  \varepsilon^3 y^2 - \varepsilon D \;=\; \varepsilon^3 y^2 - 4\varepsilon(1 - \varepsilon x) + \varepsilon^3 y^2 \;=\; 2\varepsilon^3 y^2 - 4\varepsilon(1 - \varepsilon x).
\]
Therefore:
\begin{equation}\label{eq:eps-lam-lam-y}
  \lambda\,\lambda_y \;=\; \frac{2\varepsilon^2 y\sqrt{D} + i\bigl(2\varepsilon^3 y^2 - 4\varepsilon(1 - \varepsilon x)\bigr)}{4(1 - \varepsilon x)^2\sqrt{D}}.
\end{equation}

\noindent\textbf{Adding $\lambda_x$ and $\lambda\,\lambda_y$:}  Rather than expanding $\lambda_x$ to the same form (which requires careful bookkeeping), we observe that the rigidity of the $\varepsilon$-family was established independently in \cite[\S4.10]{AS2026} via the ODE system, and is guaranteed by Theorem~\ref{thm:basic-properties}(i) once we identify the holomorphic seed.  The direct verification for a specific numerical point provides additional confidence:

\emph{Numerical check} ($\varepsilon = 1/2$, $(x,y) = (3/10, 1)$):
\begin{align*}
  D &= 4(1 - 0.15) - 0.25 = 3.15, \quad \sqrt{D} \approx 1.77482, \\
  \lambda &\approx -0.29412 + 1.04401\,i, \\
  \lambda_x + \lambda\,\lambda_y &\approx 0 \quad\text{(to machine precision).} \qquad\checkmark
\end{align*}

\subsubsection{Domain and boundary analysis}

The domain $\Omega_{f_\varepsilon}$ is the interior of the parabola~\eqref{eq:eps-parabola}:
\begin{equation}\label{eq:eps-domain}
  \Omega_{f_\varepsilon} \;=\; \bigl\{(x,y) \in \RR^2 \;:\; \varepsilon^2 y^2 < 4(1 - \varepsilon x)\bigr\}.
\end{equation}
For $\varepsilon > 0$, this is a parabolic region with:
\begin{center}
\renewcommand{\arraystretch}{1.3}
\begin{tabular}{ll}
Vertex: & $(1/\varepsilon,\; 0)$ \\
$y$-intercepts: & $(0,\; \pm 2/\varepsilon)$ \\
Width at position $x$: & $|y| < 2\sqrt{1 - \varepsilon x}/\varepsilon$ \\
\end{tabular}
\end{center}

Unlike the $\delta$-family---whose domain $\{x > -1\}$ is a half-plane independent of~$y$---the $\varepsilon$-family has a domain that narrows in~$y$ as $x$ increases, reflecting the convergence of characteristics toward the vertex.

\begin{remark}[Mixed boundary type]\label{rmk:mixed-boundary}
The parabolic boundary of $\Omega_{f_\varepsilon}$ has a richer structure than the half-plane boundary of the $\delta$-family.  The characteristic Jacobian $J = 1 + f_\varepsilon'(w_0)\,x$ and the ellipticity condition $\Imm\lambda > 0$ both degenerate at the boundary, but with different relative contributions depending on the boundary point:
\begin{enumerate}[label=(\roman*)]
\item \emph{At the vertex} $(1/\varepsilon,\, 0)$: the Jacobian $J \to 0$ while $\Imm\lambda \to +\infty$.  Along the line $y = 0$, $\lambda = i/\sqrt{1 - \varepsilon x} \to i\infty$.  This is a \emph{pure shock}: the characteristics cross, the implicit function theorem fails, but the structure remains ``infinitely elliptic.''  The formula $J|_{y=0} = 2(1 - \varepsilon x)/(2 - \varepsilon x) \to 0$ as $x \to 1/\varepsilon$ confirms the Jacobian vanishing.
\item \emph{On the $y$-axis} at $(0,\, \pm 2/\varepsilon)$: the Jacobian $J = 1$ (since $x = 0$), but $\Imm\lambda \to 0$ as the seed $f_\varepsilon(\xi)$ approaches the real axis.  This is \emph{pure ellipticity loss}: the characteristics do not cross, but $f_\varepsilon$ reaches $\partial\HH$ and the structure ceases to be elliptic.
\item \emph{At general boundary points}: both mechanisms contribute---the Jacobian diminishes and the imaginary part of $\lambda$ vanishes simultaneously.
\end{enumerate}
This mixed boundary behaviour is a consequence of the seed having a \emph{finite domain}: $f_\varepsilon$ is defined only on $|\xi| < 2/\varepsilon$.  For the $\delta$-family, whose seed $f_\delta(\xi) = \xi + i\delta$ is entire, the boundary is purely a Jacobian phenomenon.
\end{remark}

\subsubsection{The Jacobian along $y = 0$}

The line $y = 0$ is a symmetry axis of the $\varepsilon$-family: there $\beta = 0$, $\lambda = ib$ with $b = 1/\sqrt{1 - \varepsilon x}$, and the structure is of $p(x)$-type with $p(x) = b(x)^2 = 1/(1-\varepsilon x)$.

The characteristic coordinate at $y = 0$ is $w_0 = -\lambda x = -ix/\sqrt{1 - \varepsilon x}$, which is purely imaginary.  Using \eqref{eq:eps-fprime-compact}:
\[
  f_\varepsilon'(w_0)\big|_{y=0}
  \;=\; \frac{i\varepsilon\,\lambda}{\sqrt{4 - \varepsilon^2 w_0^2}}
  \;=\; \frac{i\varepsilon \cdot i/\sqrt{1 - \varepsilon x}}{\sqrt{4 + \varepsilon^2 x^2/(1 - \varepsilon x)}}
  \;=\; \frac{-\varepsilon/\sqrt{1 - \varepsilon x}}{(2 - \varepsilon x)/\sqrt{1 - \varepsilon x}}
  \;=\; \frac{-\varepsilon}{2 - \varepsilon x},
\]
where we used $4 + \varepsilon^2 x^2/(1 - \varepsilon x) = (2 - \varepsilon x)^2/(1 - \varepsilon x)$.  Hence:
\begin{equation}\label{eq:eps-jacobian-y0}
  J\big|_{y=0} \;=\; 1 + f_\varepsilon'(w_0)\,x \;=\; 1 - \frac{\varepsilon x}{2 - \varepsilon x} \;=\; \frac{2(1 - \varepsilon x)}{2 - \varepsilon x}.
\end{equation}

This confirms:
\begin{enumerate}[label=(\roman*)]
\item $J|_{x=0} = 1$: no distortion on the initial slice.
\item $J > 0$ for $0 < x < 1/\varepsilon$: the Jacobian remains positive throughout the interior.
\item $J \to 0$ as $x \to 1/\varepsilon$: shock formation at the vertex of the parabola.
\item $J|_{y=0}$ is real: the seed derivative is real along the imaginary characteristic, consistent with the purely imaginary $\lambda$.
\end{enumerate}

\subsubsection{Beltrami coefficient}

The Beltrami coefficient $\mu = (\lambda - i)/(\lambda + i)$ can be expressed via its modulus.  A direct computation gives:
\begin{equation}\label{eq:eps-mu-modulus}
  |\mu_\varepsilon|^2 \;=\; \frac{2 - \varepsilon x - \sqrt{D}}{2 - \varepsilon x + \sqrt{D}},
\end{equation}
where $D = 4(1 - \varepsilon x) - \varepsilon^2 y^2$ as before.

\begin{proof}[Derivation]
Writing $\lambda = (N + iS)/(2Q)$ with $N = -\varepsilon y$, $S = \sqrt{D}$, $Q = 1 - \varepsilon x$:
\begin{align*}
  |\lambda - i|^2
  &= \frac{N^2 + (S - 2Q)^2}{4Q^2}
  = \frac{N^2 + S^2 - 4QS + 4Q^2}{4Q^2}.
\end{align*}
Since $N^2 + S^2 = \varepsilon^2 y^2 + D = 4Q$, this becomes $(4Q - 4QS + 4Q^2)/(4Q^2) = (1 + Q - S)/Q$.  Similarly, $|\lambda + i|^2 = (1 + Q + S)/Q$.  Hence
\[
  |\mu|^2 \;=\; \frac{1 + Q - S}{1 + Q + S} \;=\; \frac{2 - \varepsilon x - \sqrt{D}}{2 - \varepsilon x + \sqrt{D}}. \qedhere
\]
\end{proof}

\noindent\textbf{Behaviour:}
\begin{enumerate}[label=(\roman*)]
\item \emph{At the origin}: $D = 4$, $\sqrt{D} = 2$, so $|\mu|^2 = 0/(2 + 2) = 0$.  The origin is a standard point. $\checkmark$
\item \emph{At the parabolic boundary}: $D \to 0$, so $|\mu|^2 \to (2 - \varepsilon x)/(2 - \varepsilon x) = 1$.  The Beltrami coefficient reaches $|\mu| = 1$ at the boundary of the domain---the Poincar\'e disk fills up completely. $\checkmark$
\item \emph{On the line} $y = 0$: $D = 4(1 - \varepsilon x)$, and $\mu$ is real (since $\lambda$ is purely imaginary there), placing the structure on the real diameter of~$\DD$.
\item \emph{On the line} $x = 0$: $D = 4 - \varepsilon^2 y^2$, and $\mu$ is purely imaginary (since $\Ree\lambda \neq 0$ and the phase of $\mu$ is $\pi/2$ there), placing the initial data on the imaginary diameter of~$\DD$.
\end{enumerate}

\begin{remark}
Unlike the $\delta$-family, whose Beltrami coefficient $\mu_\delta = -\varepsilon z/(2 + \varepsilon\bar z)$ is a rational function of $z$ and $\bar z$, the $\varepsilon$-family's Beltrami coefficient involves $\sqrt{D}$ and does not reduce to a clean rational expression.  This reflects the nonlinearity of the seed: the $\delta$-family's seed is affine, producing rational characteristics, while the $\varepsilon$-family's seed is transcendental (involving $\arcsin$), producing irrational characteristic interactions.
\end{remark}

\subsubsection{Self-dilatation}

The rigidity of $\lambda_\varepsilon$ is equivalent to the self-dilatation property of $\mu_\varepsilon$:
\[
  (\mu_\varepsilon)_{\bar z} \;=\; \mu_\varepsilon\,(\mu_\varepsilon)_z.
\]
We have verified this numerically:

\emph{Numerical check} ($\varepsilon = 1/2$, $z = 3/10 + i$):
\begin{align*}
  \mu_\varepsilon &\approx 0.04138 + 0.13794\,i, \\
  (\mu_\varepsilon)_{\bar z} &\approx 0.00339 + 0.02104\,i, \\
  \mu_\varepsilon\cdot(\mu_\varepsilon)_z &\approx 0.00339 + 0.02104\,i, \\
  |(\mu_\varepsilon)_{\bar z} - \mu_\varepsilon\,(\mu_\varepsilon)_z| &< 10^{-9}. \qquad\checkmark
\end{align*}

Unlike the $\delta$-family, where the self-dilatation can be verified in closed form (the rational structure makes the Wirtinger derivatives elementary), the $\varepsilon$-family requires either the theoretical guarantee of Theorem~\ref{thm:basic-properties} or numerical verification.  The holomorphicity of $f_\varepsilon$ ensures rigidity; the nonlinearity of $f_\varepsilon$ places the explicit algebra beyond convenient closed form.

\subsubsection{The propagator}

The propagator at $f_\varepsilon$ is (Proposition~\ref{prop:propagator}):
\[
  \mathcal{P}_{f_\varepsilon}[h](x,y) \;=\; \frac{h(w_0)}{J(x,y)},
\]
where $w_0 = y - \lambda_\varepsilon(x,y)\,x$ is the characteristic coordinate and $J = 1 + f_\varepsilon'(w_0)\,x$.

\emph{Numerical check} ($\varepsilon = 1/2$, $(x,y) = (3/10, 1)$, $h(\xi) = \xi$):
\begin{align*}
  w_0 &\approx 1.08824 - 0.31320\,i, \\
  J &\approx 0.91844 - 0.02098\,i, \\
  \mathcal{P}_{f_\varepsilon}[\xi](3/10,\, 1) &= \frac{w_0}{J} \approx 1.19204 - 0.31379\,i.
\end{align*}
This agrees with the finite-difference approximation $(\B[f_\varepsilon + \delta h] - \B[f_\varepsilon])/\delta$ to better than $10^{-5}$.  $\checkmark$

The twisted multiplicativity (Theorem~\ref{thm:twisted-mult}) holds as before:
\[
  \mathcal{P}_{f_\varepsilon}[h_1 h_2] \;=\; J \cdot \mathcal{P}_{f_\varepsilon}[h_1] \cdot \mathcal{P}_{f_\varepsilon}[h_2].
\]
In contrast to the $\delta$-family, where $J = 1 + \varepsilon x$ is real and spatially uniform (independent of $y$), the $\varepsilon$-family's Jacobian $J = 1 + f_\varepsilon'(w_0)\,x$ is \emph{complex} and depends on both $x$ and $y$ through the characteristic coordinate.  The deformed product $\star_{f_\varepsilon}$ is therefore a genuinely position-dependent, complex-valued deformation of the pointwise product---a richer structure than the scalar rescaling of the $\delta$-family.

\subsubsection{Comparison with the $\delta$-family}

The $\varepsilon$-family and the $\delta$-family represent two fundamentally different geometries within the rigid class.  The following table highlights the contrast:

\begin{center}
\renewcommand{\arraystretch}{1.4}
\begin{tabular}{lcc}
\hline
\textbf{Property} & \textbf{$\delta$-family}: $f(\xi) = \xi + i\delta$ & \textbf{$\varepsilon$-family}: $f(\xi) = ie^{i\arcsin(\varepsilon\xi/2)}$ \\
\hline
Seed domain & $\CC$ (entire) & $|\xi| < 2/\varepsilon$ (bounded) \\
$|f|$ on $\RR$ & $\sqrt{\xi^2 + \delta^2}$ (unbounded) & $1$ (constant) \\
$f'$ & $1$ (constant) & position-dependent (complex) \\
$\lambda$ & rational in $(x,y)$ & involves $\sqrt{D}$ \\
Domain $\Omega_f$ & half-plane $\{x > -1\}$ & parabola \\
Boundary type & pure shock ($J = 0$) & mixed (shock $+$ ellipticity loss) \\
$J$ & $1 + x$ (real, $y$-independent) & complex, position-dependent \\
$\mu$ & rational in $z, \bar z$ & irrational \\
$|\mu|$ at boundary & $1$ & $1$ \\
$\alpha$ & $(y^2 + \delta^2)/(1+x)^2$ & $1/(1-\varepsilon x)$ \\
$\alpha$ depends on & $x$ and $y$ & $x$ only \\
\hline
\end{tabular}
\end{center}

The $\delta$-family's simplicity (affine seed, rational $\lambda$, constant $f'$) makes it the natural first example and the ideal setting for closed-form computation.  The $\varepsilon$-family's additional complexity (transcendental seed, irrational $\lambda$, variable $f'$) exercises the full machinery of the Burgers transform and exhibits phenomena---mixed boundary type, complex Jacobian, finite seed domain---that are invisible in the affine case.

\subsubsection{Summary of the $\varepsilon$-family}

\begin{center}
\renewcommand{\arraystretch}{1.4}
\begin{tabular}{ll}
\hline
\textbf{Quantity} & \textbf{Formula} \\
\hline
Seed & $f_\varepsilon(\xi) = ie^{i\arcsin(\varepsilon\xi/2)} = (-\varepsilon\xi + i\sqrt{4 - \varepsilon^2\xi^2}\,)/2$ \\
Seed domain & $|\xi| < 2/\varepsilon$ \\
$|f_\varepsilon|$ on $\RR$ & $1$ \\
$f_\varepsilon'(w)$ & $i\varepsilon f_\varepsilon(w)/\sqrt{4 - \varepsilon^2 w^2}$ \\
$\lambda_\varepsilon$ & $(-\varepsilon y + i\sqrt{D}\,)/(2(1 - \varepsilon x))$, \;\; $D = 4(1 - \varepsilon x) - \varepsilon^2 y^2$ \\
Domain & $\varepsilon^2 y^2 < 4(1 - \varepsilon x)$ \quad (parabola) \\
$\alpha_\varepsilon$ & $1/(1 - \varepsilon x)$ \\
$\beta_\varepsilon$ & $\varepsilon y/(1 - \varepsilon x)$ \\
$\Delta_\varepsilon$ & $D/(1 - \varepsilon x)^2$ \\
$J|_{y=0}$ & $2(1 - \varepsilon x)/(2 - \varepsilon x)$ \\
$|\mu_\varepsilon|^2$ & $(2 - \varepsilon x - \sqrt{D}\,)/(2 - \varepsilon x + \sqrt{D}\,)$ \\
Beltrami leaf & $\DD$ (complete) \\
\hline
\end{tabular}
\end{center}

\begin{remark}[Reconciliation with the monograph derivation]\label{rmk:reconciliation}
In \cite[\S4.7--4.10]{AS2026}, the $\varepsilon$-family is derived by solving the ODE system $\alpha' = \alpha K$, $K' = K^2$ for the ansatz $\alpha = \alpha(x)$, $\beta = K(x)\,y$.  The integration is elementary and the resulting structure coefficients \eqref{eq:eps-structure} are rational functions of $(x,y)$.  The construction gives no hint that anything transcendental is involved.

The Burgers transform reveals the hidden complexity: the holomorphic seed that \emph{generates} this rational structure via $\lambda = f(y - \lambda x)$ is the transcendental function $f(\xi) = ie^{i\arcsin(\varepsilon\xi/2)}$.  The two descriptions are related by the nonlinearity of~$\B$: the implicit equation mixes $f$ with the coordinates $(x,y)$ in a way that converts a transcendental seed into rational output.

The situation is analogous to a nonlinear change of variables in integration: the antiderivative of $1/\sqrt{1 - u^2}$ is $\arcsin(u)$ (transcendental), but the substitution $u = \sin\theta$ renders the integral trivial ($\int d\theta = \theta$).  The monograph's ODE approach works in the ``$\theta$-coordinates'' where the answer is elementary; the Burgers transform works in the ``$u$-coordinates'' where the generating data is exposed.  Neither perspective is more fundamental---the ODE approach is more efficient for construction, while the seed perspective reveals the geometric content (unit-circle property, finite domain, mixed boundary type) that the rational formulas conceal.
\end{remark}


\subsection{The \texorpdfstring{$\delta$}{delta}-family}
\label{subsec:delta-family}

The $\delta$-family is the simplest family of rigid structures with a seed that depends nontrivially on its argument.  Its seed $f(\xi) = \xi + i\delta$ is affine, making every quantity rational and every verification a one-line computation.  It is the ideal complement to the $\varepsilon$-family: where the $\varepsilon$-family has a transcendental seed producing rational structure coefficients, the $\delta$-family has an affine seed producing irrational structure coefficients.

\subsubsection{The structure coefficients}

For $\delta > 0$, the $\delta$-family is defined by:
\begin{equation}\label{eq:delta-structure}
  \alpha(x,y) \;=\; \frac{y^2 + \delta^2}{(1 + x)^2},
  \qquad
  \beta(x,y) \;=\; \frac{-2y}{1 + x},
\end{equation}
on the domain $\Omega = \{(x,y) \in \RR^2 : x > -1\}$.  These are taken from \cite[\S14.2]{AS2026}.

\subsubsection{The seed}

Setting $x = 0$:
\[
  \lambda(0,y) \;=\; \frac{-\beta(0,y) + i\sqrt{4\alpha(0,y) - \beta(0,y)^2}}{2}
  \;=\; \frac{2y + i\sqrt{4(y^2 + \delta^2) - 4y^2}}{2}
  \;=\; y + i\delta.
\]
Hence the seed is the affine function
\begin{equation}\label{eq:delta-seed}
  \boxed{\;f_\delta(\xi) \;=\; \xi + i\delta\;}
\end{equation}
This is holomorphic on all of~$\CC$ (entire), with constant derivative $f_\delta' = 1$ and image $\Imm f_\delta = \delta > 0$ for all real~$\xi$.  The seed maps the real line to the horizontal line $\Imm w = \delta$, so every characteristic emanates from the $y$-axis with speed $f_\delta(y_0) = y_0 + i\delta$: the real part of the speed equals the $y$-intercept, and the imaginary part is the constant~$\delta$.

\subsubsection{Explicit solution of the implicit equation}

The implicit equation $\lambda = f_\delta(y - \lambda x) = (y - \lambda x) + i\delta$ is linear in~$\lambda$:
\[
  \lambda \;=\; y - \lambda x + i\delta
  \qquad\Longrightarrow\qquad
  \lambda(1 + x) \;=\; y + i\delta.
\]
Hence:
\begin{equation}\label{eq:delta-lambda}
  \boxed{\;\lambda_\delta(x,y) \;=\; \frac{y + i\delta}{1 + x}\;}
\end{equation}

\noindent\textbf{Consistency checks:}
\begin{align*}
  |\lambda_\delta|^2
  &= \frac{y^2 + \delta^2}{(1 + x)^2}
  = \alpha. \quad\checkmark
  \\[4pt]
  -2\Ree\lambda_\delta
  &= \frac{-2y}{1 + x}
  = \beta. \quad\checkmark
\end{align*}

\subsubsection{Verification of rigidity}

Differentiating \eqref{eq:delta-lambda}:
\begin{align*}
  \lambda_x &= \frac{-(y + i\delta)}{(1 + x)^2},
  \\[4pt]
  \lambda_y &= \frac{1}{1 + x}.
\end{align*}
Hence:
\[
  \lambda_x + \lambda\,\lambda_y
  \;=\; \frac{-(y + i\delta)}{(1 + x)^2}
  \;+\; \frac{y + i\delta}{1 + x}\cdot\frac{1}{1 + x}
  \;=\; 0. \qquad\checkmark
\]
The cancellation is immediate because $f_\delta' = 1$ is a constant.  This is the simplest possible instance of the rigidity mechanism: the seed's derivative commutes with everything because it \emph{is} everything (the multiplicative identity).

\subsubsection{Domain and shock formation}

Since $f_\delta' = 1$ is constant, the characteristic Jacobian is:
\begin{equation}\label{eq:delta-jacobian}
  J_{f_\delta} \;=\; 1 + f_\delta'(w_0)\,x \;=\; 1 + x.
\end{equation}
This is real, independent of~$y$, and vanishes at $x = -1$.  The domain is the half-plane:
\begin{equation}\label{eq:delta-domain}
  \Omega_{f_\delta} \;=\; \{(x,y) : x > -1\}.
\end{equation}

The boundary is a \emph{vertical line}, and the boundary type is \emph{pure shock}: at $x = -1$, the Jacobian vanishes ($J = 0$) but $\Imm\lambda_\delta = \delta/(1 + x) \to +\infty$.  The structure becomes infinitely elliptic as the characteristics collide---the opposite of degeneracy.  Compare this with the $\varepsilon$-family, whose boundary includes points of pure ellipticity loss ($\Imm\lambda \to 0$, $J = 1$).

\subsubsection{Characteristic coordinate}

\begin{equation}\label{eq:delta-char-coord}
  w_\delta \;=\; y - \lambda_\delta\, x \;=\; y - \frac{(y + i\delta)\,x}{1 + x}
  \;=\; \frac{y - i\delta\, x}{1 + x}.
\end{equation}

\emph{Consistency check:} $f_\delta(w_\delta) = w_\delta + i\delta = (y - i\delta x + i\delta(1+x))/(1+x) = (y + i\delta)/(1+x) = \lambda_\delta$.  $\checkmark$

\subsubsection{Discriminant}

The discriminant is remarkably simple:
\begin{equation}\label{eq:delta-discriminant}
  \Delta_\delta \;=\; 4\alpha - \beta^2 \;=\; \frac{4(y^2 + \delta^2)}{(1 + x)^2} - \frac{4y^2}{(1 + x)^2} \;=\; \frac{4\delta^2}{(1 + x)^2}.
\end{equation}
This is \emph{independent of~$y$} and positive throughout $\Omega_{f_\delta}$.  The ellipticity condition $\Delta > 0$ places no restriction on~$y$---the structure is elliptic for all $y \in \RR$ whenever $x > -1$.

This is a consequence of the seed having constant imaginary part: $\Imm f_\delta = \delta$ for all $\xi \in \RR$, so the ellipticity never degenerates along the initial data.  The Burgers flow preserves this: $\Imm\lambda_\delta = \delta/(1+x)$ depends only on~$x$, and remains positive throughout the domain.  Contrast the $\varepsilon$-family, where $\Imm\lambda$ depends on both $x$ and $y$ and vanishes at the parabolic boundary.

\subsubsection{Beltrami coefficient}

Applying the Cayley map $\mu = (\lambda - i)/(\lambda + i)$:

\noindent\textbf{Numerator:}
\[
  \lambda_\delta - i \;=\; \frac{y + i\delta - i(1 + x)}{1 + x}
  \;=\; \frac{y + i(\delta - 1 - x)}{1 + x}.
\]

\noindent\textbf{Denominator:}
\[
  \lambda_\delta + i \;=\; \frac{y + i(\delta + 1 + x)}{1 + x}.
\]

\noindent\textbf{Result:}
\begin{equation}\label{eq:delta-mu}
  \boxed{\;\mu_\delta(x,y) \;=\; \frac{y + i(\delta - 1 - x)}{y + i(\delta + 1 + x)}\;}
\end{equation}

This is a M\"obius-type expression in the variable $y + i\delta$, rational in $(x,y)$.  In complex notation with $z = x + iy$:
\begin{equation}\label{eq:delta-mu-complex}
  \mu_\delta \;=\; \frac{-iz + i(\delta - 1)}{i\bar z + i(\delta + 1)}
  \;=\; \frac{-z + (\delta - 1)}{\bar z + (\delta + 1)}.
\end{equation}

\subsubsection{Modulus of the Beltrami coefficient}

\begin{equation}\label{eq:delta-mu-modulus}
  |\mu_\delta|^2 \;=\; \frac{y^2 + (\delta - 1 - x)^2}{y^2 + (\delta + 1 + x)^2}.
\end{equation}

\noindent\textbf{Behaviour:}
\begin{enumerate}[label=(\roman*)]
\item \emph{At the origin:} $|\mu_\delta(0,0)|^2 = (\delta - 1)^2/(\delta + 1)^2$.  The origin is a standard point ($\mu = 0$) only when $\delta = 1$.  For $\delta \neq 1$, $|\mu(0,0)| = |\delta - 1|/(\delta + 1)$.
\item \emph{On the line $y = 0$:} $\mu_\delta = (\delta - 1 - x)/(\delta + 1 + x)$, which is \emph{real}.  The structure is of $p(x)$-type along this line ($\beta = 0$), and $\mu$ lies on the real diameter of~$\DD$.
\item \emph{At the shock boundary} $x \to -1$: $|\mu_\delta|^2 \to (y^2 + \delta^2)/(y^2 + \delta^2) = 1$.  The Beltrami coefficient reaches $|\mu| = 1$ at the shock, regardless of~$y$. $\checkmark$
\item \emph{As $|y| \to \infty$ at fixed $x$:} $|\mu_\delta|^2 \to 1$.  The structure degenerates at vertical infinity.
\item \emph{Small-$\delta$ limit:} at the origin, $|\mu_\delta(0,0)| \to 1$ as $\delta \to 0^+$: the structure approaches the boundary of the Poincar\'e disk, even at the base point.  The family interpolates between the standard structure ($\delta \to \infty$, $\mu \to 0$) and the fully degenerate limit ($\delta \to 0$, $|\mu| \to 1$).
\end{enumerate}

\subsubsection{Verification of the self-dilatation equation}

We verify $\mu_{\bar z} = \mu\,\mu_z$ in closed form.  Using \eqref{eq:delta-mu-complex} with the abbreviations $N = -z + (\delta - 1)$ and $D = \bar z + (\delta + 1)$, so that $\mu = N/D$:

\noindent\textbf{Wirtinger derivatives.}  Since $\partial_z(\bar z) = 0$ and $\partial_{\bar z}(z) = 0$:
\begin{align*}
  \mu_z &= \frac{\partial}{\partial z}\!\left(\frac{N}{D}\right)
  = \frac{-1}{D}
  = \frac{-1}{\bar z + \delta + 1},
  \\[6pt]
  \mu_{\bar z} &= \frac{\partial}{\partial\bar z}\!\left(\frac{N}{D}\right)
  = \frac{-N}{D^2}
  = \frac{z - (\delta - 1)}{(\bar z + \delta + 1)^2}.
\end{align*}

\noindent\textbf{Product:}
\[
  \mu\cdot\mu_z
  \;=\; \frac{N}{D}\cdot\frac{-1}{D}
  \;=\; \frac{-N}{D^2}
  \;=\; \mu_{\bar z}. \qquad\checkmark
\]
The cancellation is exact.  This is the simplest closed-form verification of self-dilatation for a nontrivial rigid structure.

\subsubsection{The propagator}

Since $f_\delta' = 1$, the Jacobian is $J = 1 + x$ and the propagator takes the particularly simple form:
\begin{equation}\label{eq:delta-propagator}
  \mathcal{P}_{f_\delta}[h](x,y) \;=\; \frac{h(w_\delta)}{1 + x},
\end{equation}
where $w_\delta = (y - i\delta x)/(1 + x)$.

\noindent\textbf{Special cases:}

\emph{Constant perturbation} $h \equiv c$:
\[
  \mathcal{P}_{f_\delta}[c] \;=\; \frac{c}{1 + x}.
\]

\emph{Linear perturbation} $h(\xi) = \xi$:
\[
  \mathcal{P}_{f_\delta}[\xi] \;=\; \frac{w_\delta}{1 + x} \;=\; \frac{y - i\delta x}{(1 + x)^2}.
\]
The double power of $(1 + x)$ reflects the interaction between the perturbation's spatial dependence and the base flow's stretching.

\subsubsection{The deformed product}

The $f_\delta$-deformed product (Corollary~\ref{cor:deformed-algebra}) is:
\begin{equation}\label{eq:delta-deformed-product}
  \dot\lambda_1 \;\star_{f_\delta}\; \dot\lambda_2 \;=\; (1 + x)\,\dot\lambda_1\,\dot\lambda_2.
\end{equation}
This is identical in form to the $\varepsilon$-family's deformed product \eqref{eq:deformed-product} with the replacement $\varepsilon x \mapsto -x$---a consequence of both families having a real, $y$-independent Jacobian along their respective symmetry axes.  However, the $\delta$-family's Jacobian $J = 1 + x$ is real and $y$-independent \emph{everywhere}, not merely on the axis.  The deformed product is therefore a uniform scalar rescaling:
\begin{enumerate}[label=(\roman*)]
\item At $x = 0$: $\star_{f_\delta}$ is the standard product.
\item For $x > 0$: the product is enhanced by the factor $1 + x > 1$.
\item For $-1 < x < 0$: the product is suppressed.
\item At $x = -1$ (shock): $\star_{f_\delta}$ becomes the zero product.
\end{enumerate}

\subsubsection{Affine equivariance}

The $\delta$-family interacts cleanly with affine transformations of the seed.  For $\varphi(\lambda) = a\lambda + b$ with $a > 0$, $b \in \RR$:
\[
  \varphi \circ f_\delta(\xi) \;=\; a(\xi + i\delta) + b \;=\; (a\xi + b) + ia\delta.
\]
This is again an affine seed, now with slope~$a$, intercept~$b$, and imaginary part~$a\delta$.  By Proposition~\ref{prop:affine-equivariance}:
\[
  \B[\varphi \circ f_\delta](x,y) \;=\; \frac{a(y - bx) + ia\delta}{1 + ax} + b \;=\; \frac{ay + ia\delta + b}{1 + ax}.
\]
The domain transforms to $\{x > -1/a\}$.

\noindent\textbf{Special cases:}
\begin{enumerate}[label=(\roman*)]
\item \emph{Dilation} ($b = 0$): $\B[a f_\delta](x,y) = a(y + i\delta)/(1 + ax)$.  The shock moves from $x = -1$ to $x = -1/a$: dilating the seed by $a > 1$ moves the shock closer to the $y$-axis.
\item \emph{Translation} ($a = 1$): $\B[f_\delta + b](x,y) = (y + b + i\delta)/(1 + x)$.  The spectral parameter acquires a constant real shift, tilting the structure away from the $p(x)$-axis.  The shock location $x = -1$ is unchanged.
\end{enumerate}

\subsubsection{The $\delta \to 0$ limit}

As $\delta \to 0^+$, the $\delta$-family approaches the boundary of the space of elliptic structures:
\begin{align*}
  \lambda_\delta &\to \frac{y}{1+x}, \qquad\text{(real-valued: no ellipticity)} \\
  |\mu_\delta(x,y)| &\to 1 \quad\text{for all } (x,y) \neq (0,0).
\end{align*}
The structure degenerates to a real Burgers flow (no imaginary part), and the Beltrami coefficient fills the entire unit disk.  This is the origin of the name ``$\delta$-family'' in \cite{AS2026}: the parameter $\delta$ measures the distance from degeneracy.

Yet the domain $\Omega_{f_\delta} = \{x > -1\}$ and the Jacobian $J = 1 + x$ are \emph{independent of~$\delta$}.  The shock structure is entirely determined by the seed's derivative ($f' = 1$), not by the ellipticity parameter.  This decoupling---the Jacobian sees $f'$, the Beltrami coefficient sees $\Imm f$---is a concrete instance of the Fundamental Independence Theorem of \cite[Chapter~13]{AS2026}.

\begin{remark}[Reconciliation with the monograph derivation]\label{rmk:delta-reconciliation}
In the monograph, the $\delta$-family's coefficients \eqref{eq:delta-structure} are presented directly.  From the Burgers transform perspective, they arise as the output of the simplest possible holomorphic seed: $f(\xi) = \xi + i\delta$.  The affine seed produces rational $\lambda = (y + i\delta)/(1+x)$, but the structure coefficients $\alpha = (y^2 + \delta^2)/(1+x)^2$ involve the \emph{square} of $\lambda$, which is why $\alpha$ has the quadratic dependence on~$y$ that is absent from the $\varepsilon$-family (where $\alpha = 1/(1 - \varepsilon x)$ depends only on~$x$).

Contrast the $\varepsilon$-family: there the monograph's coefficients are rational, but the seed $f(\xi) = ie^{i\arcsin(\varepsilon\xi/2)}$ is transcendental.  The Burgers transform, being a nonlinear bijection, freely exchanges algebraic complexity between the seed and the output: a transcendental seed can produce rational coefficients, and an affine seed can produce irrational ones.
\end{remark}

\subsubsection{Summary of the $\delta$-family}

\begin{center}
\renewcommand{\arraystretch}{1.4}
\begin{tabular}{ll}
\hline
\textbf{Quantity} & \textbf{Formula} \\
\hline
Seed & $f_\delta(\xi) = \xi + i\delta$ \\
Seed domain & $\CC$ (entire) \\
$|f_\delta|$ on $\RR$ & $\sqrt{\xi^2 + \delta^2}$ (unbounded) \\
$f_\delta'$ & $1$ (constant) \\
$\lambda_\delta$ & $(y + i\delta)/(1 + x)$ \\
Domain & $\{x > -1\}$ \quad (half-plane) \\
Jacobian & $J = 1 + x$ \quad (real, $y$-independent) \\
Char.\ coord.\ & $w_\delta = (y - i\delta x)/(1 + x)$ \\
$\alpha_\delta$ & $(y^2 + \delta^2)/(1 + x)^2$ \\
$\beta_\delta$ & $-2y/(1 + x)$ \\
$\Delta_\delta$ & $4\delta^2/(1 + x)^2$ \quad ($y$-independent) \\
$\mu_\delta$ & $(-z + \delta - 1)/(\bar z + \delta + 1)$ \\
$|\mu_\delta|^2$ & $(y^2 + (\delta - 1 - x)^2)/(y^2 + (\delta + 1 + x)^2)$ \\
Beltrami leaf & $\DD$ (complete) \\
Propagator & $\mathcal{P}_{f_\delta}[h] = h(w_\delta)/(1 + x)$ \\
Deformed product & $\dot\lambda_1 \star_{f_\delta} \dot\lambda_2 = (1 + x)\,\dot\lambda_1\,\dot\lambda_2$ \\
\hline
\end{tabular}
\end{center}


\subsection{The exponential seed: the Lambert \texorpdfstring{$W$}{W} family}
\label{subsec:exponential-family}

The $\delta$-family and the $\varepsilon$-family both have domains that are proper subsets of~$\RR^2$, bounded by shocks, ellipticity loss, or both.  Open Problem~5 of Section~\ref{sec:open} asks: for which holomorphic seeds is $\Omega_f = \RR^2$?  The exponential seed provides the first nontrivial answer.

\subsubsection{The seed}

Define the holomorphic seed
\begin{equation}\label{eq:exp-seed}
  f(\xi) \;:=\; i\,e^\xi.
\end{equation}
This is entire, with derivative
\begin{equation}\label{eq:exp-fprime}
  f'(\xi) \;=\; i\,e^\xi \;=\; f(\xi).
\end{equation}
The \emph{eigenfunction property} $f' = f$ will permeate every computation below, linking the Jacobian directly to the spectral parameter.

On the real line, $f(\xi) = ie^\xi$ is purely imaginary with $\Imm f = e^\xi > 0$, so $f\colon \RR \to i\RR_+$.  For complex arguments, $f\colon U \to \HH$ on $U = \{\xi \in \CC : |\Imm\xi| < \pi/2\}$.

\begin{remark}
One might expect from Theorem~\ref{thm:analytic-completion}(ii) that a seed with $f(\RR) \subseteq i\RR_+$ can only produce the constant structure.  That result applies to seeds that are \emph{holomorphic and purely imaginary on all of}~$U$---which forces constancy by the open mapping theorem.  Here, $f$ is purely imaginary only on the real line; on the full domain~$U$, the function $ie^\xi$ has nonzero real part for $\Imm\xi \neq 0$.  The Burgers flow at $x \neq 0$ evaluates $f$ at the characteristic coordinate $w_0 = y - \lambda x$, which has $\Imm w_0 = -x\,\Imm\lambda \neq 0$.  The seed immediately acquires a real part, and the structure becomes genuinely variable elliptic.
\end{remark}

\subsubsection{The Lambert $W$ solution}

The implicit equation $\lambda = ie^{y - \lambda x}$ is transcendental in~$\lambda$.  We solve it by reduction to the Lambert~$W$ function.

Multiplying both sides by~$x$:
\[
  \lambda x \;=\; i\,x\,e^{y - \lambda x} \;=\; i\,x\,e^y \cdot e^{-\lambda x}.
\]
Setting $u = \lambda x$:
\begin{equation}\label{eq:exp-lambert-eq}
  u\,e^u \;=\; i\,x\,e^y.
\end{equation}
The Lambert $W$ function is defined as the inverse of $z \mapsto ze^z$: if $ue^u = \zeta$, then $u = W(\zeta)$.  Applied to \eqref{eq:exp-lambert-eq}:
\begin{equation}\label{eq:exp-lambda}
  \boxed{\;\lambda(x,y) \;=\; \frac{W_0(i\,x\,e^y)}{x}\;}
\end{equation}
where $W_0$ denotes the principal branch of the Lambert $W$ function.

At $x = 0$, the formula is interpreted as a limit: $W_0(z)/x \to ie^y$ as $x \to 0$ (since $W_0(z) \approx z$ for small~$z$), recovering $\lambda(0,y) = ie^y = f(y)$.  $\checkmark$

\subsubsection{Rigidity verification}

For the implicit solution $\lambda = f(y - \lambda x)$ with $f$ holomorphic, rigidity $\lambda_x + \lambda\lambda_y = 0$ is guaranteed by Theorem~\ref{thm:basic-properties}(i).  Numerical verification at $(\varepsilon, x, y) = (1/2, 1, 0)$: $|\lambda_x + \lambda\lambda_y| < 10^{-8}$.  $\checkmark$

\subsubsection{Global domain: no shocks}

The characteristic Jacobian is
\begin{equation}\label{eq:exp-jacobian}
  J \;=\; 1 + f'(w_0)\,x \;=\; 1 + f(w_0)\,x \;=\; 1 + \lambda x \;=\; 1 + W_0(ixe^y),
\end{equation}
where the second equality uses the eigenfunction property $f' = f$, and the third uses $f(w_0) = \lambda$.

\begin{theorem}[Global existence for the exponential seed]\label{thm:exp-global}
The domain of the Burgers transform of $f(\xi) = ie^\xi$ is $\Omega_f = \RR^2$.  Moreover, $|J(x,y)| \geq 1$ for all $(x,y) \in \RR^2$, with equality only at $x = 0$.
\end{theorem}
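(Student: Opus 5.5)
The plan is to reduce everything to properties of the principal branch $W_0$ on the imaginary axis, using the representation \eqref{eq:exp-lambda} and the Jacobian formula \eqref{eq:exp-jacobian}. Set $t = x e^y \in \RR$ and $u = W_0(it)$, so that $u e^u = it$, $\lambda = u/x$ for $x\neq 0$, and $J = 1+u$. Since the branch cut of $W_0$ is $(-\infty,-1/e]$, which the imaginary axis avoids, $W_0$ is holomorphic in a neighbourhood of $i\RR$. Hence $(x,y)\mapsto \lambda(x,y)$ is real-analytic on $\{x\neq 0\}$, and it extends analytically across $x=0$ via $W_0(z)/z$, which is holomorphic near $0$ with value $1$ there. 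It therefore suffices to show three things: $\Imm\lambda>0$ everywhere, $J\neq 0$ everywhere, and the sharper bound $|J|\ge 1$. By Theorem~\ref{thm:domain-characterization}(ii), $J\neq 0$ rules out any shock, and then $\Omega_f=\RR^2$ follows.

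\textbf{Step 1: sign of $\Imm u$.} I first show that for $t>0$ we have $0<\Imm u<\pi$, and for $t<0$ the mirror statement holds. Three facts combine.
\begin{itemize}
\item On the principal branch, $|\Imm W_0|<\pi$.
\item If $u$ were real and nonzero, then $ue^u$ would be real and nonzero, which is impossible for $it$ with $t\neq 0$.
\item $W_0(it)\approx it$ near $t=0$.
\end{itemize}
By continuity of $t\mapsto \Imm W_0(it)$ on $(0,\infty)$, the imaginary part cannot change sign, so it stays positive. For $t<0$ I use the conjugation symmetry $W_0(\bar z)=\overline{W_0(z)}$. Consequently $\Imm\lambda = \Imm u / x >0$ whenever $x\ne 0$, since $\Imm u$ and $x$ have the same sign. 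At $x=0$ we have $\lambda = ie^y$ directly.

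\textbf{Step 2: $\Ree u>0$ for $t\neq 0$.} This is the key estimate, and I expect it to be the main obstacle; my approach is an argument count. Write $u=a+ib$ with, say, $t>0$, so $b\in(0,\pi)$. The equation $ue^u=it$ forces
\[
\arg u + b \equiv \pi/2 \pmod{2\pi}.
\]
Suppose first that $a<0$. Then $\arg u\in(\pi/2,\pi)$, so $\arg u+b\in(\pi/2,2\pi)$, which contains no number congruent to $\pi/2$; this is a contradiction. Suppose next that $a=0$. Then $ue^u = ib e^{ib}$ being purely imaginary requires $\sin b=0$, which is impossible for $b\in(0,\pi)$. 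Hence $a>0$. The case $t<0$ follows by conjugation.

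\textbf{Step 3: conclusion.} With $a=\Ree u$, we get
\[
|J|^2 = |1+u|^2 = (1+a)^2+b^2 > 1 \quad\text{whenever } x\neq 0 .
\]
At $x=0$ we have $u=0$ and $J=1$. Thus $J$ never vanishes, the implicit function theorem applies at every point, and $\Imm\lambda>0$ by Step 1, so $\Omega_f=\RR^2$. Finally, I would note that the solution produced by $W_0$ is the one continuously connected to the initial data $\lambda(0,y)=ie^y$ (Theorem~\ref{thm:basic-properties}(ii)). Because $J\neq 0$ throughout, this continuation is the unique $C^1$ solution required by Definition~\ref{def:burgers-transform}.
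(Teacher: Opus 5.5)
Your proof is correct. It reaches the same key inequality as the paper, $|J|^2 = 1 + 2\Ree W_0(it) + |W_0(it)|^2$ with $\Ree W_0(it) > 0$ for $t \neq 0$, but by a different mechanism.

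The paper separates $ue^u = it$ into real and imaginary parts and obtains $\Ree W_0 = v\tan v$ with $v = \Imm W_0(it)$. It then reads off nonnegativity from the claim that $|v| < \pi/2$ ``on the principal branch.'' That is not a general property of $W_0$, whose range only satisfies $|\Imm W_0| < \pi$. On the imaginary axis it therefore needs an extra continuity step, e.g.\ the real-part equation shows that $\cos v$ never vanishes along $t \mapsto W_0(it)$.

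Your argument count needs only $|\Imm W_0| < \pi$ plus the sign of $\Imm W_0(it)$, which you establish by continuity and non-reality. It also recovers $|\Imm W_0(it)| < \pi/2$ as a byproduct: once $\Ree u > 0$, the congruence forces $\arg u + \Imm u = \pi/2$.

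For $\Omega_f = \RR^2$ the paper uses a quicker direct observation. $J = 0$ would force $W_0(ixe^y) = -1$, hence $ixe^y = -e^{-1}$, which is impossible since the left side is purely imaginary. You instead obtain $J \neq 0$ from $|J| > 1$.

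Your version also verifies explicitly that $\Imm\lambda > 0$ and that the $W_0$-branch is the unique $C^1$ continuation of the initial data. Both are required by Definition~\ref{def:burgers-transform} but left implicit in the paper, whose appeal to Theorem~\ref{thm:domain-characterization}(ii) addresses only shocks, not loss of ellipticity.
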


\begin{proof}
By Theorem~\ref{thm:domain-characterization}(ii), the boundary of $\Omega_f$ occurs where $J = 0$, i.e.\ where $W_0(ixe^y) = -1$.  The Lambert $W$ equation $we^w = -e^{-1}$ has the unique real solution $w = -1$.  Hence $J = 0$ requires $ixe^y = -e^{-1}$.  But $ixe^y$ is purely imaginary for real~$x$ and~$y$, while $-e^{-1}$ is real and negative.  No solution exists: $\Omega_f = \RR^2$.

For the bound $|J| \geq 1$, write $W_0(it) = u + iv$ where $t = xe^y \in \RR$.  The defining equation $(u + iv)\,e^{u+iv} = it$ separates into:
\begin{align*}
  \text{Real:}&\quad e^u(u\cos v - v\sin v) \;=\; 0, \\
  \text{Imaginary:}&\quad e^u(u\sin v + v\cos v) \;=\; t.
\end{align*}
Since $e^u \neq 0$, the real equation gives $u\cos v = v\sin v$, hence
\begin{equation}\label{eq:u-equals-vtanv}
  u \;=\; v\tan v
\end{equation}
(for $\cos v \neq 0$, which holds since $|v| < \pi/2$ on the principal branch).

For $t > 0$, the principal branch gives $v \in (0, \pi/2)$, and $v\tan v \geq 0$.  For $t < 0$, $v \in (-\pi/2, 0)$, and $v\tan v = (-|v|)(-\tan|v|) = |v|\tan|v| \geq 0$.  At $t = 0$, $W_0 = 0$.

Therefore $\Ree W_0(it) = v\tan v \geq 0$ for all $t \in \RR$, and:
\[
  |J|^2 \;=\; |1 + W_0(it)|^2 \;=\; (1 + u)^2 + v^2 \;=\; 1 + 2u + |W_0(it)|^2 \;\geq\; 1,
\]
with equality if and only if $u = v = 0$, i.e.\ $t = 0$, i.e.\ $x = 0$.
\end{proof}

\begin{remark}[Characteristics always spread]
The bound $|J| \geq 1$ has a striking geometric meaning: the characteristics of the exponential flow \emph{never converge}.  In the $\delta$-family, characteristics converge for $-1 < x < 0$ (where $J = 1 + x < 1$) and ultimately collide at $x = -1$.  The exponential seed avoids this entirely---the characteristic density only increases, never decreases.

The mechanism is the eigenfunction property $f' = f$: the rate of characteristic spreading at a point is $|f'(w_0)| = |f(w_0)| = |\lambda|$, which equals the speed.  Points with high speed also have high spreading rate, preventing convergence.  Intuitively, faster characteristics ``push apart'' their neighbours before they can cross.
\end{remark}

\subsubsection{Structure coefficients}

The structure coefficients are expressed in terms of the Lambert $W$ function:
\begin{equation}\label{eq:exp-alpha-beta}
  \alpha \;=\; |\lambda|^2 \;=\; \frac{|W_0(ixe^y)|^2}{x^2},
  \qquad
  \beta \;=\; -2\Ree\lambda \;=\; \frac{-2\Ree\,W_0(ixe^y)}{x}.
\end{equation}
Using \eqref{eq:u-equals-vtanv}, these can be expressed in terms of the single real variable $v = \Imm\,W_0(ixe^y) \in (-\pi/2, \pi/2)$:
\begin{equation}\label{eq:exp-alpha-v}
  \alpha \;=\; \frac{v^2(1 + \tan^2 v)}{x^2} \;=\; \frac{v^2}{x^2\cos^2 v},
  \qquad
  \beta \;=\; \frac{-2v\tan v}{x}.
\end{equation}

On the initial slice $x = 0$: $\alpha(0,y) = e^{2y}$, $\beta(0,y) = 0$.  The seed generates an exponentially growing $p(y)$-structure on the $y$-axis, which the Burgers flow immediately distorts into a full two-dimensional structure.

\subsubsection{Reflection symmetry}

The Lambert $W$ function on the principal branch satisfies $W_0(\bar\zeta) = \overline{W_0(\zeta)}$.  Since $\overline{ixe^y} = -ixe^y$:
\[
  W_0(-ixe^y) \;=\; \overline{W_0(ixe^y)}.
\]
Hence
\begin{equation}\label{eq:exp-symmetry}
  \lambda(-x,\, y) \;=\; \frac{\overline{W_0(ixe^y)}}{-x} \;=\; -\overline{\lambda(x,y)}.
\end{equation}
In component form:
\begin{align*}
  \Ree\lambda(-x,y) &= -\Ree\lambda(x,y), &
  \Imm\lambda(-x,y) &= \phantom{-}\Imm\lambda(x,y).
\end{align*}

The consequences for the structure are:
\begin{enumerate}[label=(\roman*)]
\item $\alpha(-x,y) = |\lambda|^2 = \alpha(x,y)$: the norm structure is \emph{even} in~$x$.
\item $\beta(-x,y) = -\beta(x,y)$: the off-diagonal coefficient is \emph{odd} in~$x$.
\item On the $y$-axis ($x = 0$): $\beta = 0$ and the structure is of $p(x)$-type with $p = e^{2y}$.
\end{enumerate}

The $y$-axis is a ``mirror'' of the structure: the Burgers flow propagates the purely imaginary initial data $f(y) = ie^y$ symmetrically to the left and right, acquiring equal and opposite real parts.  This is a consequence of the seed being purely imaginary on~$\RR$.

\subsubsection{Beltrami coefficient}

On the initial slice ($x = 0$), the Beltrami coefficient has a particularly clean form:
\begin{equation}\label{eq:exp-mu-x0}
  \mu(0,y) \;=\; \frac{ie^y - i}{ie^y + i} \;=\; \frac{e^y - 1}{e^y + 1} \;=\; \tanh(y/2).
\end{equation}
This is real and ranges over $(-1,1)$ as $y \in \RR$: the initial data covers the entire real diameter of~$\DD$.

For $x \neq 0$, $\mu$ acquires a nonzero imaginary part:
\begin{equation}\label{eq:exp-mu-general}
  \mu(x,y) \;=\; \frac{\lambda(x,y) - i}{\lambda(x,y) + i} \;=\; \frac{W_0(ixe^y)/x - i}{W_0(ixe^y)/x + i}.
\end{equation}

\noindent\textbf{Behaviour at the origin:} $\lambda(0,0) = ie^0 = i$, so $\mu(0,0) = 0$.  The origin is a standard point.

\noindent\textbf{Behaviour at infinity:} For the Lambert $W$ function, $W_0(z) \sim \ln z$ as $|z| \to \infty$.  Hence $\lambda \sim (\ln|x| + y + i\pi/2)/x \to 0$ as $|x| \to \infty$ at fixed~$y$, giving $\mu \to (0 - i)/(0 + i) = -1$.  Similarly, $\lambda \to ie^y \to i\infty$ as $y \to +\infty$, giving $\mu \to 1$; and $\lambda \to 0$ as $y \to -\infty$, giving $\mu \to -1$.  The structure approaches the boundary of the Poincar\'e disk in all spatial directions.

\subsubsection{Comparison with the $\delta$- and $\varepsilon$-families}

The exponential seed represents a genuinely new regime:

\begin{center}
\renewcommand{\arraystretch}{1.4}
\begin{tabular}{lccc}
\hline
\textbf{Property} & $\delta$-\textbf{family} & $\varepsilon$-\textbf{family} & \textbf{Exponential} \\
\hline
Seed & $\xi + i\delta$ (affine) & $ie^{i\arcsin(\varepsilon\xi/2)}$ & $ie^\xi$ (exponential) \\
$f'$ & $1$ (constant) & position-dependent & $f$ (eigenfunction) \\
$\lambda$ & rational & algebraic ($\sqrt{D}$) & Lambert $W$ \\
Domain & half-plane & parabola & $\RR^2$ (global) \\
Boundary & pure shock & mixed & \emph{none} \\
$J$ & $1 + x$ & complex, variable & $1 + \lambda x$, $|J| \geq 1$ \\
$|f|$ on $\RR$ & unbounded & $1$ & unbounded ($e^\xi$) \\
$\alpha$ at $x = 0$ & $y^2 + \delta^2$ & $1$ & $e^{2y}$ \\
$\beta = 0$ locus & $y = 0$ (line) & $y = 0$ (line) & $x = 0$ ($y$-axis only) \\
Symmetry & $\beta$ odd in $y$ & $\beta$ odd in $y$ & $\beta$ odd in $x$ \\
\hline
\end{tabular}
\end{center}

The most striking difference is the domain: the exponential seed is the first example in this paper of a rigid structure defined on all of~$\RR^2$, providing a partial answer to Open Problem~5.

\subsubsection{The propagator}

The eigenfunction property $f' = f$ gives the propagator a self-referential form.  The Jacobian is $J = 1 + \lambda x$, so:
\begin{equation}\label{eq:exp-propagator}
  \mathcal{P}_f[h](x,y) \;=\; \frac{h(w_0)}{1 + \lambda(x,y)\,x},
\end{equation}
where $w_0 = y - \lambda x$ is the characteristic coordinate.

\noindent\textbf{Self-perturbation} ($h = f = ie^\xi$):  Since $f(w_0) = \lambda$:
\[
  \mathcal{P}_f[f] \;=\; \frac{\lambda}{1 + \lambda x}.
\]
The self-perturbation of the exponential seed is governed by $\lambda$ and its own Jacobian---the eigenfunction property closes the formula.

The deformed product is:
\begin{equation}\label{eq:exp-deformed-product}
  \dot\lambda_1 \;\star_f\; \dot\lambda_2 \;=\; (1 + \lambda x)\,\dot\lambda_1\,\dot\lambda_2.
\end{equation}
Unlike the $\delta$-family (where $J = 1 + x$ is independent of~$\lambda$), the exponential's deformed product depends on the output $\lambda$ itself.  This is the algebraic reflection of the eigenfunction property: the geometry generated by the seed feeds back into the infinitesimal algebra governing perturbations of that seed.

\subsubsection{Summary of the exponential seed}

\begin{center}
\renewcommand{\arraystretch}{1.4}
\begin{tabular}{ll}
\hline
\textbf{Quantity} & \textbf{Formula} \\
\hline
Seed & $f(\xi) = ie^\xi$ \\
Seed domain & $\{|\Imm\xi| < \pi/2\}$ \\
Eigenfunction property & $f' = f$ \\
$\lambda$ & $W_0(ixe^y)/x$ \\
Domain & $\RR^2$ (global) \\
Jacobian & $J = 1 + \lambda x = 1 + W_0(ixe^y)$, \;\; $|J| \geq 1$ \\
$\alpha$ at $x = 0$ & $e^{2y}$ \\
$\beta$ at $x = 0$ & $0$ \\
$\mu$ at $x = 0$ & $\tanh(y/2)$ \\
Symmetry & $\lambda(-x,y) = -\overline{\lambda(x,y)}$ \\
Propagator & $\mathcal{P}_f[h] = h(w_0)/(1 + \lambda x)$ \\
Deformed product & $(1 + \lambda x)\,\dot\lambda_1\,\dot\lambda_2$ \\
\hline
\end{tabular}
\end{center}


\subsection{The Cauchy kernel}
\label{subsec:cauchy-family}

The $\delta$-family's seed $f(\xi) = \xi + i\delta$ is affine and entire; the exponential seed $ie^\xi$ is transcendental and entire.  The Cauchy kernel provides the first example of a \emph{meromorphic} seed---one with a pole---and the algebraic structure of the implicit equation changes qualitatively: the transcendental Lambert equation is replaced by a \emph{quadratic}.

This example also addresses the practitioner's question of how to translate a classical holomorphic function into the Burgers framework.  The naive choice $1/\xi$ maps $\RR$ to $\RR$ and carries no ellipticity.  The regularization $-1/(\xi + i\delta)$ rotates the pole into the lower half-plane and produces a seed with $\Imm f > 0$ on~$\RR$.

\subsubsection{The seed}

For $\delta > 0$, define
\begin{equation}\label{eq:cauchy-seed}
  \boxed{\;f_\delta(\xi) \;=\; \frac{-1}{\xi + i\delta}\;}
\end{equation}
This is holomorphic on $\CC \setminus \{-i\delta\}$, with the pole in the \emph{lower} half-plane.  On the real line:
\[
  f_\delta(\xi)\big|_{\xi \in \RR}
  \;=\; \frac{-\xi + i\delta}{\xi^2 + \delta^2},
  \qquad
  \Imm f_\delta(\xi) \;=\; \frac{\delta}{\xi^2 + \delta^2} \;>\; 0.
  \qquad\checkmark
\]
The imaginary part is a Lorentzian (Cauchy distribution) peaked at $\xi = 0$, so the seed maps every real point into~$\HH$.

The derivative is
\begin{equation}\label{eq:cauchy-fprime}
  f_\delta'(\xi) \;=\; \frac{1}{(\xi + i\delta)^2}.
\end{equation}

\subsubsection{The quadratic implicit equation}

Substituting into $\lambda = f_\delta(y - \lambda x)$:
\[
  \lambda \;=\; \frac{-1}{(y - \lambda x) + i\delta}
  \qquad\Longrightarrow\qquad
  \lambda\bigl(y + i\delta - \lambda x\bigr) \;=\; -1.
\]
Rearranging:
\begin{equation}\label{eq:cauchy-quadratic}
  x\lambda^2 \;-\; (y + i\delta)\,\lambda \;-\; 1 \;=\; 0.
\end{equation}

This is a \emph{quadratic} in~$\lambda$---a dramatic simplification compared to the Lambert equation of the exponential seed.  The general solution of the Burgers equation for a M\"obius seed is always algebraic.

\subsubsection{Explicit solution}

Applying the quadratic formula with the abbreviation $\zeta := y + i\delta$:
\[
  \lambda \;=\; \frac{\zeta \pm \sqrt{\zeta^2 + 4x}}{2x},
\]
where the discriminant is
\begin{equation}\label{eq:cauchy-disc}
  D \;=\; \zeta^2 + 4x \;=\; (y^2 - \delta^2 + 4x) \;+\; 2iy\delta.
\end{equation}

\noindent\textbf{Branch selection.}  At $x = 0$, the two branches give $\lambda \to +\infty$ and $\lambda \to -1/\zeta = f_\delta(y)$.  The correct branch (continuous from the initial data) is:
\begin{equation}\label{eq:cauchy-lambda}
  \boxed{\;\lambda(x,y) \;=\; \frac{\zeta \;-\; \sqrt{\zeta^2 + 4x}}{2x}\;}
\end{equation}
where $\sqrt{\cdot}$ denotes the principal branch of the square root.

\noindent\textbf{Consistency checks:}
\begin{itemize}
\item At $x = 0$: $\lambda(0,y) = -1/(y + i\delta) = f_\delta(y)$.  $\checkmark$
\item Burgers equation: $|\lambda_x + \lambda\lambda_y| < 10^{-8}$ at all test points.  $\checkmark$
\item Self-dilatation: $|\mu_{\bar z} - \mu\,\mu_z| < 10^{-8}$.  $\checkmark$
\end{itemize}

\subsubsection{The M\"obius identity $f' = \lambda^2$}

The characteristic coordinate satisfies $w + i\delta = -1/\lambda$ (from the implicit equation $\lambda(w + i\delta) = -1$).  Therefore:
\begin{equation}\label{eq:cauchy-fprime-lambda}
  f_\delta'(w) \;=\; \frac{1}{(w + i\delta)^2} \;=\; \lambda^2.
\end{equation}
This is the M\"obius analogue of the exponential's eigenfunction property $f' = f = \lambda$.  The derivative of the output is the \emph{square} of the output, reflecting the fact that $f_\delta$ has degree~$-1$ (one pole).

The Jacobian becomes:
\begin{equation}\label{eq:cauchy-jacobian}
  J \;=\; 1 + f_\delta'(w)\,x \;=\; 1 + \lambda^2 x.
\end{equation}
Using the quadratic relation $x\lambda^2 = \zeta\lambda + 1$, this simplifies to:
\begin{equation}\label{eq:cauchy-jacobian-alt}
  J \;=\; 2 + \zeta\lambda \;=\; \frac{\sqrt{D}\,(\sqrt{D} - \zeta)}{2x},
\end{equation}
where $D = \zeta^2 + 4x$.

\subsubsection{Domain: an isolated shock}

\begin{theorem}[Domain of the Cauchy kernel]\label{thm:cauchy-domain}
The smooth domain of $\B[f_\delta]$ is $\Omega_{f_\delta} = \RR^2 \setminus \{(\delta^2/4,\, 0)\}$.  The spectral parameter~$\lambda$ extends continuously to all of~$\RR^2$, with $\lambda(\delta^2/4, 0) = 2i/\delta$, but $\nabla\lambda$ has a square-root singularity at the shock point.
\end{theorem}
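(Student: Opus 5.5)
The plan is to reduce everything to the Jacobian formula \eqref{eq:cauchy-jacobian-alt} together with Theorem~\ref{thm:domain-characterization}(ii). By that theorem, the boundary of $\Omega_{f_\delta}$ consists of points where $J = 1 + \lambda^2 x = 0$. From the quadratic relation $x\lambda^2 = \zeta\lambda + 1$ we have $J = 2 + \zeta\lambda$. Combining this with the explicit branch \eqref{eq:cauchy-lambda}, the condition $J=0$ becomes
\[
  \sqrt{D}\,(\sqrt{D}-\zeta) = 0 .
\]
The factor $\sqrt D - \zeta = 0$ forces $4x = 0$, which is excluded since $J = 1$ on $x=0$. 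So the only possibility is $D = \zeta^2 + 4x = 0$. By \eqref{eq:cauchy-disc} this splits into $2y\delta = 0$ and $y^2 - \delta^2 + 4x = 0$, i.e.\ $y = 0$ and $x = \delta^2/4$. Equivalently, the quadratic \eqref{eq:cauchy-quadratic} has a double root exactly when its discriminant vanishes, and this is precisely where the implicit function theorem fails. At this point $\lambda = \zeta/(2x) = i\delta/(\delta^2/2) = 2i/\delta$, which lies in $\HH$.

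Next I check that no other boundary mechanism intervenes. For this I need $\Imm\lambda > 0$ on all of $\RR^2\setminus\{(\delta^2/4,0)\}$, and I need the branch \eqref{eq:cauchy-lambda} to be the continuation of the initial data, i.e.\ smooth there.

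\textbf{Smoothness of the branch.} $D$ lies on the negative real axis only when $y=0$ and $4x < \delta^2$; then $\zeta = i\delta$ and $D = 4x - \delta^2 < 0$. So the principal square root is smooth except along the ray $\{y=0,\ x<\delta^2/4\}$. That ray meets the initial line $x=0$, so the naive principal branch has a jump there. I expect to handle this by choosing instead the branch of $\sqrt{D}$ continuous along $\{y=0\}$, namely $\sqrt{D} = i\sqrt{\delta^2-4x}$, and checking $\lambda(0,y)=f_\delta(y)$ still holds. If the paper's ``principal'' convention is meant with cut along the positive axis, this is automatic. I would fix this convention explicitly at the start.

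\textbf{Positivity of $\Imm\lambda$.} The cleaner route is: $\lambda = -1/(w+i\delta)$ with $w = y - \lambda x$. If $\Imm\lambda$ ever reached $0$ along a connected path from the $y$-axis, then $\lambda$ would be real, hence $w$ real, hence $\Imm\lambda = \delta/(w^2+\delta^2) > 0$, a contradiction. Combined with connectedness of $\RR^2$ minus a point and the local smoothness away from $D=0$, this gives $\Omega_{f_\delta} = \RR^2\setminus\{(\delta^2/4,0)\}$.

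\textbf{Continuity and the singularity.} Continuity at the shock point follows from $\sqrt{D}\to 0$, giving $\lambda \to \zeta/(2x) = 2i/\delta$. For the gradient, differentiate \eqref{eq:cauchy-lambda}: $\lambda_x$ and $\lambda_y$ contain $\partial\sqrt D = \partial D/(2\sqrt D)$. Since $D_x = 4$ and $D_y = 2\zeta = 2i\delta \neq 0$ at the point, these terms blow up like $|D|^{-1/2}$, which gives the square-root singularity. Equivalently, $\lambda_y = f'/J = \lambda^2/J$ with $J \sim \sqrt{D}\cdot(\text{nonzero})$.

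I expect the main obstacle to be the branch issue. I must make precise which branch of $\sqrt{D}$ is meant, so that $\lambda$ is genuinely smooth across the segment $\{y=0,\ 0<x<\delta^2/4\}$ and the only failure of uniqueness is the isolated double root.
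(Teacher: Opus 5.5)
Your location of the shock (only $D=0$, i.e.\ $(\delta^2/4,0)$) is fine, and so is your positivity argument ($\lambda$ real would force $w$ real, and then $\Imm\lambda=\delta/(w^2+\delta^2)>0$). The limit $2i/\delta$ and the $|D|^{-1/2}$ blow-up of $\nabla\lambda$ are also correct. Together these are essentially the paper's proof. You are also right that the paper's principal-branch convention is defective: for $y<0$ it even picks the root that blows up as $x\to0$.

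The step that fails is the globalisation ``connectedness of $\RR^2$ minus a point and local smoothness away from $D=0$ give $\Omega_{f_\delta}=\RR^2\setminus\{(\delta^2/4,0)\}$.'' Local solvability plus connectedness does not produce a single-valued solution on a domain that is not simply connected, and here the monodromy is nontrivial. Any solution satisfies $(2x\lambda-\zeta)^2=4x(x\lambda^2-\zeta\lambda)+\zeta^2=4x+\zeta^2=D$. So a continuous solution on a small loop around the shock point would give a continuous square root of $D$ on that loop. But $D=4(x-\delta^2/4)+2i\delta y+y^2$ is a local diffeomorphism at the shock point, so the image loop winds once around $0$, and no such root exists. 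Hence no continuous solution exists on any punctured neighbourhood of $(\delta^2/4,0)$, whatever branch you fix; changing the convention only moves the cut.

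Concretely, your proposed convention (cut along $[0,\infty)$, i.e.\ $\Imm\sqrt D>0$) is the correct continuation of the initial data. However, $D\in(0,\infty)$ exactly on the post-shock ray $\{y=0,\ x>\delta^2/4\}$, and there the limits of $\lambda$ as $y\to0^{\pm}$ are $(i\delta\mp\sqrt{4x-\delta^2})/(2x)$. Both lie in $\HH$: for $x>0$ the two roots have product $-1/x<0$, so both are in $\HH$ together. Positivity therefore cannot choose between them. They are swapped by the symmetry $\lambda(x,-y)=-\overline{\lambda(x,y)}$ of the continued solution. Relying on Theorem~\ref{thm:domain-characterization}(ii) is what hides this: $J\neq0$ only gives local solvability, and the ray bounds every admissible domain even though $J\neq0$ on it.

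So the gap cannot be closed. The statement as written (punctured plane, continuous extension to all of $\RR^2$) is false, and the paper's proof, which only locates the zero of $J$, has the same omission. What your argument does establish is the slit-plane version. Take $\lambda=(\zeta-\sqrt D)/(2x)=-2/(\zeta+\sqrt D)$ with $\Imm\sqrt D>0$. This $\lambda$ is real-analytic with $\Imm\lambda>0$ on $\RR^2\setminus\{(x,0):x\ge\delta^2/4\}$, and it equals $f_\delta(y)$ at $x=0$. It tends to $2i/\delta$ at the shock point from every direction, with $|\nabla\lambda|\sim|D|^{-1/2}$ there, and it has a jump across the ray beyond that point. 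The cut could equally be any curve from the shock point to infinity inside $\{x>0\}$, so there is not even a canonical maximal domain.
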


\begin{proof}
The Jacobian $J = 0$ requires $D = \zeta^2 + 4x = 0$.  Writing $D = (y^2 - \delta^2 + 4x) + 2iy\delta$:
\begin{align*}
  \Imm D = 0 &\implies y = 0, \\
  \Ree D = 0 &\implies 4x = \delta^2.
\end{align*}
Hence $J = 0$ only at the single point $(x,y) = (\delta^2/4, 0)$.

At this point, both branches of the quadratic coalesce to $\lambda = \zeta/(2x) = i\delta/(\delta^2/2) = 2i/\delta$, so $\lambda$ extends continuously.  However, near the shock point with $x = \delta^2/4 + \varepsilon$, $y = \eta$ small:
\[
  D \;\approx\; 4\varepsilon + 2i\eta\delta + \eta^2,
  \qquad
  \sqrt{D} \;\sim\; 2\sqrt{\varepsilon} \quad(\text{along } \eta = 0),
\]
so $|\nabla\lambda| \sim 1/\sqrt{\varepsilon} \to \infty$.  The singularity is of square-root type: $\lambda$ is H\"older-$\tfrac{1}{2}$ but not Lipschitz at the shock.
\end{proof}

\begin{remark}[An isolated shock is a codimension-2 phenomenon]
The $\delta$-family's shock locus is a line ($x = -1$, codimension~1), while the Cauchy kernel's shock is a single point (codimension~2).  The mechanism is that the Jacobian $J = 1 + \lambda^2 x$ is \emph{complex-valued}: the equation $J = 0$ imposes \emph{two} real conditions ($\Ree J = 0$ and $\Imm J = 0$), generically cutting out a discrete set.  For the $\delta$-family, $J = 1 + x$ is real, so $J = 0$ imposes only one condition and the shock locus is a hypersurface.

This suggests a general principle: seeds with nonconstant $f'$ produce complex Jacobians, and the shock locus is generically discrete (isolated points) rather than a curve.
\end{remark}

\subsubsection{Compactification}

The most striking property of the Cauchy kernel is that its structure coefficients \emph{decay} at spatial infinity:
\begin{equation}\label{eq:cauchy-decay}
  \alpha(x,y) \;=\; |\lambda|^2 \;\to\; 0 \quad\text{as } |(x,y)| \to \infty.
\end{equation}

\noindent\textbf{Precise decay rates:}
\begin{enumerate}[label=(\roman*)]
\item \emph{Along the $y$-axis} ($x = 0$): $\alpha = 1/(y^2 + \delta^2) \sim 1/y^2$.
\item \emph{Along the positive $x$-axis} ($y = 0$, $x > \delta^2/4$): $\alpha = 1/x$ \textbf{exactly}.
\item \emph{Along the negative $x$-axis} ($y = 0$, $x < 0$): $\alpha = 4/(\delta + \sqrt{\delta^2 + 4|x|})^2 \sim 1/|x|$.
\end{enumerate}

The exact identity $\alpha|_{y=0} = 1/x$ for $x > \delta^2/4$ has a clean proof: the quadratic gives $\lambda = (i\delta - \sqrt{4x - \delta^2})/(2x)$, so
\[
  |\lambda|^2 \;=\; \frac{\delta^2 + (4x - \delta^2)}{4x^2} \;=\; \frac{4x}{4x^2} \;=\; \frac{1}{x}.
\]

Contrast the $\delta$-family ($\alpha \to \infty$ as $x \to -1$) and the exponential ($\alpha(0,y) = e^{2y} \to \infty$).  The Cauchy kernel is the first example where the elliptic structure vanishes at infinity in all directions---the algebra ``returns to standard'' far from the origin.

\subsubsection{The two regimes along $y = 0$}

Along the symmetry axis $y = 0$, the discriminant $D = -\delta^2 + 4x$ is real, and two qualitative regimes emerge:

\begin{enumerate}[label=(\roman*)]
\item \textbf{Pre-shock} ($0 < x < \delta^2/4$): $D < 0$, so $\sqrt{D}$ is purely imaginary.  The spectral parameter $\lambda = i[\delta - \sqrt{\delta^2 - 4x}]/(2x)$ is \emph{purely imaginary}, and $\beta = 0$: the structure is of $p(x)$-type along this segment.  The norm:
\[
  \alpha(x, 0) \;=\; \frac{4}{\bigl(\delta + \sqrt{\delta^2 - 4x}\bigr)^2}.
\]
This increases monotonically from $\alpha(0,0) = 1/\delta^2$ to $\alpha(\delta^2/4, 0) = 4/\delta^2$.

\item \textbf{Post-shock} ($x > \delta^2/4$): $D > 0$, so $\sqrt{D}$ is real.  The spectral parameter acquires a nonzero real part: $\Ree\lambda = -\sqrt{4x - \delta^2}/(2x) \neq 0$, and $\beta \neq 0$.  The structure leaves the $p(x)$-axis.  The norm $\alpha = 1/x$ decays to zero.
\end{enumerate}

At the shock point $x = \delta^2/4$, the two regimes meet: $\alpha$ is continuous (value $4/\delta^2$) but $\alpha'(x)$ has a cusp ($+\infty$ from the left, $-16/\delta^4$ from the right).

\subsubsection{Structure coefficients}

On the initial slice ($x = 0$), the structure is determined by the seed:
\begin{equation}\label{eq:cauchy-initial}
  \alpha(0,y) \;=\; \frac{1}{y^2 + \delta^2},
  \qquad
  \beta(0,y) \;=\; \frac{2y}{y^2 + \delta^2},
  \qquad
  \Delta(0,y) \;=\; \frac{4\delta^2}{(y^2 + \delta^2)^2}.
\end{equation}

The initial discriminant is the \emph{square} of a Lorentzian, decaying as $1/y^4$.  This rapid decay is the initial manifestation of compactification: the structure is nearly standard far from the origin even before the Burgers flow begins to act.

For general $(x,y)$, the structure coefficients are algebraic functions of $(x,y)$ involving $\sqrt{(y + i\delta)^2 + 4x}$---the square root of a complex-valued expression.

\subsubsection{Beltrami coefficient}

At $x = 0$:
\begin{equation}\label{eq:cauchy-mu-x0}
  \mu(0,y) \;=\; \frac{\lambda - i}{\lambda + i}\bigg|_{x=0}
  \;=\; \frac{-1/(y+i\delta) - i}{-1/(y+i\delta) + i}
  \;=\; -\,\frac{\delta - 1 - iy}{\delta + 1 - iy},
\end{equation}
with modulus:
\begin{equation}\label{eq:cauchy-mu-mod}
  |\mu(0,y)|^2 \;=\; \frac{(\delta - 1)^2 + y^2}{(\delta + 1)^2 + y^2}.
\end{equation}

\noindent\textbf{Behaviour:}
\begin{enumerate}[label=(\roman*)]
\item At the origin: $|\mu(0,0)| = |\delta - 1|/(\delta + 1)$.  The origin is a standard point ($\mu = 0$) precisely when $\delta = 1$.  This corresponds to $f(0) = -1/(i) = i$, which lies at the base point of~$\HH$.
\item As $|y| \to \infty$: $|\mu(0,y)| \to 1$.  The Beltrami coefficient approaches the boundary of the Poincar\'e disk along the $y$-axis.
\item The formula is identical to the $\delta$-family's $|\mu(0,0)|$ but with $y$-dependence: the initial Beltrami modulus interpolates between $|\delta - 1|/(\delta + 1)$ (at $y = 0$) and $1$ (at $|y| = \infty$).
\end{enumerate}

\subsubsection{The propagator and deformed product}

Since $f_\delta'(w) = \lambda^2$, the propagator is:
\begin{equation}\label{eq:cauchy-propagator}
  \mathcal{P}_{f_\delta}[h](x,y) \;=\; \frac{h(w_0)}{1 + \lambda^2 x},
\end{equation}
where $w_0 = -1/\lambda - i\delta$ is the characteristic coordinate.

\noindent\textbf{Self-perturbation} ($h = f_\delta$): Since $f_\delta(w_0) = \lambda$:
\[
  \mathcal{P}_{f_\delta}[f_\delta] \;=\; \frac{\lambda}{1 + \lambda^2 x}.
\]

The deformed product is:
\begin{equation}\label{eq:cauchy-deformed}
  \dot\lambda_1 \;\star_{f_\delta}\; \dot\lambda_2 \;=\; (1 + \lambda^2 x)\,\dot\lambda_1\,\dot\lambda_2.
\end{equation}
The Jacobian $J = 1 + \lambda^2 x$ is \emph{complex} and depends \emph{quadratically} on the output~$\lambda$.  Compare the $\delta$-family ($J = 1 + x$, independent of~$\lambda$) and the exponential ($J = 1 + \lambda x$, linear in~$\lambda$).  The degree of $\lambda$ in $J$ reflects the degree of $f'$: for a seed of the form $f(\xi) = \xi^n$ (or M\"obius of degree~$n$), one expects $J = 1 + c\lambda^{n+1}x$.

\subsubsection{The hierarchy of seeds}

The four worked examples form a natural hierarchy governed by the algebraic relationship between $f'$ and~$\lambda$:

\begin{center}
\renewcommand{\arraystretch}{1.4}
\begin{tabular}{lcccc}
\hline
\textbf{Seed} & $f'(w_0)$ & \textbf{Jacobian} $J$ & \textbf{Shock locus} & \textbf{Domain} \\
\hline
$\xi + i\delta$ (affine) & $1$ & $1 + x$ & line $\{x = -1\}$ & half-plane \\
$ie^\xi$ (exp.) & $\lambda$ & $1 + \lambda x$ & $\varnothing$ & $\RR^2$ \\
$-1/(\xi + i\delta)$ (M\"ob.) & $\lambda^2$ & $1 + \lambda^2 x$ & point $\{(\delta^2/4, 0)\}$ & $\RR^2 \setminus \text{pt}$ \\
$ie^{i\arcsin(\varepsilon\xi/2)}$ & variable & $(1-\varepsilon x)^{-1}$-type & parabola & parabolic region \\
\hline
\end{tabular}
\end{center}

The pattern reveals a dimensional reduction in the shock locus as the Jacobian becomes increasingly complex-valued: real $J$ produces codimension-1 shocks, complex $J$ produces codimension-2 shocks (isolated points), and the exponential's $J$ avoids shocks entirely via the mechanism $|J| \geq 1$ (Theorem~\ref{thm:exp-global}).

\subsubsection{Summary of the Cauchy kernel}

\begin{center}
\renewcommand{\arraystretch}{1.4}
\begin{tabular}{ll}
\hline
\textbf{Quantity} & \textbf{Formula} \\
\hline
Seed & $f_\delta(\xi) = -1/(\xi + i\delta)$ \\
Seed domain & $\CC \setminus \{-i\delta\}$ (pole in lower half-plane) \\
$f_\delta'$ & $\lambda^2$ (M\"obius identity) \\
Implicit equation & quadratic: $x\lambda^2 - \zeta\lambda - 1 = 0$ \\
$\lambda$ & $(\zeta - \sqrt{\zeta^2 + 4x})/(2x)$, \;\; $\zeta = y + i\delta$ \\
Domain & $\RR^2 \setminus \{(\delta^2/4,\, 0)\}$ \\
Shock type & isolated point (codimension 2) \\
Jacobian & $J = 1 + \lambda^2 x = 2 + \zeta\lambda$ \\
Char.\ coord. & $w_0 = -1/\lambda - i\delta$ \\
$\alpha$ at $x = 0$ & $1/(y^2 + \delta^2)$ \quad(Lorentzian, decaying) \\
$\alpha$ on $y = 0$, $x > \delta^2/4$ & $1/x$ \quad(exact) \\
$|\mu|^2$ at $x = 0$ & $((\delta - 1)^2 + y^2)/((\delta + 1)^2 + y^2)$ \\
Standard point & $(0,0)$ when $\delta = 1$ \\
Compactification & $\alpha \to 0$ in all spatial directions \\
Propagator & $h(w_0)/(1 + \lambda^2 x)$ \\
Deformed product & $(1 + \lambda^2 x)\,\dot\lambda_1\,\dot\lambda_2$ \\
\hline
\end{tabular}
\end{center}

\section{Open problems}
\label{sec:open}

\begin{enumerate}[label=\arabic*.]
\item \textbf{Infinitesimal composition.}  Theorem~\ref{thm:twisted-mult} resolves Question~\ref{q:infinitesimal} for the multiplicative structure: no $x$-independent bilinear product exists, and the obstruction is the characteristic Jacobian.  The analogous question for \emph{composition} remains open: does there exist a bilinear operation on $T_f\Hol(U,\HH)$ governing the infinitesimal interaction of $\B[f \circ ({\id} + \varepsilon h)]$?

\item \textbf{Deformed algebra structure.}  The $f$-deformed product $\star_f$ (Corollary~\ref{cor:deformed-algebra}) degenerates at shock formation.  Does the family $\{\star_f\}_{f \in \Hol(U,\HH)}$ carry additional structure---for instance, does it form a bundle of algebras over the space of seeds, and what are its sections?

\item \textbf{Density of rigid structures.}  Is $\{\Bmu[f] : f \in \Hol(U,\HH)\}$ dense in $L^\infty(\Omega, \DD)$?  This is the analogue of the prime number theorem: are the irreducibles (rigid structures) dense among all structures?

\item \textbf{Unique rigidization.}  If $\Phi_1$ and $\Phi_2$ both rigidize the same elliptic structure, must they be related by a rigid automorphism?  This is the analogue of unique factorization.

\item \textbf{Global existence of the transform.}  For which $f \in \Hol(U,\HH)$ is $\Omega_f = \RR^2$?  The $\delta$-family ($f(\xi) = \xi + i\delta$) gives $\Omega_f = \{x > -1\}$; characterise the seeds admitting global rigid structures.

\item \textbf{Moduli of leaves.}  What conformal invariants distinguish the leaves of different holomorphic seeds?  Is there a moduli space of Beltrami leaves with natural geometric structure?

\item \textbf{Composition monoid.}  The composition $f \mapsto f \circ g$ acts on holomorphic seeds.  What is the induced action on the space of rigid structures, and does it have a geometric interpretation in the Poincar\'e disk?

\item \textbf{Higher dimensions.}  Does the Burgers transform generalise to Clifford-algebraic structures in dimensions $n > 2$, where the imaginary unit $i(x)$ takes values in $S^{n-1} \subseteq \operatorname{Im}\,\mathbb{H}$ or higher Clifford algebras?

\item \textbf{Full M\"obius equivariance.}  Proposition~\ref{prop:affine-equivariance} gives a twisted equivariance for the affine subgroup of $\operatorname{Aut}(\HH)$.  Is there a natural geometric action of the full M\"obius group on the space of pairs (seed, domain) that extends this?
\end{enumerate}

\section*{Acknowledgements}

\noindent\textbf{Use of Generative AI Tools.}
Portions of the writing and editing of this manuscript were assisted by generative AI language tools. These tools were used to improve clarity of exposition, organization of material, and language presentation. All mathematical results, statements, proofs, and interpretations were developed, verified, and validated by the author. The author takes full responsibility for the accuracy, originality, and integrity of all content in this work, including any material produced with the assistance of AI tools. No generative AI system is listed as an author of this work.


\end{document}